\let\pa\partial  
\let\eps\varepsilon  
\let \wtilde \widetilde
\newcommand{\N}{{\mathbb N}} 
\newcommand{\R}{{\mathbb R}}
\newcommand{\dist}{\operatorname{dist}}
\newcommand{\dx}{\mathrm{d}x}
\newcommand{\K}{{\mathcal K}}
\newcommand{\A}{{\mathbb A}}
\newcommand{\B}{{\mathcal{B}}}
\newcommand{\dd}{{\mathrm{d}}}
\newcommand{\SO}{{\mathrm{SO}}}
\newcommand{\sym}{\operatorname{sym}}
\newcommand{\skw}{\operatorname{skw}}
\newcommand{\tensor}[1]{\mathbb #1}
\newcommand{\Khp}{\mathcal{K}_{(h)}^+}
\newcommand{\Khm}{\mathcal{K}_{(h)}^-}
\newcommand{\Kh}{\mathcal{K}_{(h)}}
\newcommand{\set}[1]{\mathcal #1}
\newcommand{\projyz}{\mathfrak{p}_{x'}}
\newcommand{\tw}{\mathfrak{t}}
\newcommand{\what}{\widehat}
\newcommand{\vphi}{\varphi}
\newcommand{\axl}{\operatorname{axl}}
\newcommand{\randomspace}{\Xi}
\newcommand{\randomelement}{\rho}
\newcommand{\trandomelement}{{\widetilde\rho}}
\newcommand{\randommeasure}{\mathbb P}
\newcommand{\drandommeasure}{\mathrm{d}\mathbb P(\randomelement)}
\newcommand{\randomalgebra}{\mathscr F}
\newcommand{\wtwoscale}{\xrightharpoonup{2}}
\newcommand{\stwoscale}{\xrightarrow{2}}
\newcommand{\weakly}{\rightharpoonup}
\newcommand{\stochsobolev}{\mathcal W}
\newcommand{\stochsmooth}{\mathcal C^\infty}
\newcommand{\twodomain}{\omega}
\DeclareMathOperator*{\esssup}{ess\,sup}
\DeclarePairedDelimiter\abs{\lvert}{\rvert}
\DeclarePairedDelimiter\norm{\lVert}{\rVert}
\DeclarePairedDelimiter\paren{(}{)}
\definecolor{green2}{RGB}{30,170,35}
\newtheorem{theorem}{Theorem}[section]   
\newtheorem{lemma}[theorem]{Lemma}   
\newtheorem{proposition}[theorem]{Proposition}    
\newtheorem{corollary}[theorem]{Corollary}  
\newtheorem{definition}{Definition}[section]
\newtheorem{remark}[theorem]{Remark}
\begin{document}


\title{Derivation of homogenized Euler--Lagrange equations\newline for von K\'arm\'an rod}

\author{Mario Bukal\thanks{University of Zagreb, Faculty of Electrical Engineering 
and Computing, Unska 3, 10000 Zagreb, Croatia, \newline
E-Mails: mario.bukal@fer.hr, igor.velcic@fer.hr}, 
Matth\"aus Pawelczyk\thanks{Technische Universit\"at Dresden, Fachrichtung Mathematik, 
Institut f\"ur Geometrie, Zellerscher Weg 12--14, 01069 Dresden, Germany, 
E-Mail: matthaeus.pawelczyk@tu-dresden.de}, 
and Igor Vel\v ci\'c\footnotemark[1]}

%
%
%

\maketitle

\begin{abstract}
In this paper we study the effects of simultaneous homogenization and dimension reduction in the context of
convergence of stationary points for thin nonhomogeneous rods under the assumption
of the von K\'arm\'an scaling. Assuming stationarity condition for a sequence of deformations close to 
a rigid body motion, we prove that the corresponding sequences of scaled displacements and twist functions
converge to a limit point, which is the stationary point of the homogenized von K\'arm\'an rod model.
The analogous result holds true for the von K\'arm\'an plate model.
\end{abstract}

\vspace{4mm}
{\noindent\bf Keywords:}
elasticity, homogenization, dimension reduction, convergence of equilibria

\vspace{2mm}
{\noindent\bf 2010 MSC:} 74B20, 74G65, 74E30, 74K10, 74Q05



\section{Introduction} 
Boosted by the rigidity result of Friesecke, James and M\"uller \cite{FJM02}, rigorous derivation 
of various approximate models from three-dimensional nonlinear elasticity theory and its variational justification
has become a prominent research topic in the last decade. In particular, 
based on a refined rigidity result \cite{FJM06}, a whole hierarchy of limiting lower-domensional models 
has been derived by means of $\Gamma$-convergence techniques \cite{Bra02, DMa93}. For the context reasons, 
we only refer to the derivation of nonlinear inextensible rod models \cite{MoMu04, MoMu03}. In all these
models however, the material is assumed to be fixed, i.e.~non-oscillating. 
There is also a vast literature on studying the effects of simultaneous 
homogenization and dimesion reduction in various contexts \cite{BFF00, CoMo04, GuMo06}, 
but for the same reasons, we again focus on rod models. First 
attempts date back to \cite{JuTu89}, where the authors studied a linearized rod model assuming its homogeneity along 
the central line and nonhomogeneous microstructure in the cross section. A systemetic approach combining 
rigidity estimates \cite{FJM06} and the two-scale convergence method \cite{All92} 
was presented in \cite{Neu12} for the model of bending rod under the assumption of periodic structures. 
The same homogenized model has been obtained in
\cite{MaVe15} without periodicity assumptions, while using a $\Gamma$-convergence method tailored to the dimension
reduction in higher-order elasticity models. This method has been previously applied for the derivation of homogenized 
von K\'arm\'an plate \cite{Vel15} and linearized elasticity models \cite{BuVe15}, and in this paper we briefly outline how 
it accomplishes the homogenized von K\'arm\'an rod model (see Section \ref{sec:energy_rep}).  

The main purpose of this paper is to study convergence of stationary points of thin three-dimensional inhomogeneous 
rods in the von K\'arm\'an scaling regime. The above mentioned $\Gamma$-convergence techniques roughly assert that
a compact sequence of minimizers of scaled energies converges (on a subsequence) to a minimizer of the limit
energy. However, due to nonlinearities, these minimizers are typically only global and do not necessary 
satisfy the corresponding Euler--Lagrange equation. Convergence of stationary points of thin elastic rods in the
bending regime has been first studied in \cite{MMS07} on a simplified model of thin 2D strips 
and thenafter extended to the full 3D problem in \cite{MoMu08}. In order to identify the limit equations, 
besides the rigidity estimate, the authors also used compensated compactness and careful 
truncation arguments. Later on, convergence of stationary points of thin elastic rods in higher-order scaling regimes 
(including the von K\'arm\'an scaling) under physical growth conditions for the elastic energy density 
has been established in \cite{DaMo12}. However, in all these models the rod material was assumed to be fixed, i.e.~no oscillations 
of material are present.

In this paper we allow for possibility of oscillating (including random) materials and
study the effects of simultaneous homogenization and dimension reduction in the context of
convergence of stationary points in the von K\'arm\'an rod model. 
Let us denote by $\Omega = (0,L)\times\omega\subset\R^3$ a three-dimensional rod-like canonical domain 
of length $L>0$ and cross-section $\omega\subset \R^2$ having a Lipschitz boundary. The (scaled) energy functional of a rod of thickness $h>0$ 
occupying material domain $\Omega_h = (0,L)\times h\omega$ associated 
to a deformation $y^h : \Omega\to\R^3$ is defined on the canonical domain by
\begin{equation}\label{1.eq:energy}
\mathcal E^h(y^h) = \int_\Omega W^h(x,\nabla_hy^h)\dd x - \int_\Omega f^h\cdot y^h\dd x\,.
\end{equation} 
Above $W^h$ is an elastic energy density describing an addmissible composite material (see Section \ref{sec:gf}),  
$\nabla_hy^h = (\pa_1 y^h\, |\, \frac1h \pa_2y^h \, |\, \frac1h\pa_3y^h)$ 
denotes the scaled gradient of the deformation, and $f^h$ describes an external load. 
It is well known that different scaling regimes with respect to the thickness $h$ 
in the applied load and elastic energy 
lead at the limit to different rod models \cite{FJM06, Sca09}.
In the \emph{von K\'arm\'an scaling} of the rod, which is the subject of the research here,
we assume that the elastic energy of a sequence $(y^h)$ satisfies
\begin{equation}\label{1.eq:vKscaling}
\limsup_{h\downarrow0}\frac{1}{h^4}\int_\Omega W^h(x,\nabla_hy^h)\dd x < \infty\,.
\end{equation}
The forcing term scales as $f^h = h^3f$, where
$f=f_2e_2 + f_3e_3$ with $f_2,f_3\in L^2(0,L)$, meaning that
only normal loads to the mid-fiber of the rod are considered.
One can prove that under this scaling of the forces
the global minimizers satisfy the assumption (\ref{1.eq:vKscaling}).

Under assumption (\ref{1.eq:vKscaling})   on a sequence of deformations $(y^h)$ one can prove, 
based on the theorem of geometric rigidity \cite{FJM02},
that there exist sequences of rotations $(\bar R^h)\subset \SO(3)$ and constants $(c^h)\subset \R^3$,
such that transformed deformations $\hat{y}^h = (\bar{R}^h)^Ty^h - c^h$ converge to the identity deformation 
on $(0,L)$ in the $L^2$-norm, i.e.~$\hat y^h \to x_1e_1$, and moreover, 
$\nabla_h\hat y^h \to I$ in the $L^2$-norm \cite{MoMu04} (cf.~Theorem \ref{thm:rigidity} below).
Furthermore, scaled displacements, defined by 
\begin{align}\label{1.eq:uhvh}
u^h(x_1) &= \int_\omega\frac{\hat y_1^h - x_1}{h^2}\dd x'\,, \qquad
v_i^h(x_1) = \int_\omega \frac{\hat y_i^h}{h}\dd x'\,\ \text{ for }i=2,3\,,
\end{align}
and twist functions
\begin{equation}\label{1.eq:wh}
w^h(x_1) = \frac{1}{\mu(\omega)}\int_\omega\frac{x_2\hat y_3^h - x_3\hat y_2^h}{h^2}\dd x'\,,
\end{equation}
where
$\mu(\omega) = \int_\omega (x_2^2 + x_3^2)\dd x'$, converge (weakly) on a suitably extracted subsequence
to $(u,v_2,v_3,w)\in H^1(0,L)\times H^2(0,L)\times H^2(0,L)\times H^1(0,L)$ 
(see Theorem \ref{thm:comp2} for more details).

The strain sequence $(G^h)$ is implicitly defined through the decomposition of the scaled gradient as
$\nabla_h\hat y^h = R^h(I+h^2G^h)$, where $(R^h)$ denotes the sequence of rotation 
functions constructed in Theorem \ref{thm:rigidity}. Convergence results from Theorems \ref{thm:rigidity} and
\ref{thm:comp2} allow for the representation of the symmetrized strain $\sym G^h$ to the fixed and relaxation part
as follows:
\begin{equation}\label{1.eq:symGh}
\sym G^h = \sym (\imath(m)) + \sym\nabla_h\psi^h + o^h\,,
\end{equation}
where the fixed part comes from 
\begin{equation}\label{1.eq:mdef}
m = \left(\begin{array}{c}
	u' + \frac12\left((v_2')^2 + (v_3')^2\right) - v_2''x_2 - v_3''x_3\\
	-w'x_3 \\
	w'x_2
\end{array}
\right)\,,
\end{equation}
the relaxation sequence $(\psi^h)$ satisfies $(\psi_1^h,h\psi_2^h,h\psi_3^h)\to0$,
$\int_\omega(x_2\psi_3^h - x_3\psi_2^h)\dd x'\to 0$ in the $L^2$-norm and 
$\|\sym \nabla_h\psi^h\|_{L^2(\Omega)} \leq C$, while the rest sequence $(o^h)$ converges to zero in the $L^2$-norm. 
Utilizing the $\Gamma$-convergence method accomplished 
for the bending rod model in \cite{MaVe15}, we can analogously perform the simultaneous 
homogenization and dimension reduction process in the von K\'arm\'an case and obtain that the corresponding homogenized model, 
i.e.~the $\Gamma$-limit of $h^{-4}\mathcal E^h(\hat y^h)$ as $h\downarrow0$, is given by
\begin{equation}\label{1.eq:E0}
\mathcal E^0(u,v_2,v_3,w) = \Kh(m) - \int_0^L(f_2v_2 + f_3v_3)\dd x_1\,,
\end{equation}
where functions $u$, $v_2$, $v_3$ and $w$ are the weak limits of scaled displacements and twist function, respectively, 
and $m$ is given by (\ref{1.eq:mdef}). 
Moreover,
the resulting limit elastic energy density (depending on a given subsequence of the diminishing thickness $(h)$) 
can be calculated according to
\begin{equation}\label{1.eq:Khrep}
\Kh(m) = \lim_{h\downarrow0}\int_{\Omega} Q^{h}(x,\imath(m)  + \sym\nabla_{h}\psi^{h}_m)\dd x\,,
\end{equation}
where $Q^h$ is the quadratic form approximating the energy density $W^h$, and $(\psi^h_m)$ the
corresponding relaxation sequence. Confer Section \ref{sec:energy_rep} for more details.

As we already stressed out, our aim is to study the stationary points of the energy functional $ \mathcal E^h $ rather than just 
global minimizers attainable through the $\Gamma$-convergence techniques. The weak form of the Euler--Lagrange equation of the
functional $\mathcal E^h$, assuming the zero boundary condition
on the zero cross-section $\{0\}\times \omega$, formally reads:
\begin{equation}\label{int.eq:EL1}
\int_\Omega\left(DW^h(x,\nabla_h y^h):\nabla_h\phi - h^3(f_2\phi_2 + f_3\phi_3)\right)\dd x = 0\,,
\end{equation}
for all test functions 
$\phi\in H_\omega^1(\Omega,\R^3) = \{\phi\in H^1(\Omega) \ :\ \phi|_{\{0\}\times\omega} = 0\}$. 
This notion of stationarity is the standard one, but possibly not best suited for the nonlinear elasticity. 
Namely, it is still an open question whether under physical growth assumptions on the energy densities $W^h$, 
global or suitably defined local minimizers of $\mathcal E^h$ satisfy the Euler--Lagrange equation \cite{Ball02}. 
To prevail this issue here we use unphysical assumptions on a linear growth and continuity 
of the stress (cf.~hypothesis H5 below). 
There is an alternative notion of first-order stationarity in elasticity, proposed by Ball in \cite{Ball02}, 
and that concept is compatible with a physical growth condition
which roughly says that the energy blows up if the deformation degenerates. 
While the authors in \cite{DaMo12} 
managed to deal with the alternative stationarity condition and to systematically derive the 
corresponding stationarity conditions for the limit models, our method is not compatible with that mainly because of the possibility of interpentration of the matter and we reside in this setting.

Now we are in position to state the main result of the paper.
\begin{theorem}\label{thm:main}
Let the sequence $(W^h)$ describes an admissible composite material and let $({y}^h)$ 
be a sequence satisfying (\ref{1.eq:vKscaling}). Then the transformed deformations $(\hat{y}^h)$
and sequences of scaled displacements converge (on a subsequence) as follows:
\begin{align*}
\hat y^h&\to x_1e_1\quad\text{in }H^1(\Omega,\R^3)\,,\\
u^h &\rightharpoonup u \text{ weakly in }H^1(0,L)\,,\\
v_i^h &\to v_i \text{ strongly in }H^1(0,L)\,,\text{ and } v_i\in H^2(0,L)\text{ for } i=2,3\,,\\
w^h &\rightharpoonup w \text{ weakly in }H^1(0,L)\,.
\end{align*}
Let $f^h = h^3(f_2e_2 + f_3e_3)$ with $f_2,f_3\in L^2(0,L)$ be an external load and
assume that $(\hat{y}^h)$ are stationary points of 
the energy functional $\mathcal E^h$, i.e.~solve equation (\ref{int.eq:EL1}), 
then $(u,v_2,v_3,w)$ is a stationary point of 
the limit energy functional $\mathcal E^0$.
\end{theorem}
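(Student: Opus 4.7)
The plan is to invoke compactness first and then to pass to the limit in the Euler--Lagrange equation (\ref{int.eq:EL1}). The listed convergences for $\hat y^h$, $u^h$, $v_i^h$ and $w^h$ all follow from the rigidity theorem (Theorem \ref{thm:rigidity}) and the compactness lemma for scaled displacements and twists (Theorem \ref{thm:comp2}); the strong $H^1$ convergence of $v_i^h$ is obtained by combining the weak $H^2$ bound from Theorem \ref{thm:comp2} with the compact embedding $H^2\hookrightarrow H^1$. The substantive work is therefore to pass to the limit in (\ref{int.eq:EL1}).

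The key analytical reduction is linearization of the stress. Frame indifference of $W^h$ together with the decomposition $\nabla_h\hat y^h = R^h(I+h^2 G^h)$ yields $DW^h(x,\nabla_h\hat y^h)=R^h\,DW^h(x,I+h^2 G^h)$, and since $DW^h(x,I)=0$, the continuity and linear-growth hypothesis H5 permits the Taylor expansion
\[
DW^h(x,I+h^2 G^h) = h^2\,\mathbb L^h(x)\,\sym G^h + h^2\,\omega^h,
\]
where $\mathbb L^h(x) := D^2 W^h(x,I)$ and $\omega^h\to 0$ in $L^2(\Omega)$ after a suitable truncation of $G^h$. Dividing (\ref{int.eq:EL1}) by $h^2$ reduces matters to identifying the limit of $\int_\Omega \mathbb L^h\,\sym G^h : (R^h)^\top\nabla_h\phi^h\,\dd x$ for appropriately scaled test functions $\phi^h$.

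The test functions are chosen to mimic the limit von K\'arm\'an structure. Given a smooth variation $(\bar u,\bar v_2,\bar v_3,\bar w)$, let $\bar m$ be defined by (\ref{1.eq:mdef}) with barred quantities, and set
\[
\phi^h = \bar R^h\Big[h^2\bar u\,e_1 + h(\bar v_2 e_2+\bar v_3 e_3) - h^2(\bar v_2' x_2+\bar v_3' x_3)\,e_1 + h^2\bar w(x_2 e_3-x_3 e_2) + h^2\eta^h\Big],
\]
where the corrector $\eta^h$ is chosen, via the two-scale relaxation construction underlying (\ref{1.eq:Khrep}), so that $h^{-2}(R^h)^\top\sym\nabla_h\phi^h$ two-scale approximates $\imath(\bar m)$ plus the minimizer attaining $\Kh(\bar m)$. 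Plugging into the scaled Euler--Lagrange equation, two-scale convergence of $\mathbb L^h\sym G^h$ paired with $\sym\nabla_h\phi^h$ produces, by the duality defining $\Kh$, the polar bilinear form of $\Kh$ evaluated at $(m,\bar m)$ on the left, while the force term collapses to $\int_0^L(f_2\bar v_2+f_3\bar v_3)\,\dd x_1$ via $\phi_i^h/h\to \bar v_i$ in $L^2$. This is precisely the Euler--Lagrange equation of $\mathcal E^0$.

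The principal obstacle is controlling the nonlinear remainder $\omega^h$: since $G^h$ is only bounded in $L^2$, $h^2 G^h$ is not small in $L^\infty$ and one cannot Taylor-expand directly. Following the truncation strategy of Mora--M\"uller and Davoli--Mora, one splits $G^h$ into a \emph{good} part supported on $\{|h^2 G^h|\le\delta\}$, on which continuity of $DW^h$ and uniform bounds on $\mathbb L^h$ yield the linearization, and a \emph{bad} part whose contribution is controlled by the $L^2$-bound on $G^h$ and the linear growth H5. A secondary delicacy is that the relaxation part of $G^h$ in (\ref{1.eq:symGh}) need not a priori be the optimal one attached to $m$; this microscopic optimality must be recovered from the Euler--Lagrange equation itself by first testing with $\phi^h$ corresponding to $\bar m\equiv 0$ but with a genuinely oscillating corrector $\eta^h$. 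Varying $\bar m$ afterwards then yields the macroscopic Euler--Lagrange equation of $\mathcal E^0$.
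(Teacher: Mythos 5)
Your outline gets the high-level architecture right --- compactness, linearization of the stress around the identity, truncation to control the nonlinear remainder --- but it misplaces where the real difficulty lies, and the device you invoke for the crucial step does not apply in the generality of the theorem.

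Theorem~\ref{thm:main} is stated for an arbitrary admissible composite material: no periodicity or stochastic stationarity is assumed on $W^h$, so ``two-scale convergence of $\mathbb L^h\sym G^h$ paired with $\sym\nabla_h\phi^h$'' is not available, and one cannot read off the polar bilinear form of $\Kh$ by two-scale duality. What replaces it is a purely $L^2$ orthogonality mechanism, Lemma~\ref{thm:orth_prop}:
\begin{equation*}
\lim_{h\downarrow0}\int_\Omega \A^h\big(\sym\imath(m) + \sym\nabla_h\psi^h\big) : \sym\nabla_h\vphi^h\,\dd x = 0
\end{equation*}
for every admissible $\vphi^h$ with equi-integrable $|\sym\nabla_h\vphi^h|^2$, where $\psi^h$ is the \emph{actual} relaxation sequence in (\ref{1.eq:symGh}). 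This is proven directly from the Euler--Lagrange equation by feeding in two separate families of test functions obtained from a Griso decomposition of $\vphi^h$ (a warping part and a skew-rotation part), after modifying each near the boundary to make them admissible, and --- crucially --- after Lipschitz-truncating the test sequence (Lemma~\ref{lema:fmp1}, Corollary~\ref{cor:fmp2}) so as to obtain an $L^\infty$ bound on $\nabla_h\phi^h$ that compensates the $|\Omega\setminus S_h^\alpha|^{1/2}\sim h^{1-\alpha/2}$ smallness coming from Chebyshev. That Lipschitz truncation is absent from your proposal; without it your estimate on the nonlinear remainder $\zeta^h(\cdot,h^2G^h)$ over the bad set $\{h^2|G^h|>h^\alpha\}$ would not close, because you only control $\nabla_h\phi^h$ in $L^2$, not $L^\infty$, there.

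You do flag, as a ``secondary delicacy'', that the relaxation part in (\ref{1.eq:symGh}) need not a priori be the optimal corrector $\psi^h_m$ attached to $m$. In the paper this is the \emph{central} difficulty, not a secondary one: Lemma~\ref{thm:orth_prop}, combined with the orthogonality property (\ref{eq:orth_prop_2}) of the optimal corrector from Lemma~\ref{lemma:key} and the coercivity of $Q^h$, is precisely what shows $\|\sym\nabla_h(\psi^h_m - \tilde\psi^h)\|_{L^2}\to 0$ after an equi-integrable replacement $\tilde\psi^h$ of $\psi^h$, and hence that the limit pairing equals $\delta\Kh(m)/\delta m[n]$ as computed in Section~\ref{sec:varder}. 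Your sketch of ``testing with $\phi^h$ corresponding to $\bar m\equiv 0$ with an oscillating corrector'' points roughly in this direction, but it does not yield the statement: one must verify the vanishing for \emph{all} admissible $\vphi^h$ with equi-integrable scaled symmetric gradient, and this forces the Griso decomposition and a second Lipschitz truncation of the skew field $\Phi^h$ (Part~2 of the proof of Lemma~\ref{thm:orth_prop}). As written, your proposal leaves this step as a sketch and would need to be expanded into essentially the full content of Lemma~\ref{thm:orth_prop} before it could be called a proof.
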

Big part of the proof of Theorem \ref{thm:main} (compactness) does not differ much from the non-oscillating case, 
which is already available in the literature. These results are comprised and properly referenced in Theorems \ref{thm:rigidity} and \ref{thm:comp2} below
in Section \ref{sec:prel}.
Hence, the main focus here is on the statement that stationarity of the transformed deformations $\hat y^h$ of the energy
functional $\mathcal E^h$ (in the sense of (\ref{int.eq:EL1})) implies the stationarity of the point
$(u,v_2,v_3,w)$ for the limit energy functional $\mathcal E^0$. The key point in proving that statement is the 
orthogonality property provided in Lemma \ref{thm:orth_prop}, which essentially allows us to identify
two relaxation sequences: $(\psi^h)$ from (\ref{1.eq:symGh}) and $(\psi^h_m)$ from (\ref{1.eq:Khrep}),
up to $L^2$-concentrations, which are irrelevant for identification of weak limits. 
The proof of Lemma \ref{thm:orth_prop}, together with the proof of Theorem \ref{thm:main}, and identification
of limit Euler--Lagrange equations are devised to Section \ref{sec:homEL}, while some technical results can be found
in the appendix. We emphasize at this point that, up to some technical peculiarities, the same approach can be utilized for studying the 
convergence of stationary points of the von K\'arm\'an plate model, and the analogous result holds true.

Finally, in Section \ref{sec:sthom} we consider randomly oscillating materials satisfying the von K\'arm\'an scaling and 
provide an explicit cell formula for the limit energy density of the functional $\Kh$ (cf.~Proposition \ref{prop:intrep}). 
This result also covers the case of periodically and almost periodically oscillating materials.


\section{Preliminaries}\label{sec:prel}

\subsection{Notation}
$\Omega= (0,L)\times\omega\subset\R^3$ is a Lipschitz domain describing the canonical configuration of a rod of
length $L>0$ and shape $\omega\subset\R^2$. Vectors $e_1,e_2,e_3$ denote the canonical basis of $\R^3$ and 
$(x_1,x')\in\R^3$, with $x'=(x_2,x_3)\in\R^2$, denote the coordinates of a 
point in $\R^3$ with respect to that basis. Also, we will frequently use the projection of a point $x\in\R^3$
to $x'$-plane, denoted by $\projyz(x) = (0,x')^T$. 
For a given thickness $h>0$, the scaled gradient is denoted by 
$\nabla_h = (\pa_1,\frac{1}{h}\pa_2,\frac{1}{h}\pa_3)$. The space of real $3\times3$ matrices is denoted by 
$\R^{3\times3}$, while $\R^{3\times3}_{\sym}$, $\R^{3\times3}_{\skw}$ and $\SO(3)$ denote the subspaces
of symmetric, skew-symmetric, and special orthogonal matrices, respectively. 
For a skew-symmetric matrix $A$ we denote its axial vector by $\axl A = (A_{32},A_{13},A_{21})$.
By $\iota : \R^3 \to \R^{3\times3}$ we denote the inclusion $\iota(v) = v \otimes e_1$. 
Depending on the context, by $|\cdot|$ we
denote both the Lebesgue measure of a set and the euclidean norm of a vector in $\R^d$. 
The space of smooth functions on $(0,L)$ which are vanishing at zero will be denoted by $C^\infty_0(0,L)$.
Given two functions
$\phi,\psi\in L^1(\Omega,\R^3)$, we define the {\em twist} function $\tw(\phi,\psi):(0,L)\to\R$ by
\begin{equation*}
\tw(\phi,\psi)(x_1) = \int_\omega(x_2\psi - x_3\phi)\dd x'\,.
\end{equation*}
Finally, the moments of a function $\Psi\in L^1(\Omega,\R^{3\times3})$ are denoted as follows. The zeroth moment
$\overline \Psi : (0,L)\to \R^{3\times3}$ is defined by
\begin{equation}\label{1.def:moment0}
\overline\Psi(x_1) = \int_\omega\Psi(x)\dd x'\,,
\end{equation}
and first-order moments $\wtilde \Psi,\what\Psi : (0,L)\to \R^{3\times3}$ are defined by
\begin{equation}\label{1.def:moments1}
\wtilde\Psi(x_1) = \int_\omega x_2\Psi(x)\dd x'\,,\qquad \what\Psi(x_1) = \int_\omega x_3\Psi(x)\dd x'\,.
\end{equation}

\subsection{von K\'arm\'an rod model -- supplement}\label{sec:gf}
Let $\omega\subset \R^2$ be a Lipschitz domain of the Lebesgue measure $|\omega|=1$ and assume that
coordinate axes are chosen such that
\begin{equation*}
\int_\omega x_2 \dd x' = \int_\omega x_3\dd x' = \int_\omega x_2x_3 \dd x' = 0\,.
\end{equation*}   
By $\Omega^h = (0,L)\times h\omega$ we denote the reference configuration (material domain) 
of a rod-like body of thickness $h >0$ and length $L>0$.
Performing the standard change of variables $\Omega_h \ni \hat{x}\mapsto x\in\Omega$, given by
$x_1 = \hat{x}_1$, $x' = \frac1h\hat x'$, we will in the sequel work on the canonical domain 
$\Omega = (0,L)\times\omega$.
For every $h>0$, the (scaled) energy functional of a deformation $y^h : \Omega\to\R^3$ is given by 
expression (\ref{1.eq:energy}).

For the elastic energy densities $W^h$ we have more or less standard 
hypotheses on nonlinear composite material, which are listed in the sequel.

{\em \noindent Nonlinear material law.} 
Let $\alpha$, $\beta$, $\varrho$ and $\kappa$ be positive constants such that $\alpha \leq \beta$. 
The class $\mathcal{W}(\alpha,\beta,\varrho,\kappa)$ consists of all measurable functions 
$W:\R^{3\times3}\to [0,+\infty]$ satisfying:
\begin{description}
\item[(H1)] frame indifference: $W(RF) = W(F)$ for all $F\in\R^{3\times3}$ and $R\in\SO(3)$;
\item[(H2)] non-degeneracy: 
\begin{align*}
W(F) & \geq \alpha\dist^2(F,\SO(3))\quad\text{for all } F\in\R^{3\times3}\,,\\
W(F) & \leq \beta\dist^2(F,\SO(3))\quad\text{for all } F\in\R^{3\times3}\text{ with }\dist^2(F,\SO(3))\leq\varrho\,;
\end{align*}
\item[(H3)] minimality at identity: $W(I) = 0$;
\item[(H4)] quadratic expansion at identity: $W(I + G) = Q(G) + o(|G|^2)$ as $G\to0$ ($G\in\R^{3\times3}$), 
where $Q:\R^{3\times3}\to \R$ is a quadratic form;
\item[(H5)] linear stress growth: $|DW(F)| \leq \kappa(|F| + 1)$ for all $F\in\R^{3\times3}$. 
\end{description}
{\em\noindent Admissible composite material.}
For $\alpha$, $\beta$, $\varrho$ and $\kappa$ positive constants as above, a family of functions
$W^h :\Omega\times\R^{3\times3}\to [0,+\infty]$ describes an admissible composite material of class 
$\mathcal{W}(\alpha,\beta,\varrho,\kappa)$ if the following hypotheses hold:
\begin{description}
\item[(C1)] for every $h>0$, $W^h$ is almost everywhere equal to a Borel function on $\Omega\times\R^{3\times3}$;
\item[(C2)] for every $h>0$, $W^h(x,\cdot)\in \mathcal{W}(\alpha,\beta,\varrho,\kappa)$ for a.e.~$x\in\Omega$;
\item[(C3)] there exists a monotone function $r : \R_+ \to (0,+\infty)$ such that $r(\delta)\downarrow 0$ 
as $\delta\downarrow0$ and 
\begin{equation}\label{eq:ass}
\forall G\in\R^{3\times3}\,, \ \forall h>0\ :\ \esssup_{x\in\Omega}|W^h(x,I+G) - Q^h(x,G)| \leq r(|G|)|G|^2\,,
\end{equation}
where $Q^h(x,\cdot)$ are quadratic forms defined in (H4).
\end{description}
The given quadratic form $Q^h(x,\cdot)$ can be (uniquely) represented 
by a positive semidefinite linear operator $\A^h(x)$,
\begin{equation*}
Q^h(x, F) = \frac12\A^h(x)F:F\,,\quad \text{for all } F\in \R^{3\times3}\text{ and for a.e.~}x\in\Omega\,.
\end{equation*}
Assuming that $Q^h$ corresponds to an elastic energy density $W^h$
belonging to a family of elastic energy densities describing an admissible composite material of 
class $\mathcal{W}(\alpha,\beta,\varrho,\kappa)$, one can easily prove:
\begin{enumerate}[(a)]
  \item $\alpha|\sym F|^2 \leq Q^h(x,F) = Q^h(x,\sym F) \leq \beta|\sym F|^2$, for all $F\in\R^{3\times3}$;
  \item $|Q^h(x,F_1) - Q^h(x,F_2)|\leq \beta |\sym F_1 - \sym F_2||\sym F_1 + \sym F_2|$, 
  for all $F_1, F_2 \in\R^{3\times3}$.
\end{enumerate}

\subsection{Rigidity and compactness}
Using the theorem of geometric rigidity \cite{FJM02}, the following result has been established in
\cite{MoMu04}.   
\begin{theorem}\label{thm:rigidity}
Let $(y^h)\subset H^1(\Omega,\R^3)$ be a sequence satisfying
\begin{equation*}
\limsup_{h\downarrow0}\frac{1}{h^4}\int_\Omega\dist^2(\nabla_h y^h,\SO(3))\dd x < +\infty\,.
\end{equation*} 
Then there exist: a sequence of maps $(R^h)\subset C^\infty([0,L],\SO(3))$, a sequence of constant 
rotations $(\bar R^h)\subset \SO(3)$ and constants $(c^h)\subset\R^3$ such that the sequence of
deviations from the rigid motion $(\hat y^h)$, defined by $\hat{y}^h = (\bar{R}^h)^Ty^h - c^h$, satisfies
\begin{align}
\|\nabla_h\hat y^h - R^h\|_{L^2(\Omega)} &\leq Ch^2\,,\label{eq:rige1}\\
\|(R^h)'\|_{L^2(0,L)} &\leq Ch\,,\\
\|R^h - I\|_{L^2(0,L)} &\leq Ch\,.
\end{align}
\end{theorem}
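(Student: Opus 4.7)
I would follow the strategy pioneered by Friesecke--James--M\"uller and adapted to rods by Mora--M\"uller: apply the geometric rigidity theorem on a finite family of small pieces of the rod, patch the resulting rotations together, then smooth. First, partition $(0,L)$ into subintervals $I_k^h$ of length of order $h$ and consider the canonical sub-cylinders $D_k^h = I_k^h\times\omega$. Undoing the anisotropic $h$-scaling sends $\nabla_h y^h$ on $D_k^h$ to an ordinary gradient on a physical cube of side $\sim h$, so the rigidity theorem of \cite{FJM02} applies with a constant independent of $k$ and $h$: there exist constant rotations $R_k^h\in\SO(3)$ with
\begin{equation*}
\int_{D_k^h}|\nabla_h y^h - R_k^h|^2\,\dd x \leq C\int_{D_k^h}\dist^2(\nabla_h y^h,\SO(3))\,\dd x .
\end{equation*}
Summing and invoking the von K\'arm\'an bound yields $\|\nabla_h y^h - \tilde R^h\|_{L^2(\Omega)}\leq Ch^2$, where $\tilde R^h(x_1)=R_k^h$ on $I_k^h$.

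Next, applying the rigidity estimate on pairs of adjacent cylinders $D_k^h\cup D_{k+1}^h$ and using the triangle inequality to compare the resulting rotation with $R_k^h$ and $R_{k+1}^h$, I would obtain
\begin{equation*}
|R_{k+1}^h - R_k^h|^2 \leq Ch^{-1}\int_{D_k^h\cup D_{k+1}^h}\dist^2(\nabla_h y^h,\SO(3))\,\dd x ,
\end{equation*}
which sums to $\sum_k |R_{k+1}^h - R_k^h|^2 \leq Ch^3$. A mollification of $\tilde R^h$ on scale $h$ followed by the nearest-point projection onto $\SO(3)$ then produces a smooth $R^h\in C^\infty([0,L],\SO(3))$ still within $L^2$-distance $O(h^2)$ from $\nabla_h y^h$ and satisfying $\|(R^h)'\|_{L^2(0,L)}\leq Ch$; these are precisely the first two claimed estimates.

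For the third estimate I would set $\bar R^h = R^h(0)$. Replacing $y^h$ by $(\bar R^h)^T y^h$ (and calling the new rotations again $R^h$) makes $R^h(0)=I$ while preserving the derivative bound; the one-dimensional Poincar\'e inequality on $(0,L)$ then gives $\|R^h - I\|_{L^2(0,L)}\leq C\|(R^h)'\|_{L^2(0,L)}\leq Ch$. The constants $c^h$ are then chosen so that $\hat y^h$ has vanishing mean, which does not affect the three listed gradient-level estimates.

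The main obstacle is the patching step: converting the piecewise-constant rotations produced by local rigidity into a single smooth rotation field for which both the $L^2$-closeness of order $h^2$ to $\nabla_h y^h$ and the derivative control of order $h$ hold simultaneously. The matching of exponents is tight and depends crucially on the length of the subintervals being of order $h$, which is exactly the scale at which the anisotropic factor in $\nabla_h$ turns the local piece of the rod into an isotropic cube, so that the rigidity constant inherited from the reference cube of unit side transfers without loss.
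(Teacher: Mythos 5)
The paper does not reproduce a proof of this theorem; it is attributed to Mora--M\"uller \cite{MoMu04}, with the geometric rigidity estimate of \cite{FJM02} as the key ingredient. Your proposal reconstructs exactly the Mora--M\"uller rod argument (local rigidity on cylinders of $x_1$-length $\sim h$, giving piecewise-constant rotations $h^2$-close to $\nabla_h y^h$; adjacent-cylinder comparison yielding $\sum_k|R_{k+1}^h-R_k^h|^2\le Ch^3$; mollification on scale $h$ and nearest-point projection onto $\SO(3)$ to get the smooth $R^h$ with $\|(R^h)'\|_{L^2}\le Ch$ and still $h^2$-close; then $\bar R^h=R^h(0)$ and the one-dimensional Poincar\'e estimate for the third bound), and the exponent bookkeeping is correct throughout, so this is the same approach as the cited source.
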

\noindent The sequence of constants $(c^h)$ in the previous theorem can be chosen such that 
\begin{equation*}
\int_\Omega(\hat{y}_1^h - x_1)\dd x = 0\,,\qquad \int_\Omega\hat{y}^h_i\dd x = 0\,
\quad\text{for }i=2,3\,.
\end{equation*}
Next, we introduce the following ansatz for $(\hat y^h)$: 
\begin{equation}\label{def:ansatz}
\arraycolsep=1.4pt\def\arraystretch{1.7}
\begin{array}{l}\displaystyle
\hat{y}_1^h = x_1 + h^2\left(u^h - x_2\frac{R^h_{21}}{h} - x_3\frac{R^h_{31}}{h}\right) 
+ h^2\beta_1^h\,,\\
\hat{y}_i^h = hx_i + hv_i^h + h^2w^h x_i^\bot
+ h^2\beta_i^h\,,\ \text{ for }i=2,3\,,
\end{array}
\end{equation}
where $x^\bot = (0,-x_3,x_2)$, and functions $u^h$, $v_2^h$, $v_3^h$, and $w^h$ are defined in (\ref{1.eq:uhvh}) and
(\ref{1.eq:wh}).
\begin{remark}
Observe that the proposed ansatz is a slight modification of the ansatz for the same sequence $(\hat y^h)$
from \cite[Theorem 2.2 (f)]{MoMu04}.
In lieu of terms $(v_i^h)'$, $i=2,3$, we set $\frac{1}{h}R^h_{i1}$, respectively. This enables us to 
control the full scaled gradient of the corrector sequence $(\beta^h)$ in the $L^2$-norm 
(see Theorem \ref{thm:comp2} below), which is crucial for application of our method in the analysis afterwards.
\end{remark}
\begin{theorem}\label{thm:comp2} Let the assumption and notation of the previous theorem be retained.
For sequences $(u^h)$, $(v_i^h)$, $i=2,3$, and $(w^h)$ defined above, we have the following convergence results 
which hold on a subsequence:
\begin{align*}
u^h &\rightharpoonup u \text{ weakly in }H^1(0,L)\,;\\
v_i^h &\to v_i \text{ strongly in }H^1(0,L)\,,\text{ and } v_i\in H^2(0,L)\text{ for } i=2,3\,;\\
w^h &\rightharpoonup w \text{ weakly in }H^1(0,L)\,.
\end{align*}
Moreover, the sequence of corrector functions $(\beta^h)$ satisfies uniform bounds: 
$\|\beta^h\|_{L^2(\Omega)}\leq Ch$ and 
$\|\nabla_h\beta^h\|_{L^2(\Omega)} \leq C$.
\end{theorem}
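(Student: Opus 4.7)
The plan is to combine the rigidity estimate of Theorem~\ref{thm:rigidity} with the algebraic constraints coming from $R^h\in\SO(3)$ to obtain first the $H^1$-compactness of $(u^h)$, $(v_i^h)$, $(w^h)$, and then, via the modified ansatz (\ref{def:ansatz}), the bounds on the corrector $(\beta^h)$. A preliminary step is to upgrade the rotation estimates: since $R^h$ depends only on $x_1$, the Sobolev embedding $H^1(0,L)\hookrightarrow L^\infty(0,L)$ together with Theorem~\ref{thm:rigidity} gives $\|R^h-I\|_{L^\infty(0,L)}\le Ch$, and then the orthogonality $(R^h)^T R^h=I$ forces the symmetric part of $R^h-I$ to be pointwise of order $|R^h-I|^2$, so that $|1-R^h_{ii}|$ and $|R^h_{ij}+R^h_{ji}|$ are $O(h^2)$ uniformly on $(0,L)$.

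Next, I would establish the compactness of the scaled displacements and the twist. The $H^1(0,L)$-boundedness of $u^h$, $v_i^h$, $w^h$ follows directly from their defining formulas (\ref{1.eq:uhvh})--(\ref{1.eq:wh}) by cross-sectional averaging of $\|\nabla_h\hat y^h-R^h\|_{L^2}\le Ch^2$, using the pointwise bounds above and $\|R^h_{i1}/h\|_{H^1(0,L)}\le C$. For $u^h$ and $w^h$ only weak $H^1$-compactness (Rellich) is required. For $v_i^h$, I would compare with the auxiliary primitive $\tilde v_i^h(x_1):=v_i^h(0)+\int_0^{x_1} R^h_{i1}(s)/h\,\dd s$, which is uniformly $H^2(0,L)$-bounded; the rigidity estimate combined with Jensen's inequality yields $\|(v_i^h)'-(\tilde v_i^h)'\|_{L^2(0,L)}=O(h)$, so $v_i^h-\tilde v_i^h\to 0$ in $H^1$, and Rellich then gives the announced strong $H^1$-convergence with limit $v_i\in H^2(0,L)$.

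The main novelty is the bound on the scaled gradient of the corrector. I would compute $\nabla_h(\hat y^h-h^2\beta^h)$ directly from (\ref{def:ansatz}) and verify entry by entry that it equals $R^h$ up to an $L^2$-error of order $h^2$. The modified ansatz with $R^h_{i1}/h$ in place of $(v_i^h)'$ is essential here: differentiating the explicit part with respect to $x_1$ produces $(R^h_{i1})'/h$, which is a bona fide $L^2$-function by rigidity, whereas with the original ansatz the analogous computation would formally involve $(v_i^h)''$ (only a distribution). The off-diagonal cross-derivatives match up to the skew defect $R^h_{ij}+R^h_{ji}$, the diagonal entries up to $1-R^h_{ii}$, and the $(i,1)$-entries up to an $L^2$-small remainder coming from $R^h_{i1}-h(v_i^h)'$ plus terms proportional to $h^2(w^h)'$; all of these are $O(h^2)$ in $L^2$ by the preliminary step and the boundedness of $(w^h)'$. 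Subtraction then yields $\|\nabla_h\beta^h\|_{L^2(\Omega)}\le C$.

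Finally, for the $L^2$-bound on $\beta^h$ itself I would exploit zero-moment conditions built into the ansatz: averaging (\ref{def:ansatz}) cross-sectionally and comparing with (\ref{1.eq:uhvh})--(\ref{1.eq:wh}) enforces $\int_\omega\beta_j^h\,\dd x'=0$ for $j=1,2,3$ (together with $\int_\omega(x_2\beta_3^h-x_3\beta_2^h)\,\dd x'=0$, superfluous for this bound). The first three give, via Poincaré on each slice $\{x_1\}\times\omega$, the estimate $\|\beta^h(x_1,\cdot)\|_{L^2(\omega)}\le C\|\nabla_{x'}\beta^h(x_1,\cdot)\|_{L^2(\omega)}=Ch\,\|(\nabla_h\beta^h)(x_1,\cdot)\|_{L^2(\omega)}$, and integration in $x_1$ yields $\|\beta^h\|_{L^2(\Omega)}\le Ch$. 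The main obstacle I anticipate is the entry-by-entry verification in the corrector step: matching all nine components of $\nabla_h(\hat y^h-h^2\beta^h)$ with $R^h$ to order $h^2$ in $L^2$ requires the full strength of the preliminary Sobolev-embedding upgrade and of the orthogonality identities, and is precisely what the modified ansatz is engineered to make work.
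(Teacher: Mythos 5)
Your preliminary step (Sobolev embedding to get $\|R^h-I\|_{L^\infty}\le Ch$, then orthogonality of $R^h$ to control $\sym(R^h-I)$ in $L^\infty$ at order $h^2$), your compactness for $u^h,w^h$, and your auxiliary-primitive argument for $v_i^h$ are all sound and amount to a cosmetic reorganization of the paper's treatment via $A^h=\tfrac1h(R^h-I)$. The $L^2$-bound on $\beta^h$ via slice-wise Poincar\'e, \emph{assuming} the gradient bound, is also fine, and you are right that the twist zero-moment condition is not needed for that particular step.

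The gap is in the claim that all off-diagonal cross-derivatives of $\nabla_h(\hat y^h-h^2\beta^h)$ match $R^h$ up to the skew defect $R^h_{ij}+R^h_{ji}$. That is correct for the $(1,2),(2,1),(1,3),(3,1)$ entries, but fails for the $(2,3)$ and $(3,2)$ entries. Computing directly from the ansatz, $(\nabla_h(\hat y^h-h^2\beta^h))_{23}=-hw^h$ and $(\nabla_h(\hat y^h-h^2\beta^h))_{32}=hw^h$, so the discrepancies with $R^h$ are $-hw^h-R^h_{23}$ and $hw^h-R^h_{32}$. Their sum is $-(R^h_{23}+R^h_{32})=O(h^2)$ as you say, but the individual discrepancies are each $\pm h(w^h+R^h_{23}/h)$ up to $O(h^2)$; bounding them requires $\|w^h+R^h_{23}/h\|_{L^2}\le Ch$, i.e.\ a quantitative rate at which the averaged twist $w^h$ tracks the rotation-derived twist. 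That rate is not available from the rigidity estimate and the pointwise orthogonality bounds alone — the argument as written is circular, since this bound is equivalent (given rigidity) to the very claim $\|(\nabla_h\beta^h)_{23}\|_{L^2}\le C$ you are trying to prove.

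The paper resolves exactly this point by a different route: it controls $\sym(\nabla_{x'}\beta^h)_{ij}$ for $i,j\in\{2,3\}$ (which \emph{is} directly of order $h$ by rigidity and the orthogonality bound on $\sym(R^h-I)$), and then invokes a Korn-type inequality on the cross-section for the pair $(\beta_2^h,\beta_3^h)$ in the subspace $\mathcal B$. This is precisely where the second zero-moment condition $\int_\omega(x_2\beta_3^h-x_3\beta_2^h)\,\dd x'=0$, which you set aside as superfluous, becomes essential: it removes the infinitesimal rigid rotation of the cross-section, without which Korn's inequality cannot upgrade the symmetric-part bound to a full-gradient bound. An alternative repair of your entry-by-entry approach would be to prove $\|w^h+R^h_{23}/h\|_{L^2}\le Ch$ directly, which can be done by applying Poincar\'e on each slice $\{x_1\}\times\omega$ to $\hat y_i^h-\int_\omega\hat y_i^h\,\dd x'-hR^h_{i2}x_2-hR^h_{i3}x_3$ (of order $h^3$ in $L^2$ by rigidity), plugging this into the definition of $w^h$, and using $\int_\omega x_2=\int_\omega x_3=\int_\omega x_2 x_3=0$ together with the $O(h^2)$ uniform bound on $R^h_{23}+R^h_{32}$; but some such additional ingredient is indispensable.
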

\begin{proof}The proof follows the lines of the proof of Theorem 2.2 from \cite{MoMu04}, but we include it here
for the reader's convenience.
Let us define
\begin{align*}
A^h := \frac{1}{h}(R^h - I)\,.
\end{align*}
From the previous theorem we have $\|R^h - I\|_{L^2(0,L)} \leq Ch$ and 
$\|(R^h)'\|_{L^2(0,L)} \leq Ch$, which implies the uniform bound $\|A^h\|_{H^1(0,L)}\leq C$. 
Therefore, (up to a subsequence)
$A^h \rightharpoonup A$ weakly in $H^1((0,L),\R^{3\times3})$. From the compactness of 
the Sobolev embedding $H^1((0,L),\R^{3\times3})\hookrightarrow L^\infty((0,L),\R^{3\times3})$,
we conclude $A^h \to A$ strongly in $L^\infty((0,L),\R^{3\times3})$. Direct calculation reveals
the identities
\begin{equation*}
A^h + (A^h)^T = -hA^h(A^h)^T\quad\text{and}\quad \frac{1}{h^2}\sym(R^h - I) = \frac{1}{2h}(A^h + (A^h)^T)\,,
\end{equation*}
which respectively imply $A^T = -A$ and 
\begin{equation}\label{eq.conv.sym}
\frac{1}{h^2}\sym(R^h - I) \to \frac{1}{2}A^2\quad\text{strongly in }L^\infty((0,L),\R^{3\times3})\,.
\end{equation}
Since $\|\nabla_h\hat y^h - R^h\|_{L^2(\Omega)} \leq Ch^2$, using the triangle inequality and established 
convergence results, we conclude
\begin{equation}\label{eq:A_strong}
\frac{1}{h}(\nabla_h\hat y^h - I) \to A \quad\text{strongly in } L^2(\Omega,\R^{3\times3})\,.
\end{equation}
By the construction $\int_0^Lu^h(x_1)\dd x_1 = 0$. Thus, the Poincar\'e and Jensen inequalities together 
with (\ref{eq.conv.sym}) imply
\begin{align*}
\|u^h\|_{L^2(0,L)} & \leq C_P\|(u^h)'\|_{L^2(0,L)}\leq \frac{C_P}{h^2}\|\pa_1\hat y_1^h - 1\|_{L^2(\Omega)} \\
& \leq \frac{C_P}{h^2}\|\pa_1\hat y_1^h - R^h_{11}\|_{L^2} + \frac{C_P}{h^2}\|R^h_{11} - 1\|_{L^2}\leq C\,.
\end{align*}
Therefore, up to a subsequence $u^h \rightharpoonup u$ weakly in $H^1(0,L)$.
Similarly, $\int_0^Lv_i^h(x_1)\dd x_1 = 0$ for $i=2,3$, and 
\begin{equation*}
\|(v_i^h)'\|_{L^2(0,L)}\leq \frac{1}{h}\|\pa_1\hat y_i^h\|_{L^2(\Omega)}\leq C\,.
\end{equation*}
Hence, (up to a subsequence) $v_i^h \rightharpoonup v_i$ weakly in $H^1(0,L)$. Moreover, since
\begin{equation*}
(v_i^h)' = \int_\omega\frac{\pa_1 \hat y_i^h}{h}\dd x' \to A_{i1}
\quad\text{strongly in } L^2(\Omega,\R^{3\times3})\,,
\end{equation*}
one concludes that $A_{i1} = v_i'$ for $i=2,3$. Since $A_{i1}\in H^1(0,L)$, we conclude $v_i\in H^2(0,L)$ 
for $i=2,3$.
Next, we consider the sequence of twist functions $(w^h)$. Note that they can be written as
\begin{align*}
w^h(x_1) &= \frac{1}{\mu(\omega)}\int_\omega x_2\left(\frac{h^{-1}\hat y_3^h - x_3}{h} - 
\frac{1}{h^2} \int_\omega \hat y_3^h \dd x'\right)\dd x'\\
&\qquad - \frac{1}{\mu(\omega)}\int_\omega x_3\left(\frac{h^{-1}\hat y_2^h - x_2}{h} - 
\frac{1}{h^2} \int_\omega \hat y_2^h \dd x'\right)\dd x'\,.
\end{align*}
For the above integrands we have (according to \eqref{eq:A_strong} and the Poincar\'e inequality):
\begin{align*}
\frac{h^{-1}\hat y_3^h - x_3}{h} - \frac{1}{h^2} \int_\omega \hat y_3^h \dd x' &\to A_{32}x_2
\quad\text{strongly in }L^2(\Omega)\,;\\
\frac{h^{-1}\hat y_2^h - x_2}{h} - \frac{1}{h^2} \int_\omega \hat y_2^h \dd x' &\to -A_{32}x_3
\quad\text{strongly in }L^2(\Omega)\,.
\end{align*}
Therefore, $w^h$ converges strongly in the $L^2$-norm to the function $w = A_{32} \in L^2(0,L)$.
Using the a priori estimate $\|\nabla_h\hat y^h - R^h\|_{L^2} \leq Ch^2$ and the 
normality of rotation matrix columns, we conclude the uniform bound $\|(w^h)'\|_{L^2(0,L)} \leq C$. Hence,
$w^h\rightharpoonup w$ weakly in the $H^1$-norm. Observe that the limit matrix 
$A\in H^1((0,L),\R^{3\times3})$ is completely identified
by the limits $u, w\in H^1(0,L)$ and $v_1, v_2\in H^2(0,L)$ in the following way
\begin{equation}\label{eq:A}
 A = \left(
 		\begin{array}{ccc}
 				0    & -v_2' & -v_3' \\
 				v_2' & 0  & -w \\
 				v_3' & w  & 0
 		\end{array}
 	 \right)\,.
\end{equation}

Finally, we consider the sequence of corrector functions $(\beta^h)$ given by:
\begin{align*}
\beta_1^h(x) &= \frac{\hat y_1^h(x) - x_1}{h^2} - u^h(x_1) + x_2\frac{R^h_{21}(x_1)}{h} + 
x_3\frac{R^h_{31}(x_1)}{h} \,;\\
\beta_i^h(x) &= \frac{1}{h}\left(\frac{\hat y_i^h(x) - hx_i}{h} - v_i^h(x_1) - hw^h(x_1)x_i^\bot\right)\,,
\quad i=2,3\,.
\end{align*}
For brevity reasons, let us denote $\pa^h_i = \frac{1}{h}\pa_i$, then for $i=2,3$ we compute 
\begin{align*}
\pa_i\beta_1^h = \frac{1}{h^2}\pa_i\hat{y}_1^h + \frac{R_{i1}^h}{h} = 
\frac{1}{h}\left(\pa^h_i\hat{y}_1^h - R_{1i}^h\right) + \frac{R_{i1}^h + R_{1i}^h}{h}\,.
\end{align*}
The first term on the right-hand side is bounded in the $L^2$-norm due to $\|\nabla_h\hat y^h - R^h\|_{L^2(\Omega)} \leq Ch^2$,
and the second one due to (\ref{eq.conv.sym}). Thus, $\|\pa_i\beta_1^h\|_{L^2(\Omega)}\leq Ch$ for $i=2,3$.
Since $\int_\omega\beta_1^h(x)\dd x' = 0$, using the Poincar\'e inequality we conclude 
\begin{equation*}
\|\beta_1^h(x_1,\cdot)\|_{L^2(\omega)}^2 \leq C(\|\pa_2\beta_1^h(x_1,\cdot)\|_{L^2(\omega)}^2 
+ \|\pa_3\beta_1^h(x_1,\cdot)\|_{L^2(\omega)}^2)\,.
\end{equation*}
Integrating the latter inequality along $x_1$-direction yields the $L^2(\Omega)$-bound 
on $\beta_1^h$ of order $O(h)$.
The identity
\begin{equation*}
\pa_1\beta_1^h = \frac{\pa_1\hat y_1^h - 1}{h^2} - (u^h)' + x_2\frac{(R^h_{21})'}{h} + 
x_3\frac{(R^h_{31})'}{h}\,, 
\end{equation*}
directly implies the uniform bound $\|\pa_1\beta_1^h\|_{L^2(\Omega)}\leq C$.
Straightforward calculations reveal 
\begin{align*}
\pa_j\beta_i^h = \frac{1}{h}\left(\pa_j^h\hat y_i^h - \delta_{ij} - (-1)^j(1-\delta_{ij})hw^h\right)\,,
\quad\text{for }i,j=2,3\,,
\end{align*}
where we have used $\pa_jx_i^\bot = (-1)^j(1-\delta_{ij})$. Furthermore,
\begin{align*}
(\sym\nabla\beta^h)_{ij} = \frac{\pa_j\beta_i^h + \pa_i\beta_j^h}{2} = 
\frac{1}{h}\left(\sym(\nabla_h\hat{y}^h - I)\right)_{ij}\,,\quad\text{for }i,j=2,3\,,
\end{align*}
which implies the uniform bound $\|(\sym\nabla\beta^h)_{ij}\|_{L^2(\Omega)} \leq Ch$ for $i,j=2,3$.
Note that for a.e.~$x_1\in (0,L)$ the function $(\beta_2^h(x_1,\cdot),\beta_3^h(x_1,\cdot))$ belongs to the 
closed subspace
\begin{equation*}
\B = \left\{\alpha\in H^1(\omega,\R^2)\ :\ \int_\omega\alpha(x')\dd x' = 0\,, 
\int_\omega(x_3\alpha_2 - x_2\alpha_3)\dd x' = 0\right\}\,,
\end{equation*}
on which a Korn type inequality \cite{OSY92} holds 
\begin{equation*}
\|\beta_2^h(x_1,\cdot)\|_{H^1(\omega)}^2 + \|\beta_3^h(x_1,\cdot)\|_{H^1(\omega)}^2 
\leq C\sum_{i,j=2,3}\|(\sym\nabla\beta^h(x_1,\cdot))_{ij}\|_{L^2(\omega)}^2\,.
\end{equation*}
Integrating the latter with respect to $x_1$, yields the respective uniform $H^1(\Omega)$-bound.
Hence, we proved $\|\beta^h\|_{L^2(\Omega)}\leq Ch$.
Finally,
\begin{align*}
\pa_1\beta_i^h &= \frac{1}{h}\left(\frac{\pa_1\hat y_i^h}{h} - (v_i^h)' - h(w^h)' x_i^\bot\right)\\
& = \frac{1}{h}\left(\frac{\pa_1\hat y_i^h - R^h_{1i}}{h} - 
\frac{1}{h}\int_\omega(\pa_1\hat y_i^h - R^h_{1i})\dd x'
 - h(w^h)' x_i^\bot\right)
\,,\quad\text{for }i,j=2,3\,,
\end{align*}
and the previously established convergence results imply $\|\pa_1\beta_i^h\|_{L^2(\Omega)}\leq C$.
Thus, we have proved $\|\nabla_h\beta^h\|_{L^2(\Omega)} \leq C$.
\end{proof}

\subsection{Strain and stress estimates}
For every $h>0$, using the rotation matrix function $R^h$, the strain tensor $G^h$ is implicitly defined 
through the following decomposition of the scaled deformation gradient
\begin{equation*}
\nabla_h\hat y^h = R^h(I + h^2G^h)\,.
\end{equation*} 
The explicit identity $G^h = h^{-2}(R^h)^T(\nabla_h\hat y^h - R^h)$ directly implies with (\ref{eq:rige1}) the $L^2$-uniform bound
on the sequence $(G^h)$. Hence, there exists $G\in L^2(\Omega,\R^{3\times3})$ such that $G^h\rightharpoonup G$ on a
subsequence. Our aim is to describe the symmetrized strain $\sym G^h$ in more detail. 
First, we explicitly involve the limit functions $u, w\in H^1(0,L)$ and $v_1, v_2\in H^2(0,L)$ 
into our ansatz (\ref{def:ansatz}) in the following way:
\begin{align*}
\frac{\hat y_1^h - x_1}{h^2} &= u - x_2v_2' - x_3v_3' + \psi_1^h\,,\\
\frac{\hat y_i^h - hx_i}{h^2} &= \frac{v_i}{h} + wx_i^\bot + \psi_i^h\,,\quad \text{for }i=2,3\,,
\end{align*}
where
\begin{align*}
\psi_1^h &= u^h - u - x_2\Big(\frac{R_{21}^h}{h} - v_2'\Big) 
- x_3\Big(\frac{R_{31}^h}{h} - v_3'\Big) + \beta_1^h\,,\\
\psi_i^h &= \frac{1}{h}(v_i^h - v_i) + (w^h - w)x_i^\bot + \beta_i^h\,,\quad \text{for }i=2,3\,.
\end{align*}
Previously established convergence results imply that $(\psi_1^h,h\psi_2^h,h\psi_3^h)\to 0$ 
strongly in the $L^2$-norm. 
Moreover, the derivatives are given by
\begin{align*}
\pa_1\psi_1^h &= (u^h)' - u' - x_2\Big(\frac{(R_{21}^h)'}{h} - v_2''\Big) 
- x_3\Big(\frac{(R_{31}^h)'}{h} - v_3''\Big) + \pa_1\beta_1^h\,,\\
\pa_j^h\psi_1^h &= \frac{v_j'}{h} - \frac{R_{j1}^h}{h^2} + \pa_j^h\beta_1^h\,,\quad\text{for }j=2,3\,,\\
\pa_j^h\psi_i^h &= \frac{(-1)^j}{h}(1-\delta_{ij})(w^h - w) + \pa_j^h\beta_i^h\,,\quad\text{for }i,j=2,3\,,\\
\pa_1\psi_i^h &= \frac{1}{h}\left((v_i^h)' - v_i'\right) + \left((w^h)' - w'\right)x_i^\bot 
+ \pa_1\beta_i^h\,,\quad \text{for }i=2,3\,,
\end{align*}
which together with known convergence results immediately give $\|\sym\nabla_h\psi^h\|_{L^2(\Omega)}\leq C$. 
Invoking (\ref{eq:A}), we obtain the following representation:
\begin{equation}\label{eq:f+r2}
\frac{1}{h^2}\sym\left(\nabla_h\hat y^h - I\right) = u'e_1\otimes e_1 + \sym(\imath(A'\projyz))
+ \sym\nabla_h\psi^h\,,
\end{equation}
Additionally, 
using $(\beta_2^h(x_1,\cdot),\beta_3^h(x_1,\cdot))\in \B$ for a.e.~$x_1\in (0,L)$, one can easily 
check that 
\[
\int_\omega(x_3\psi_2^h - x_2\psi_3^h)\dd x' = -(w^h - w)\int_\omega(x_2^2 + x_3^2)\dd x' \to 0
\quad\text{strongly in }L^2\,.
\]
Next, we compute the symmetrized strain using decomposition (\ref{eq:f+r2}):
\begin{align*}
\sym G^h & = \frac{1}{h^2}\sym\paren*{ (R^h)^T\nabla_h\hat y^h - I}
= \frac{1}{h^2}\sym ((R^h - I)^T\nabla_h\hat y^h) + \frac{1}{h^2}\sym(\nabla_h\hat y^h - I)\\
& = \frac{1}{h^2}\sym((R^h - I)^T(\nabla_h\hat y^h - R^h)) - \frac{1}{h^2} \sym (R^h - I)
+ \frac{1}{h^2}\sym (\nabla_h\hat y^h - I)\\
& := \tilde o^h - \frac{1}{h^2} \sym (R^h - I) + u'e_1\otimes e_1 + \sym(\imath(A'\projyz)) +
	\sym\nabla_h\psi^h\\
& = u'e_1\otimes e_1 + \sym(\imath(A'\projyz)) - \frac{1}{2}A^2 +
	\sym\nabla_h\psi^h + o^h\\
& =: \sym H + \sym\nabla_h\psi^h + o^h\,,
\end{align*} 
where $o^h \to 0$ strongly in $L^2(\Omega,\R^{3\times3})$, 
and $\sym H = u'e_1\otimes e_1 + \sym(\imath(A'\projyz)) - \frac12A^2$.	In this way we decomposed $\sym G^h$ into 
a fixed and relaxation part. A part of $\sym H$ can be further transferred to the relaxation 
terms as follows
\begin{align*}
\sym H &= \Big(u' + \frac12\left((v_2')^2 + (v_3')^2\Big)\right)e_1\otimes e_1 + \sym(\imath(A'\projyz)) \\
 &\qquad + \frac12\left(
 \begin{array}{ccc}
 0 & v_3'w & -v_2'w \\
 v_3'w & w^2 + (v_2')^2 & v_2'v_3'\\
 -v_2'w & v_2'v_3' & w^2 + (v_3')^2
 \end{array}
 \right)\\
 &=: \sym (\imath(m)) + \sym \nabla_h\alpha^h - \sym \imath(\pa_1\alpha^h)\,,
\end{align*}
where
\begin{equation}\label{def:m}
m = \Big(u' + \frac12((v_2')^2 + (v_3')^2)\Big)e_1 + A'\projyz\,,
\end{equation}
and
\begin{equation*}
\alpha^h(x) = h\left(\begin{array}{c}
x_2v_3'w - x_3v_2'w \\ 
\frac12x_2(w^2 + (v_2')^2) + \frac12x_3v_2'v_3'\\
\frac12x_2v_2'v_3' + \frac12x_3(w^2 + (v_3')^2)
\end{array}\right)\,.
\end{equation*}
Finally, we have decomposition
\begin{equation}\label{eq:symGh}
\sym G^h = \sym (\imath(m)) + \sym\nabla_h\psi^h + o^h\,,
\end{equation}
with updated relaxation $\psi^h$ and $L^2$--zero convergent part $o^h$.

The stress field $E^h : \Omega \to \R^{3\times3}$ is defined by 
\begin{equation*}
E^h:=\frac{1}{h^2}DW^h(\cdot,I + h^2G^h)\,.
\end{equation*}
Using the assumption (C3) on $W^h$, in particular
estimate (\ref{eq:ass}), implies that $W^h$ is differentiable a.e.~in $x\in \Omega$ and 
\begin{equation}\label{eq:bound.DW}
\forall\, G\in\R^{3\times3}\,, \ \forall\, h>0\ :\ \esssup_{x\in\Omega}|DW^h(x,I+G) - \A^h(x)G| \leq r(|G|)|G|\,,
\end{equation}
and therefore,  
\begin{equation*}
|DW^h(\cdot, I + h^2G^h)| \leq r(h^2|G^h|)h^2|G^h| + \beta h^2|G^h|\quad\text{a.e.~in }\Omega\,.
\end{equation*}
Let us denote the set
\begin{equation*}
B_h := \{x\in \Omega\ : \ h^2|G^h(x)|\leq 1\}\,,
\end{equation*}
then from the previous inequality
\begin{equation*}
|DW^h(\cdot, I + h^2G^h)| \leq Ch^2|G^h|\quad\text{pointwise in }B_h\,,
\end{equation*}
which yields
\begin{equation*}
|E^h|\leq C|G^h|\quad\text{pointwise in }B_h\,.
\end{equation*}
On the other hand on $\Omega\backslash B_h$, i.e.~on the set where 
$|G^h| > h^{-2}$ a.e., applying hypothesis (H5) we conclude
\begin{equation*}
|E^h| \leq \frac{\kappa}{h^2}\left(|I + h^2G^h| + 1 \right) 
\leq \frac{\kappa}{h^2} \left(h^2|G^h| + \sqrt{3} + 1\right)
\leq C|G^h|\quad\text{pointwise in }\Omega\backslash B_h\,.
\end{equation*}
Therefore, we have a uniform estimate on the whole set,
\begin{equation}\label{eq:Eh_pb}
|E^h| \leq C|G^h|\,
\quad\text{pointwise in }\Omega\,,
\end{equation}
which together with the uniform $L^2$-bound for the strain sequence $(G^h)$ implies the uniform
$L^2$-bound on $(E^h)$ and consequently the weak convergence (on a subsequence)
\begin{equation}
E^h \rightharpoonup E\quad\text{ in }L^2(\Omega,\R^{3\times3})\,.
\end{equation}

\subsection{Representation of elastic energy functionals}\label{sec:energy_rep}
In this subsection we briefly recall a variational approach for general (non-periodic) simultaneous homogenization 
and dimension reduction in the framework of three-dimensional nonlinear elasticity theory. 
This approach has been 
thoroughly undertaken in case of von K\'arm\'an plate \cite{Vel15} and bending rod \cite{MaVe15}, while 
the linear plate model has been outlined in \cite{BuVe15}. The theorem on geometric rigidity 
provides a decomposition of the symmetrized strain to a sum of a fixed and relaxation part (cf.~previous section). 
Utilizing the corresponding Griso's decomposition \cite{Gri05,Grisodec08} gives a further characterization 
of the relaxation part, which enables an operational representation of the elastic energies 
(cf.~Lemma \ref{lemma:key} below), suitable for the 
application of appropriate $\Gamma$-convergence techniques to eventually identify the limiting elastic energy. 
  
In the following we only provide basic steps of the method and state the final results. To start with,
let us define so called lower and upper $\Gamma$-limits. For a monotonically decreasing zero sequence of positive
numbers $(h)\subset (0,+\infty)$, $m\in L^2(\Omega,\R^3)$ and an open set $O\subset (0,L)$, we define: 
\begin{align*}
\Khm(m, O) &= \inf\Big\{\liminf_{h\downarrow0}\int_{O\times \omega}
		Q^{h}(x,\sym\imath(m) + \sym\nabla_{h}\psi^{h})\dd x\ | \\
 		& \qquad\ ~\quad(\psi_1^{h},h\psi_2^{h},{h}\psi_3^{h}) \to 0 
 		\text{ in }L^2(O\times \omega,\R^3)\,,\ \tw(\psi_2^{h},\psi_3^{h}) \to 0
 		\text{ in }L^2(O)\Big\}\,;\nonumber
\end{align*}
\begin{align*}
\Khp(m, O) &= \inf\Big\{\limsup_{h\downarrow0}\int_{O\times \omega} 
		Q^{h}(x,\sym\imath(m) + \sym\nabla_{h}\psi^{h})\dd x\ | \\
 		& \qquad\ ~\quad(\psi_1^{h},h\psi_2^{h},{h}\psi_3^{h}) \to 0 
 		\text{ in }L^2(O\times \omega,\R^3)\,,\ \tw(\psi_2^{h},\psi_3^{h}) \to 0 
 		\text{ in }L^2(O)\Big\}\,.\nonumber
\end{align*}
\noindent The above infimization is taken over all sequences 
$({\psi}^h)\subset H^1(O\times\omega,\R^3)$ such that\\
$(\psi_1^{h},h\psi_2^{h},{h}\psi_3^{h}) \to 0$ and twist functions $\tw(\psi_2^{h}, \psi_3^{h})\to 0$ 
strongly in the $L^2$-topology as $h\to0$.
The identical proof to the one presented for Lemma 3.4 in \cite{Vel15} gives the continuity of $\Khm$ and $\Khp$ with
respect to the first variable. Utilizing the diagonal procedure yields the equality of $\Khm$ and $\Khp$
for a subsequence, still denoted by $(h)$, on $L^2(\Omega,\R^3)\times \mathcal O$, where $\mathcal O$ denotes a 
countable family of open subsets of $(0,L)$. This asserts the definition of the functional
\begin{equation}\label{2.def:Kh_O}
\Kh(m,O) := \Khm(m,O) = \Khp(m,O)\,, 
\quad \forall\, m\in L^2(\Omega,\R^3)\,,\ 
\ \forall\, O\in \mathcal O\,.
\end{equation}
Adopting the strategy developed in \cite[cf.~Lemma 2.10]{MaVe15} and \cite[cf.~Lemma 3.8]{Vel15} one can prove
the following key lemma.
\begin{lemma}\label{lemma:key}
Let $(h)\subset (0,+\infty)$, $h\downarrow0$, be a sequence of positive numbers which satisfies
(\ref{2.def:Kh_O}) for every open set $O\subset (0,L)$.
Then there exists a subsequence, still denoted by $(h)$, which satisfies that for every 
$m\in L^2(\Omega,\R^3)$ there exists $({\psi}^{h})\subset H^1(\Omega,\R^3)$ 
such that for every open subset $O\subset (0,L)$, we have
\begin{equation}\label{2:eq.minseq}
\Kh(m, O) = \lim_{h\downarrow0}\int_{O\times \omega} 
						Q^{h}(x,\sym \imath(m)  + \sym\nabla_{h}\psi^{h})\dd x\,,
\end{equation}
and the following properties hold:
\begin{enumerate}[(a)]
  \item $(\psi^{h}_1,h\psi^{h}_2,h\psi_3^{h})\to 0$ and $\tw(\psi_2^{h}, \psi_3^{h})\to 0$ 
  		strongly in the $L^2$-norm as $h\downarrow0$.
  \item The sequence $(|\sym\nabla_{h}{\psi}^{h}|^2)$ is equi-integrable and 
  		there exist sequences\\ $(\Psi^{h})\subset H^1((0,L),\R^{3\times3}_{\skw})$ and
  		$(\vartheta^{h})\subset H^1(\Omega,\R^3)$ satisfying: $\Psi^{h}\to 0$, $\vartheta^{h}\to0$ strongly in 
  		the $L^2$-norm, and
  		\begin{equation*}
  			\sym \nabla_{h}{\psi}^{h} = \sym\imath((\Psi^{h})'\projyz)
  				+ \sym\nabla_{h}\vartheta^{h}\,.
  		\end{equation*}
  		Moreover, $(|(\Psi^{h})'|^2)$ and $(|\nabla_{h}\vartheta^{h}|^2)$
  		are equi-integrable and the following inequality holds
  		\begin{align*}
  		\limsup_{h\downarrow0}\left(\|\Psi^{h}\|_{H^1(O)} + \|\nabla_{h}\vartheta^{h}\|_{L^2(O\times\omega)}
  		\right) \leq C(\beta\|m\|_{L^2(O\times\omega)}^2 + 1)\,, 
  		\end{align*}
 for some $C>0$ independent of $O\subset (0,L)$.
 \item \emph{(orthogonality)} If $(\vphi^{h})\subset H^1(\Omega,\R^3)$ is any other sequence that satisfies (a) 
  		and $(\sym\nabla_{h}\vphi^{h})$ is bounded in $L^2(\Omega,\R^{3\times3})$, then
  		\begin{equation}\label{eq:orth_prop_2}
  			\lim_{h\downarrow0}\int_\Omega\A^{h}(\sym\imath(m) + \sym\nabla_{h}\psi^{h})
  			:\sym\nabla_{h}\vphi^{h} \dd x = 0\,.
  		\end{equation}
  \item \emph{(uniqueness)} If $(\vphi^{h})\subset H^1(\Omega,\R^3)$ is any other sequence that satisfies 
  		(\ref{2:eq.minseq}) and (a), then
  		\begin{equation*}
  			\|\sym \nabla_{h}\psi^{h} - \sym\nabla_{h}\vphi^{h} \|_{L^2(\Omega)} \to 0\,,
  		\end{equation*}
  		and $(|\nabla_{h}\vphi^{h}|^2)$ is equi-integrable.
\end{enumerate}
\end{lemma}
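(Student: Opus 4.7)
The plan follows the scheme of Lemma 2.10 in \cite{MaVe15} and Lemma 3.8 in \cite{Vel15}, combining three ingredients: a diagonal subsequence extraction to unify the almost-minimizing families, a Fonseca--Müller-type truncation for equi-integrability, and a scaled Griso decomposition to expose the rod structure of the relaxation. Starting from (\ref{2.def:Kh_O}), I would use the continuity of $\Khm(\cdot,O)$ and $\Khp(\cdot,O)$ on $L^2(\Omega,\R^3)$ (proved as in \cite[Lemma 3.4]{Vel15}) and run a Cantor diagonal procedure over a countable dense set of $m$'s and the countable family $\mathcal O$. The outcome is a single subsequence of $(h)$ along which, for every $m$, some admissible $(\psi^h)$ satisfies (a) and realizes (\ref{2:eq.minseq}) on every $O\in\mathcal O$, and then by continuity on every open $O\subset(0,L)$.

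For (b), I would apply a Fonseca--Müller truncation adapted to the scaled gradient $\nabla_h$ to replace $(\psi^h)$ by a modification coinciding with $\psi^h$ outside sets of vanishing measure and having equi-integrable $|\sym\nabla_h\psi^h|^2$; the truncation must be followed by an infinitesimal rigid-motion correction to restore $\tw(\psi_2^h,\psi_3^h)\to 0$. Because $Q^h$ has quadratic growth and the modification sets shrink, the identity (\ref{2:eq.minseq}) survives. Then the scaled Griso decomposition \cite{Gri05,Grisodec08} yields $\Psi^h$ (the macroscopic cross-sectional rotation) and $\vartheta^h$ (the warping) as bounded linear projections of $\sym\nabla_h\psi^h$; they inherit equi-integrability, and the displayed norm estimate follows from cross-sectional Korn inequalities together with the coercivity bound $\|\sym\nabla_h\psi^h\|_{L^2(O\times\omega)}^2\le C(\beta\|m\|_{L^2(O\times\omega)}^2+1)$ coming from $\Kh(m,O)\le\beta\|m\|_{L^2}^2$.

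Property (c) is the first-order condition for minimality: since the constraints in (a) are linear in $\psi^h$, the competitor $\psi^h+t\vphi^h$ is admissible for every $t\in\R$, so
\[\Kh(m,O)\le\limsup_{h\downarrow0}\int_{O\times\omega}Q^h(x,\sym\imath(m)+\sym\nabla_h(\psi^h+t\vphi^h))\dd x\,.\]
Expanding the quadratic form and passing to a further subsequence along which the three resulting limits exist, the inequality $\Kh(m,O)\le\Kh(m,O)+tL+t^2M$ for every $t\in\R$ forces $L=0$, which is exactly (\ref{eq:orth_prop_2}). For (d), the parallelogram identity for $Q^h$ applied to $A=\sym\imath(m)+\sym\nabla_h\psi^h$ and $B=\sym\imath(m)+\sym\nabla_h\vphi^h$ gives, after integration and since the half-sum $\tfrac12(\psi^h+\vphi^h)$ is itself admissible, that the contribution of $\sym\nabla_h(\psi^h-\vphi^h)$ must vanish in $L^2$ in the limit. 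Equi-integrability of $|\nabla_h\vphi^h|^2$ then follows by applying the Griso step to $\vphi^h$ and using that the established $L^2$ control of the symmetrized difference transfers across the decomposition.

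The hardest part, in my view, is the truncation step underlying (b). The mixed-scale constraint $(\psi_1^h,h\psi_2^h,h\psi_3^h)\to 0$ and the integral twist constraint $\tw(\psi_2^h,\psi_3^h)\to 0$ are not automatically preserved by standard Lipschitz truncation on the rescaled domain, so the rod-adapted truncation of \cite{MaVe15} must be supplemented by a compensating rigid-motion correction whose smallness, compatibility with the quadratic energy limit in (\ref{2:eq.minseq}), and compatibility with the subsequent Griso splitting all require careful verification; this is the technical heart of the argument, whereas the orthogonality and uniqueness parts reduce cleanly to convexity of $Q^h$ once (b) is in hand.
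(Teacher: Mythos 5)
Your proposal follows the same route the paper points to: the paper does not write out the proof but explicitly refers to \cite[Lemma 2.10]{MaVe15} and \cite[Lemma 3.8]{Vel15}, whose strategy is precisely the diagonalization over a countable dense set, a Fonseca--M\"uller--Pedregal-type truncation adapted to scaled gradients (cf.\ Lemma~\ref{lema:fmp1} and Corollary~\ref{cor:fmp2} in the appendix) combined with Griso's decomposition for (b), the first-variation argument for (c), and the parallelogram/convexity argument for (d). Your flagging of the need to correct for the twist and cross-sectional mean constraints after truncation, and to verify that the energy limit (\ref{2:eq.minseq}) survives the modification, correctly identifies the genuinely delicate technical step; the argument is otherwise a faithful reproduction of the cited scheme.
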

\noindent An important feature of the method is the localization property of the relaxation sequence,
i.e.~if we know the relaxation sequence for the interval $(0,L)$, the relaxation sequence for an arbitrary open
subset $O\subset (0,L)$ and fixed $m\in L^2(\Omega,\R^3)$, is simply obtained by restriction. 

Finally, we provide the integral representation of the functional $\Kh$ 
(cf.~\cite[Proposition 2.12]{MaVe15}). Recall from (\ref{def:m}) that
$m$ is of the form $m = (u' + \frac12((v_2')^2 + (v_3')^2)e_1 + A'\projyz$. Therefore, we consider the mapping
$m: L^2(0,L) \times L^2((0,L),\R_{\skw}^{3\times3}) \to L^2(\Omega,\R^3)$ defined by 
$m(\varrho,\Psi) = \varrho e_1 + \Psi \projyz$.
\begin{proposition}\label{prop:intrep}
Let $(h)\subset (0,+\infty)$ be a sequence monotonically decreasing to zero. Then there exists a subsequence
still denoted by $(h)$ and a measurable function $Q^0:(0,L)\times\R\times \R^{3} \to \R$, depending
on $(h)$, such that for every open subset $O\subset (0,L)$ and every 
$(\varrho,\Psi)\in L^2(0,L)\times L^2((0,L),\R_{\skw}^{3\times3})$ we have
\begin{equation} \label{matt1} 
\Kh(m(\varrho,\Psi),O) = \int_O Q^0(x_1,\varrho(x_1),\axl \Psi(x_1))\dd x_1\,.
\end{equation}
Moreover, for a.e.~$x_1\in (0,L)$, $Q^0(x_1,\cdot,\cdot):\R^4 \to \R$ is a bounded and coercive quadratic form.
\end{proposition}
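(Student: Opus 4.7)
The plan is to view $O\mapsto\Kh(m(\varrho,\Psi),O)$ as an absolutely continuous Borel measure on $(0,L)$ for every fixed $(\varrho,\Psi)$, extract its density by Lebesgue differentiation, and then verify that this density carries the claimed quadratic structure in $(\varrho,\axl\Psi)$ via the orthogonality property (c) of Lemma \ref{lemma:key}.

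First, I would check that $O\mapsto\Kh(m,O)$ extends to a finite, absolutely continuous Borel measure on $(0,L)$. Superadditivity on disjoint open sets follows from Lemma \ref{lemma:key}, since a global relaxation sequence restricts to an admissible competitor on each piece; subadditivity is obtained from the standard De Giorgi-style gluing between near-optimal relaxation sequences on subintervals, which preserves the admissibility conditions in (a). The two-sided bound $\alpha|\sym F|^2\leq Q^h\leq\beta|\sym F|^2$ then shows that this measure is squeezed between constant multiples of the Lebesgue integral of $\int_\omega|\sym\imath(m)|^2\dd x'$, proving absolute continuity.

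Next, choosing a countable subset $D\subset\R\times\R^{3\times 3}_{\skw}$ dense in the rational vector space sense, I would define
\[
Q^0(x_1,\varrho,\axl A):=\lim_{\eps\downarrow 0}\frac{1}{2\eps}\Kh\paren*{m(\varrho,A),(x_1-\eps,x_1+\eps)}\quad\text{for }(\varrho,A)\in D,
\]
the limit existing for a.e.\ $x_1$ on a common full-measure set by Lebesgue differentiation; measurability in $x_1$ is inherited from this construction. Positive $2$-homogeneity of $Q^0(x_1,\cdot,\cdot)$ is immediate from the quadratic nature of each $Q^h$. For the parallelogram identity I would invoke orthogonality: given optimal relaxation sequences $(\psi^h_1)$, $(\psi^h_2)$ for $(\varrho_1,A_1)$ and $(\varrho_2,A_2)$, the sum $\psi^h_1+\psi^h_2$ is admissible for $(\varrho_1+\varrho_2,A_1+A_2)$, and item (c) applied in both orders gives
\[
\lim_{h\downarrow 0}\int_O\A^h\paren*{\sym\imath(m(\varrho_i,A_i))+\sym\nabla_h\psi^h_i}:\sym\nabla_h\psi^h_{3-i}\dd x=0,\quad i=1,2,
\]
so the quadratic energy of the sum decouples into the sum of the individual energies; the uniqueness statement (d) ensures that $\psi^h_1+\psi^h_2$ is asymptotically optimal for $(\varrho_1+\varrho_2,A_1+A_2)$. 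Combined with homogeneity, this yields the parallelogram identity, so $Q^0(x_1,\cdot,\cdot)$ is a quadratic form on $\R\times\R^3$. Boundedness is inherited from $Q^h\leq\beta|\sym F|^2$, while coercivity follows from $Q^h\geq\alpha|\sym F|^2$ together with the elementary observation that, after integration over $\omega$, $(\varrho,a)\mapsto\int_\omega|\sym\imath(\varrho e_1+(\axl^{-1}a)\projyz)|^2\dd x'$ is an equivalent norm on $\R^4$.

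Finally, the integral representation (\ref{matt1}) is extended from constant pairs to piecewise constants by the additivity of the measure and from piecewise constants to arbitrary $L^2$ data by density, using the continuity of $\Kh(\cdot,O)$ in its first argument together with dominated convergence on the right-hand side under the two-sided bounds just established for $Q^0$. The principal obstacle is the parallelogram identity: controlling the cross energy between two independent optimal relaxation sequences, without any periodic or two-scale ansatz, is exactly what Lemma \ref{lemma:key}(c) is engineered for, and applying it correctly requires that the sum of two admissible relaxations remains admissible in the sense of (a)---which is immediate from linearity---and that uniqueness (d) identifies the sum with the optimal sequence of the sum. The remaining steps parallel the proof of \cite[Proposition 2.12]{MaVe15} and are abstract measure-theoretic manipulations.
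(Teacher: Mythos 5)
Your overall strategy --- view $O\mapsto\Kh(m,O)$ as an absolutely continuous measure, extract the density by Lebesgue differentiation, and show it is quadratic via the orthogonality property of Lemma~\ref{lemma:key}(c), then lift from constants to piecewise constants to general $L^2$ data --- is the right one (the paper itself only refers to \cite[Proposition~2.12]{MaVe15}, whose argument is of this kind). But the central step establishing quadraticity contains a genuine logical error. You claim that after applying (c) ``in both orders'' the ``quadratic energy of the sum decouples into the sum of the individual energies.'' Taken literally that reads $\Kh(m_1+m_2,O)=\Kh(m_1,O)+\Kh(m_2,O)$; combined with the $2$-homogeneity you invoke, this would give $4\Kh(m,O)=\Kh(2m,O)=2\Kh(m,O)$, forcing $\Kh\equiv 0$ and contradicting the coercivity you correctly assert later. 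What (c) actually delivers is weaker but exactly what you need: writing $E^h_i=\sym\imath(m_i)+\sym\nabla_h\psi^h_i$, orthogonality kills the $\sym\nabla_h\psi^h_{3-i}$ contribution in the cross term, leaving
\[
\lim_{h\downarrow0}\int_{O\times\omega}\A^h E^h_1:E^h_2\,\dd x
=\lim_{h\downarrow0}\int_{O\times\omega}\A^h E^h_1:\sym\imath(m_2)\,\dd x
=\lim_{h\downarrow0}\int_{O\times\omega}\A^h E^h_2:\sym\imath(m_1)\,\dd x\,,
\]
a quantity that is bilinear and symmetric in $(m_1,m_2)$ but in general nonzero (it is precisely the G\^ateaux derivative $\frac{\delta\Kh(m_1)}{\delta m}[m_2]$ of Section~\ref{sec:varder}). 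The parallelogram identity then follows because this bilinear piece changes sign under $m_2\mapsto -m_2$ and cancels between $\Kh(m_1+m_2,O)$ and $\Kh(m_1-m_2,O)$ --- not because the sum energy splits additively.

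There is a second, related circularity: you use uniqueness (d) to conclude that $\psi^h_1+\psi^h_2$ is asymptotically optimal for $m_1+m_2$, but (d) applies only to competitors already known to realize~(\ref{2:eq.minseq}), which is exactly what needs to be shown. The correct argument goes through (c): with $\psi^h_\Sigma$ the optimal sequence for $m_1+m_2$, set $\delta^h:=\psi^h_\Sigma-\psi^h_1-\psi^h_2$ (admissible with $\sym\nabla_h\delta^h$ bounded in $L^2$); applying (c) with test $\delta^h$ to $(m_1,\psi^h_1)$, to $(m_2,\psi^h_2)$, and to $(m_1+m_2,\psi^h_\Sigma)$, and subtracting, gives $\lim_h\int_\Omega\A^h\sym\nabla_h\delta^h:\sym\nabla_h\delta^h\,\dd x=0$, hence $\|\sym\nabla_h\delta^h\|_{L^2}\to0$ by coercivity of $\A^h$. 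Finally, the De~Giorgi gluing you propose for subadditivity is more machinery than is needed here: Lemma~\ref{lemma:key} already provides a single relaxation sequence valid for all open $O\subset(0,L)$, with $|\sym\nabla_h\psi^h|^2$ equi-integrable, so the family $Q^h(x,\sym\imath(m)+\sym\nabla_h\psi^h)$ is equi-integrable and $O\mapsto\Kh(m,O)$ is automatically the restriction to open sets of a finite absolutely continuous measure, with no gluing required.
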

At this point we also define function $Q_1^0:(0,L)\times\R^3 \to\R$ by
\begin{equation*}
Q_1^0(x_1, v) 
= \min_{z\in\R}Q^0(x_1,z, v)\quad\text{for all }v\in\R^{3}\ \text{ and a.e. }x_1\in (0,L)\,,
\end{equation*}
and function $\varrho_0:(0,L)\times\R^{3}\to\R$ satisfying 
$Q_1^0(x_1,\axl F) = Q^0(x_1,\varrho_0(x_1,\axl F),\axl F)$
for all $F\in\R_{\skw}^{3\times3}$ and a.e.~$x_1\in(0,L)$. One can also prove that 
$Q_1^0(x_1,\cdot)$ is a bounded and coercive quadratic form for a.e.~$x_1\in(0,L)$. 
The linear operators associated with the quadratic forms $Q^0(x_1,\cdot,\cdot)$ and $Q^0(x_1,\cdot)$ are denoted 
by $\mathbb{A}^0(x_1)$ and $\mathbb{A}_1^0(x_1)$, respectively. 
\subsection{Variational derivative of the limit elastic energy}\label{sec:varder}
Let $(h)\subset (0,+\infty)$ be a monotonically decreasing zero  sequence and
let $m\in L^2(\Omega,\R^3)$ be given. According to Lemma \ref{lemma:key}, there exist a subsequence 
still denoted by $(h)$ and a relaxation sequence $(\psi^h_m) \subset H^1(\Omega,\R^3)$ depending on $m$, 
satisfying $(\psi_{m,1}^h,h\psi_{m,2}^h,h\psi_{m,3}^h)\to0$ and $\tw(\psi_{m,2}^h,\psi_{m,3}^h)\to 0$ 
strongly in the $L^2$-norm, such that the limit elastic energy $\Kh(m):=\Kh(m,(0,L))$ is given by
\begin{align*}
\Kh(m) & = \lim_{h\downarrow0}\int_{\Omega} 
						Q^{h}(x,\sym\imath(m)  + \sym\nabla_{h}\psi^{h}_m)\dd x\\
&= \lim_{h\downarrow0}\frac12\int_\Omega \A^h(\sym\imath(m) + \sym\nabla_h\psi^h_m):
(\sym\imath(m) + \sym\nabla_h\psi^h_m)\dd x\,.
\end{align*}
In the following we compute the variational derivative of $\Kh$ at the point $m$.
Let $n\in C^\infty(\Omega,\R^3)$, such that $n(0,x') = 0$ for all $x'\in\Omega$, be a test function.
Then by definition
\begin{equation}\label{eq:varK}
\frac{\delta \K(m)}{\delta m}[n] = \lim_{\eps\downarrow0}\frac{\K(m + \eps n) - \K(m)}{\eps}\,.
\end{equation}
With a trick of successive adding of the corresponding relaxation sequences and using the orthogonality property
(\ref{eq:orth_prop_2}), for a suitable subsequence of $(h)$ we calculate:
\begin{align*}
\Kh(m &+ \eps n) - \Kh(m)\\
&= \lim_{h\downarrow0}\frac12\int_\Omega \A^h(\sym\imath(m + \eps n) + \sym\nabla_h\psi^h_{m + \eps n}):
(\sym\imath(m + \eps n) + \sym\nabla_h\psi^h_{m + \eps n})\dd x\\
&\quad - \lim_{h\downarrow0}\frac12\int_\Omega \A^h(\sym\imath(m) + \sym\nabla_h\psi^h_m):
(\sym\imath(m) + \sym\nabla_h\psi^h_m)\dd x\\
& = \lim_{h\downarrow0}\frac12\int_\Omega \A^h(\sym\imath(m + \eps n) + \sym\nabla_h\psi^h_{m + \eps n}):
\sym\imath(m + \eps n) \dd x\\
&\quad - \lim_{h\downarrow0}\frac12\int_\Omega \A^h(\sym\imath(m) + \sym\nabla_h\psi^h_m):
\sym\imath(m)\dd x\,\\
& = \lim_{h\downarrow0}\frac12\int_\Omega \A^h(\sym\imath(m + \eps n) + \sym\nabla_h\psi^h_{m + \eps n}):
(\sym\imath(m ) + \sym\nabla_h\psi^h_m) \dd x\\
& \quad+ \lim_{h\downarrow0}\frac\eps2\int_\Omega \A^h(\sym\imath(m + \eps n) 
+ \sym\nabla_h\psi^h_{m + \eps n}):(\sym\imath(n) + \sym\nabla_h\psi^h_n) \dd x\\
&\quad - \lim_{h\downarrow0}\frac12\int_\Omega \A^h(\sym\imath(m) + \sym\nabla_h\psi^h_m):
\sym\imath(m)\dd x\\
& = \lim_{h\downarrow0}\frac12\int_\Omega \A^h(\sym\imath(m) + \sym\nabla_h\psi^h_m):
\sym\imath(m + \eps n) \dd x\\
& \quad+ \lim_{h\downarrow0}\frac\eps2\int_\Omega \A^h (\sym\imath(n) 
+ \sym\nabla_h\psi^h_n): \sym\imath(m + \eps n)  \dd x\\
&\quad - \lim_{h\downarrow0}\frac12\int_\Omega \A^h(\sym\imath(m) + \sym\nabla_h\psi^h_m):
\sym\imath(m)\dd x\\
&=\lim_{h\downarrow0}\eps\int_\Omega \A^h (\sym\imath(m) + \sym\nabla_h\psi^h_m): \sym\imath(n)  \dd x\\ 
&\quad+\lim_{h\downarrow0}\frac{\eps^2}{2}\int_\Omega \A^h (\sym\imath(n) + \sym\nabla_h\psi^h_n): \sym\imath(n)  \dd x.
\end{align*}
Finally, according to the definition (\ref{eq:varK}) and utilizing the uniform $L^\infty$-bound 
for the sequence of tensors $(\A^h)$, we infer
\begin{equation}\label{eq:varder}
\frac{\delta \Kh(m)}{\delta m}[n] = \lim_{h\downarrow0}\int_\Omega \A^h(\sym\imath(m) 
+ \sym\nabla_h\psi^h_m):\sym\imath(n)\dd x\,.
\end{equation}

\section{Derivation of homogenized Euler--Lagrange equations --- proof of Theorem \ref{thm:main}}
\label{sec:homEL}

Taking the $L^2$-variation of the energy functional $\mathcal E^h$ defined by (\ref{1.eq:energy}),
one finds the Euler--Lagrange equation in the weak form:
\begin{equation}\label{eq:EL1}
\frac{\delta \mathcal E^h(y^h)}{\delta y^h}[\phi] 
= \int_\Omega\left(DW^h(x,\nabla_h y^h):\nabla_h\phi - h^3(f_2\phi_2 + f_3\phi_3)\right)\dd x = 0\,,
\end{equation}
for all test functions $\phi\in H_\omega^1(\Omega,\R^3)$. 
Let $\hat{y}^h$ be a stationary point of $\mathcal E^h$, i.e.~it satisfies (\ref{eq:EL1}). 
From the frame indifference of $W^h$ it follows that $R^TDW^h(x, RF) = DW^h(x,F)$ for all $R\in\SO(3)$,
$F\in\R^{3\times3}$ and a.e.~$x\in\Omega$, which implies (using that $\nabla_h\hat{y}^h = R^h(I + h^2G^h)$)
\begin{equation}\label{eq:DWDy}
DW^h(x,\nabla_h\hat{y}^h) = R^hDW^h(x, I + h^2G^h) = h^2R^hE^h\,.
\end{equation}
Taylor expansion around the identity gives
\begin{equation*}
DW^h(x, I + h^2G^h) = h^2G^h + \zeta^h(x,h^2G^h)\,,
\end{equation*}
where $\zeta^h$ is such that 
$|\zeta^h(\cdot,F)|/|F|\leq r(|F|)$ uniformly in $\Omega$, for all $F\in\R^{3\times3}$ and $h>0$. 
The latter follows from the assumption (\ref{eq:ass}) on admissible composite materials.
Since $D^2 W^h(x,I) = \A^h(x)$ and $\A^h(x)$ is a symmetric tensor, the above identity yields
\begin{equation}\label{eq:Edec}
E^h(x) = \A^h(x)\sym G^h(x) + \frac{1}{h^2}\zeta^h(x,h^2G^h)\,,
\end{equation}
which after employing (\ref{eq:symGh}) leads to
\begin{equation}\label{eq:s+r}
E^h = \A^h(\sym\imath(m) + \sym\nabla_h\psi^h) + 
\frac{1}{h^2}\zeta^h(\cdot,h^2G^h) + \A^h o^h\,.
\end{equation}

\subsection{Orthogonality property}
In order to identify the fixed part $m$ of the symmetrized strain as a stationary point of the limit
energy, we first prove the following result.
\begin{lemma}\label{thm:orth_prop}
Let $(\tensor{A}^h)$ be a sequence of tensors describing an admissible composite material, let 
$m$ be the fixed part of the symmetrized strain defined by (\ref{def:m}), and
$(\psi^h)\subset H^1(\Omega,\R^3)$ the corresponding relaxation sequence 
satisfying $(\psi_1^h,h\psi_2^h,h\psi_3^h)\to0$,
$\tw(\psi_2^h,\psi_3^h)\to 0$ strongly in the $L^2$-norm and 
$\|\sym \nabla_h\psi^h\|_{L^2(\Omega)} \leq C$. Then, for every 
sequence $(\vphi^h)\subset H^1(\Omega,\R^3)$ satisfying $(\vphi_1^h,h\vphi_2^h,h\vphi_3^h)\to0$,
$\tw(\vphi_2^h,\vphi_3^h)\to 0$
strongly in the $L^2$-norm and $(|\sym \nabla_h\vphi^h|^2)$ is equi-integrable, the following
\emph{orthogonality property} holds
\begin{equation}\label{eq:orth_prop}
\lim_{h\downarrow0}\int_\Omega\A^h(\sym \imath(m) + \sym\nabla_h\psi^h):\sym\nabla_h\vphi^h\dd x = 0\,.
\end{equation}
\end{lemma}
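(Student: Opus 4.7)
The plan is to reduce the statement to the orthogonality already available in Lemma~\ref{lemma:key}(c) for the optimal minimizing relaxation, and then bridge to the physical $\psi^h$ by exploiting the equi-integrability hypothesis on $\sym\nabla_h\vphi^h$.

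First, I would invoke Lemma~\ref{lemma:key} with the given $m$ to produce a companion relaxation $(\psi^h_m)\subset H^1(\Omega,\R^3)$ with equi-integrable squared strain (item (b)) satisfying orthogonality (\ref{eq:orth_prop_2}) of item (c). Since $(\vphi^h)$ satisfies (a) and is $L^2$-bounded in strain (by equi-integrability), item (c) applied with $\vphi^h$ as test yields
\[
\lim_{h\downarrow 0}\int_\Omega \A^h(\sym\imath(m)+\sym\nabla_h\psi^h_m):\sym\nabla_h\vphi^h\,\dd x=0,
\]
which reduces the target identity to showing
\[
\lim_{h\downarrow 0}\int_\Omega \A^h \sym\nabla_h(\psi^h-\psi^h_m):\sym\nabla_h\vphi^h\,\dd x=0.
\]

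The difference $\eta^h:=\psi^h-\psi^h_m$ satisfies (a) and has $L^2$-bounded strain but may carry $L^2$-concentrations preventing a direct application of (c). I would split it by a Fonseca--M\"uller-type equi-integrable truncation adapted to the scaled gradient and the constraint (a), in the spirit of the Griso decomposition appearing in Lemma~\ref{lemma:key}(b), writing $\sym\nabla_h\eta^h=\sym\nabla_h\xi^h+R^h$ where $(\xi^h)$ obeys (a) with $(|\sym\nabla_h\xi^h|^2)$ equi-integrable, $R^h$ is supported on sets $S_h$ with $|S_h|\to0$, and $\|R^h\|_{L^2(\Omega)}\leq C$. The concentrated remainder is then negligible by equi-integrability of $|\sym\nabla_h\vphi^h|^2$:
\[
\Bigl|\int_\Omega \A^h R^h:\sym\nabla_h\vphi^h\,\dd x\Bigr|\leq \beta\,\|R^h\|_{L^2(\Omega)}\,\|\sym\nabla_h\vphi^h\,\mathbf{1}_{S_h}\|_{L^2(\Omega)}\longrightarrow0.
\]

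For the equi-integrable piece $\xi^h$, I would run a two-parameter perturbation of the optimal relaxation: the competitor $\psi^h_m+s\xi^h+t\vphi^h$ ($s,t\in\R$) is admissible in the defining infimum of $\Khm(m,(0,L))$. Expanding $Q^h$ and canceling cross-terms involving $\psi^h_m$ by Lemma~\ref{lemma:key}(c) leaves the asymptotically nonnegative quadratic $s^2Q_\xi+2st\,C_{\xi,\vphi}+t^2Q_\vphi$, where $C_{\xi,\vphi}:=\lim\int\A^h\sym\nabla_h\xi^h:\sym\nabla_h\vphi^h$ is the quantity to annihilate and $Q_\xi,Q_\vphi$ are the asymptotic self-energies of $\xi^h$ and $\vphi^h$. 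The resulting Cauchy--Schwarz-type bound $|C_{\xi,\vphi}|^2\leq Q_\xi Q_\vphi$ must then be upgraded to $C_{\xi,\vphi}=0$ by coupling with the uniqueness property (d) of Lemma~\ref{lemma:key}, applied to a re-minimization of $\psi^h_m+\xi^h$ along admissible equi-integrable perturbations.

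The main obstacle is twofold. First, one must adapt the Fonseca--M\"uller truncation to the scaled-gradient rod setting while preserving the twist and displacement constraints in (a) together with the Korn-type inequality on the subspace $\B$ used in Theorem~\ref{thm:comp2}; a naive truncation would typically destroy the vanishing of $(\psi_1,h\psi_2,h\psi_3)$ and of $\tw(\psi_2,\psi_3)$ in $L^2$. Second, pinning down $C_{\xi,\vphi}=0$ rather than merely the Cauchy--Schwarz bound requires a delicate interplay between the semicontinuity characterization of $\Kh(m)$ and uniqueness (d), which is subtle because the physical $\psi^h$ is not a priori a minimizing sequence.
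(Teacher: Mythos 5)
Your proposal has a genuine gap: it attempts to derive the orthogonality property purely from the variational machinery of Lemma~\ref{lemma:key}, without using the stationarity of $\hat y^h$. This cannot work. The relaxation sequence $(\psi^h)$ in the lemma is the one arising from the strain decomposition (\ref{eq:symGh}) of $\hat y^h$, and it is not a minimizing sequence for $\Khm(m,(0,L))$. For an arbitrary $\psi^h$ with the stated properties, the claim (\ref{eq:orth_prop}) is simply false (take $m=0$, $\A^h=\mathrm{Id}$, and $\vphi^h=\psi^h$, assuming $|\sym\nabla_h\psi^h|^2$ happens to be equi-integrable: one would get $\|\sym\nabla_h\psi^h\|_{L^2}\to 0$, which is not implied by the hypotheses). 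Thus no amount of truncation, perturbation arguments, or re-minimization in the sense of Lemma~\ref{lemma:key}(d) can close the gap you identify at the end, because those tools only control the optimal relaxation $\psi^h_m$, not the physically determined $\psi^h$.

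The missing input is precisely the Euler--Lagrange equation (\ref{int.eq:EL1})/(\ref{eq:EL1}), which the lemma implicitly assumes (the whole of Section~\ref{sec:homEL} works under the hypothesis that $\hat y^h$ are stationary points). The paper's proof exploits it directly: after the Griso decomposition $\sym\nabla_h\vphi^h=\sym\imath((\Phi^h)'\projyz)+\sym\nabla_h\phi^h+o^h$ and the Fonseca--M\"uller truncation, the pieces $\phi^h$ and the associated $\hat\phi^h$ from (\ref{def:hphi}) are fed in as test functions in the weak Euler--Lagrange equation; combined with the expansion (\ref{eq:s+r}) of $E^h$, the linear-in-$\phi$ structure of the stationarity condition is exactly what produces the cancellation. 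The remainder $\frac1{h^2}\zeta^h(\cdot,h^2 G^h)$ is then controlled by a truncation on $S_h^\alpha$ and its complement, where the $L^\infty$-bound from Corollary~\ref{cor:fmp2} (with $s_h=h^{-\gamma}$) is crucial. Your Fonseca--M\"uller splitting and the observation about the concentrated remainder being killed by equi-integrability of $|\sym\nabla_h\vphi^h|^2$ are correct in spirit and do appear in the paper (in the truncation step), but the core of the argument must pass through the stationarity of $\hat y^h$, which your proposal never invokes.
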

\begin{proof}
Let $(\psi^h)\subset H^1(\Omega,\R^3)$ and $(\vphi^h)\subset H^1(\Omega,\R^3)$ be arbitrary sequences  
satisfying the assumptions of the theorem. Applying the Griso's decomposition to the sequence 
$(\vphi^h)$ (cf.~\cite[Corollary 2.3]{MaVe15}), there exist sequences $(\Phi^h)\subset H^1((0,L), \R^3_{\skw})$, 
$(\phi^h)\subset H^1(\Omega,\R^3)$ and $(o^h)\subset L^2(\Omega,\R^{3\times3})$ satisfying:
\begin{equation*}
\sym \nabla_h\varphi^h = \sym \imath((\Phi^h)'\projyz) + \sym\nabla_h \phi^h + o^h\,,
\end{equation*}
$\Phi^h\to 0$, $\phi^h\to 0$, $o^h\to 0$ strongly in the $L^2$-norm, and
\begin{equation}\label{eq:griso_bound}
\|\Phi^h\|_{H^1(0,L)} + \|\phi^h\|_{L^2(\Omega)} + \|\nabla_h\phi^h\|_{L^2(\Omega)} 
\leq C\|\sym\nabla_h\varphi^h\|_{L^2(\Omega)}\,,\quad\forall h>0\,.
\end{equation}
Furthermore, there exist subsequences $(\Phi^h)$ and $(\phi^h)$ (still denoted by $(h)$) and sequences
$(\tilde\Phi^h)\subset H^1((0,L),\R^3)$ and $(\tilde\phi^h)\subset H^1(\Omega,\R^3)$ such that 
$|\{\Phi^h \neq \tilde\Phi^h\}\cup \{(\Phi^h)'\neq (\tilde\Phi^h)'\}|\to0$ and 
$|\{\phi^h \neq \tilde\phi^h\}\cup \{\nabla\phi^h \neq \nabla\tilde\phi^h\}|\to0$ as $h\downarrow0$, and
the sequences $(|(\tilde\Phi^h)'|^2)$ and $(|\nabla_h\tilde\phi^h|^2)$ are equi-integrable (cf.~\cite{FMP98} and
\cite[Lemma 2.17]{MaVe15}). 
The rest of the proof will be divided into two parts showing the property (\ref{eq:orth_prop}) 
for sequences $(\tilde\phi^h)$ and $(\tilde\Phi^h)$, respectively. For ease of presentation, we will
in future denote these sequences again by $(\phi^h)$ and $(\Phi^h)$.

{\noindent\em Part 1.} The equi-integrability property of the sequence $(\phi^h)$ 
allows us to modify each $\phi^h$ to zero near the boundary (cf.~\cite[Lemma 3.6]{Vel15}), thus, making it
an eligible test function in the Euler--Lagrange equation (\ref{eq:EL1}).
Using the identity (\ref{eq:s+r}) and the modified $\phi^h$ as a test function
in the Euler--Lagrange equation (\ref{eq:EL1}), after division by $h^2$, we obtain (according to \eqref{eq:DWDy}) 
\begin{align*}
\int_\Omega R^h\A^h & (\sym \imath(m) + \sym\nabla_h\psi^h):\nabla_h\phi^h\dd x 
= \int_\Omega R^h\left(E^h - \frac{1}{h^2}\zeta^h(\cdot,h^2G^h) - \A^h o^h\right):\nabla_h\phi^h\dd x\\
& = \int_\Omega h(f_2\phi^h_2 + f_3\phi_3^h) - 
	\int_\Omega R^h\left(\frac{1}{h^2}\zeta^h(\cdot,h^2G^h) + \A^h o^h\right):\nabla_h\phi^h\dd x\,.
\end{align*}
Obviously, the first and the last term converge to $0$ as $h\downarrow0$. Let us examine the 
second term
\begin{equation*}
\frac{1}{h^2}\int_\Omega R^h\zeta^h(\cdot,h^2G^h) :\nabla_h\phi^h\dd x\,.
\end{equation*}
Denote the set $S_h^\alpha := \{x\in \Omega\ : \ h^2|G^h(x)|\leq h^{\alpha}\}$ for some $0 < \alpha < 2$.
On $S_h^\alpha$ we have 
\begin{equation*}
\frac{|\zeta^h(\cdot,h^2G^h)|}{h^2|G^h|}|G^h| 
\leq \sup\left\{\frac{|\zeta^h(\cdot,h^2\tilde G^h)|}{h^2|\tilde G^h|} 
\ : \ h^2|\tilde G^h|\leq h^\alpha\right \}|G^h|
\leq r(h^\alpha)|G^h|\,.
\end{equation*}
Therefore, 
\begin{align*}
\frac{1}{h^2}\left| \int_{S_h^\alpha} R^h\zeta^h(\cdot,h^2G^h) :\nabla_h\phi^h\dd x \right| 
\leq r(h^\alpha)\|R^h\|_{L^\infty(\Omega)}\|G^h\|_{L^2(\Omega)}\|\nabla_h\phi^h\|_{L^2(\Omega)}
\leq Cr(h^\alpha)\to0\,,
\end{align*}
as $h\downarrow0$.
On the other hand, on $\Omega\backslash S_h^\alpha$ we have a pointwise a.e.~bound
\begin{equation*}
\frac{1}{h^2}|\zeta^h(\cdot,h^2G^h)| \leq C|G^h|\quad\text{a.e.~on }\Omega\backslash S_h^\alpha\,,
\end{equation*} 
which in fact holds pointwise a.e.~on $\Omega$. This follows by the traingle inequality from (\ref{eq:Edec})
using (\ref{eq:Eh_pb}) and $|\A^h(x)G^h(x)|\leq \beta|G^h(x)|$ for a.e.~$x\in\Omega$. 
Therefore, using the H\"older and Chebyshev inequalities, respectively, we find
\begin{align*}
\frac{1}{h^2}\left| \int_{\Omega\backslash S_h^\alpha} R^h\zeta^h(\cdot,h^2G^h) :\nabla_h\phi^h\dd x \right| 
& \leq C\int_{\Omega\backslash S_h^\alpha} |G^h||\nabla_h\phi^h|\dd x \\
& \leq C \|\nabla_h\phi^h\|_{L^\infty(\Omega\backslash S_h^\alpha)}\int_{\Omega\backslash S_h^\alpha} |G^h|\dd x\\
& \leq C \|\nabla_h\phi^h\|_{L^\infty(\Omega)}|\Omega\backslash S_h^\alpha|^{1/2}\\
& \leq C\|\nabla_h\phi^h\|_{L^\infty(\Omega)}h^{1-\alpha/2}\,.
\end{align*}
In order to successfully pass to the limit when $h\downarrow0$, we have to again replace the sequence $(\phi^h)$ by
a sequence obtained by means of Corollary \ref{cor:fmp2} (cf.~Lemma \ref{lema:fmp1}). 
We choose a strictly increasing 
sequence $(s_h)\subset (0,+\infty)$ defined by $s_h = h^{-\gamma}$ with a constant $\gamma >0$ 
satisfying $1 - \alpha/2 - \gamma > 0$. The obtained sequence $(\tilde{\phi}^h)$ then satisfies 
$\|\nabla_h\tilde\phi^h\|_{L^\infty(\Omega)} \leq Cs_h$, and we infer
\begin{equation*}
\lim_{h\downarrow0}\frac{1}{h^2}
\left| \int_{\Omega\backslash S_h^\alpha} R^h\zeta^h(\cdot,h^2G^h) :\nabla_h\tilde\phi^h\dd x \right| = 0\,.
\end{equation*}
Thus, we have shown
\begin{equation*}
\lim_{h\downarrow0}\int_\Omega R^h\A^h(\sym \imath(m) + \sym\nabla_h\psi^h):\nabla_h\tilde\phi^h\dd x = 0\,.
\end{equation*}
Since $R^h \to I$ strongly in the $L^\infty$-norm, it follows that
\begin{equation*}
\lim_{h\downarrow0}\int_\Omega \A^h(\sym \imath(m) + \sym\nabla_h\psi^h):\nabla_h\tilde\phi^h\dd x = 0\,,
\end{equation*}
while the symmetry property of $\A^h(\sym \imath(m) + \sym\nabla_h\psi^h)$ eventually implies
\begin{equation}\label{eq:conv2}
\lim_{h\downarrow0}\int_\Omega \A^h(\sym \imath(m) + \sym\nabla_h\psi^h):\sym \nabla_h\tilde\phi^h\dd x = 0\,.
\end{equation}
Due to the fact that 
$\{\phi^h = \tilde\phi^h\} = \{\phi^h = \tilde\phi^h, \nabla_h\phi^h = \nabla_h\tilde\phi^h\}\cup \set N$ 
\cite[Theorem 3, Sec.~6]{EG92},
where $\set N$ is a set of measure zero, and $|\{\phi^h \neq\tilde\phi^h\}|\to 0$ as $h\to0$, we deduce
\begin{align*}
\lim_{h\downarrow0}\int_\Omega \A^h(\sym \imath(m) & + \sym\nabla_h\psi^h):\sym \nabla_h\phi^h\dd x  \\
& \quad = \lim_{h\downarrow0}\int_\Omega \A^h(\sym \imath(m) + \sym\nabla_h\psi^h):\sym \nabla_h\tilde\phi^h\dd x 
= 0\,.
\end{align*}

{\em\noindent Part 2.}
Again, the equi-integrability property of the sequence $(\Phi^h)$ 
allows us to modify each $\Phi^h$ to zero near the boundary, thus, making the following functions
\begin{align}\label{def:hphi}
\hat\phi^h(x) = \Big(\Phi^h_{12}(x_1)x_2 +  \Phi^h_{13}(x_1)x_3\,, 
-\frac{1}{h}&\int_0^{x_1}\Phi_{12}^h(s)\dd s + \Phi_{23}^h(x_1)x_3\,,\\
& -\frac{1}{h}\int_0^{x_1}\Phi_{13}^h(s)\dd s - \Phi_{23}^h(x_1)x_2 \Big)\,,\nonumber
\end{align}
eligible test functions in the Euler--Lagrange equation (\ref{eq:EL1}).
One easily calculates
\begin{align*}
\sym\nabla_h\hat\phi^h & = 
\left(
\begin{array}{ ccc }
(\Phi^h_{12})'(x_1)x_2 + (\Phi^h_{13})'(x_1)x_3 & \frac12(\Phi^h_{23})'(x_1)x_3 & -\frac12(\Phi^h_{23})'(x_1)x_2 \\
\frac12(\Phi^h_{23})'(x_1)x_3  & 0 & 0 \\
-\frac12(\Phi^h_{23})'(x_1)x_2 & 0 & 0
\end{array}
\right)\\ 
& = \sym \imath((\Phi^h)'\projyz)\,.
\end{align*}
Using $\hat{\phi}^h$ as a test function in (\ref{eq:EL1}) together with
 the symmetry property of the matrix $DW^h(\cdot, F)F^T$, we obtain
\begin{align*}
\frac{1}{h^2}\int_\Omega DW^h & (x, R^h(I + h^2G^h)):\nabla_h\hat{\phi}^h\dd x \\ 
&= \frac{1}{h^2}\int_\Omega R^hDW^h(x, I + h^2G^h)(I + h^2G^h)^T(R^h)^T : \sym\nabla_h\hat{\phi}^h\dd x\\
&\quad -\frac{1}{h^2}\int_\Omega R^hDW^h(x, I + h^2G^h)\left((R^h)^T - I + h^2(G^h)^T(R^h)^T\right) 
: \nabla_h\hat{\phi}^h\dd x\\
& = \int_\Omega R^hE^h(I + h^2G^h)^T(R^h)^T : \sym \imath((\Phi^h)'\projyz) \dd x \\ 
&\quad -\int_\Omega R^hE^h\left(\frac{1}{h}((R^h)^T - I) + h(G^h)^T(R^h)^T\right) 
: h\nabla_h\hat{\phi}^h\dd x\,.
\end{align*}
Therefore, the Euler--Lagrange equation becomes
\begin{align}
\int_\Omega R^hE^h&(I + h^2G^h)^T(R^h)^T : \sym \imath((\Phi^h)'\projyz)\nonumber \dd x\\ 
&= \int_\Omega R^hE^h\left(\frac{1}{h}((R^h)^T - I) + h(G^h)^T(R^h)^T\right) 
: h\nabla_h\hat{\phi}^h\dd x + h\int_\Omega (f_2\hat\phi^h_2 + f_3\hat\phi^h_3)\dd x\,.\label{eq:EL2}
\end{align}
Since $(h\hat\phi^h_2,h\hat\phi^h_3)\to 0$ strongly in the $L^2$-norm, the force term vanishes at the limit. 
According to (\ref{eq:griso_bound}), $\|(\Phi^h)'\|_{L^2(0,L)}$ is uniformly bounded 
implying the strong convergence $h\nabla_h\hat{\phi}^h \to 0$ in the $L^2$-norm,
therefore, 
\begin{equation*}
\lim_{h\downarrow0}\frac{1}{h}\int_\Omega R^hE^h((R^h)^T - I)
: h\nabla_h\hat{\phi}^h\dd x = 0\,.
\end{equation*}
In order to infer zero at the limit as $h\downarrow0$ for the remaining term on the right-hand side in (\ref{eq:EL2}), namely
$$h\int_\Omega R^hE^h(G^h)^T(R^h)^T : h\nabla_h\hat{\phi}^h\dd x\,, $$ we
need to replace the sequence $(\Phi^h)$ with the one obtained by means of Lemma \ref{lema:fmp1}.
We take the sequence $(s_h)$ as above and obtain a sequence $(\tilde\Phi^h)$ satisfying 
$\|\tilde{\Phi}^h\|_{W^{1,\infty}(0,L)} \leq Cs_h$ for some $C>0$.
The last bound together with continuous Sobolev embedding $H^1((0,L),\R^3)\hookrightarrow L^\infty((0,L),\R^3)$ 
imply $\|h\nabla_h\tilde{\hat\phi}^h\|_{L^\infty} \leq C$, 
where, in view of (\ref{def:hphi}), notation $\tilde{\hat\phi}^h$ is self-explaining. 
From the latter we conclude that the second term on the right-hand side of (\ref{eq:EL2}) vanishes and
infer that
\begin{equation}
\lim_{h\downarrow0}\int_\Omega R^hE^h (I + h^2G^h)^T(R^h)^T : \sym \imath((\tilde\Phi^h)'\projyz)\dd x 
= 0\,.
\end{equation}
Obviously, 
\begin{equation*}
\lim_{h\downarrow0}\int_\Omega h^2R^hE^h (G^h)^T(R^h)^T : \sym \imath((\tilde\Phi^h)'\projyz)\dd x 
= 0\,,
\end{equation*}
and therefore,
\begin{equation}\label{eq:conv3}
\lim_{h\downarrow0}\int_\Omega R^hE^h(R^h)^T : \sym \imath((\tilde\Phi^h)'\projyz) \dd x 
= 0\,.
\end{equation}
Next, we prove that
\begin{equation}\label{eq:Eh_conv}
\lim_{h\downarrow0}\int_\Omega E^h : \sym \imath((\tilde\Phi^h)'\projyz) \dd x 
= 0\,.
\end{equation}
This follows by writing 
\begin{align*}
\int_\Omega E^h : \sym \imath((\tilde\Phi^h)'\projyz) \dd x 
= \int_\Omega \left(R^h + (I - R^h)\right)E^h \left(R^h + (I - R^h)\right)^T 
: \sym \imath((\tilde\Phi^h)'\projyz) \dd x\,,
\end{align*}
and using the convergence result (\ref{eq:conv3}) with the fact that $R^h\to I$  
strongly in the $L^\infty$-norm. Now, recall that
\begin{equation*}
\A^h(\sym \imath(m) + \sym\nabla_h\psi^h) = E^h - \frac{1}{h^2}\zeta^h(\cdot,h^2G^h) + o^h\,,
\end{equation*}
where $o^h\to0$ strongly in the $L^2$-norm. 
Using truncation arguments on the sets $S_h^\alpha$ and its complement, as in the first part of the proof, 
we conclude
\begin{equation*}
\lim_{h\downarrow0}\int_\Omega  \frac{1}{h^2}\zeta^h(\cdot,h^2G^h) : \sym \imath((\tilde\Phi^h)'\projyz) \dd x 
= 0\,.
\end{equation*}
Since $\lim_{h\downarrow0}\int_\Omega  o^h : \sym \imath((\tilde\Phi^h)'\projyz) \dd x = 0\,,$ convergence
result (\ref{eq:Eh_conv}) implies
\begin{equation}\label{eq:conv1}
\lim_{h\downarrow0}\int_\Omega \A^h(\sym \imath(m) + \sym\nabla_h\psi^h) : \sym \imath((\tilde\Phi^h)'\projyz) \dd x
= 0\,.
\end{equation}
We finalize the proof with a conclusion analogous to the one from Part 1.
\end{proof}

\subsection{Identification of the limit Euler--Lagrange equations}
Let us now more closely identify terms in the Euler--Lagrange equation (\ref{eq:EL1}) 
and consider the limit when $h\downarrow0$. 
The same reasoning as in Part 2 of the proof of
Lemma \ref{thm:orth_prop} gives, after division by $h^2$, the Euler--Lagrange equation (\ref{eq:EL1}) in the form
\begin{align}
\int_\Omega R^hE^h&(I + h^2G^h)^T(R^h)^T : \sym \nabla_h\phi\, \dd x \nonumber\\
&= \int_\Omega R^hE^h\left(\frac{1}{h}((R^h)^T - I) + h(G^h)^T(R^h)^T\right)
: h\nabla_h{\phi}\,\dd x + h\int_\Omega (f_2\phi_2 + f_3\phi_3)\dd x\, \label{eq:EL3} %
\end{align}
for all test functions $\phi\in H^1_\omega(\Omega,\R^3)$. 
The aim is now to identify the limit equation in (\ref{eq:EL3}) as $h \downarrow0$. 
Using the facts that, up to a term converging to zero strongly in the $L^2$-norm,
\begin{equation*}
E^h = \A^h(\sym \imath(m) + \sym\nabla_h\psi^h) + \frac{1}{h^2}\zeta^h(\cdot,h^2G^h)\,,
\end{equation*} 
$R^h\to I$ strongly in the $L^\infty$-norm, and 
\begin{equation*}
\lim_{h\downarrow0}\int_\Omega h^2R^hE^h (G^h)^T(R^h)^T : \sym\nabla_h\phi\,\dd x = 0\,,
\end{equation*}
the limit when $h\downarrow0$ (if it exists) of 
\begin{align*}
\int_\Omega R^hE^h&(I + h^2G^h)^T(R^h)^T : \sym\nabla_h{\phi}\,\dd x
\end{align*}
equals the limit 
\begin{align*}
\lim_{h\downarrow0}\int_\Omega \big(\A^h(\sym \imath(m) 
+ \sym\nabla_h\psi^h) + \frac{1}{h^2}\zeta^h(\cdot,h^2G^h)\big) 
: \sym\nabla_h{\phi}\,\dd x\,.
\end{align*}
The remainder term $\frac{1}{h^2}\int_\Omega\zeta^h(\cdot,h^2G^h): \sym\nabla_h{\phi}\,\dd x$ vanishes in the same 
way as in the proof of Lemma \ref{thm:orth_prop}, and on the limit as $h\downarrow0$, 
equation (\ref{eq:EL3}) reduces to 
\begin{align}
\lim_{h\downarrow0}& \int_\Omega \A^h(\sym \imath(m) 
+ \sym\nabla_h\psi^h) : \sym\nabla_h{\phi}\,\dd x\,\nonumber\\
&= \lim_{h\downarrow0} \left(\int_\Omega R^hE^h\left(\frac{1}{h}((R^h)^T - I) + h(G^h)^T(R^h)^T\right)
: h\nabla_h{\phi}\,\dd x + h\int_\Omega (f_2\phi_2 + f_3\phi_3)\dd x\right)\,.\label{eq:homredEL}
\end{align}

First, consider the test function $\phi(x) = \phi_{11}(x_1)e_1$ with $\phi_{11}$ smooth and $\phi_{11}(0)=0$. 
Since $\phi_{2} = \phi_{3} = 0$, 
 $\sym\nabla_h\phi = \phi_{11}'(x_1)e_1\otimes e_1$, and $h\nabla_h\phi\to0$ strongly in the $L^2$-norm,
(\ref{eq:homredEL}) amounts to 
\begin{equation}\label{eq:phi1}
\lim_{h\downarrow0}\int_\Omega \A^h(\sym \imath(m) 
+ \sym\nabla_h\psi^h) : \phi_{11}'(x_1)e_1\otimes e_1\,\dd x = 0\,.
\end{equation}
Next, consider test functions of the form $\phi^h_{ij}(x) = hx_j\phi_{ij}(x_1)e_i$ for
$i=1,2,3$, $j=2,3$, where $\phi_{ij}$ is smooth with $\phi_{ij}(0) = 0$. The functions $\phi_{ij}^h$ obviously satisfy 
$(\phi_{ij,1}^h, h\phi_{ij,2}^h, h\phi_{ij,3}^h)\to 0$ and $\tw(\phi_{ij,2}^h, \phi_{ij,3}^h)\to 0$ strongly in the
$L^2$-norm.
Calculating
\begin{equation*}
\sym\nabla_h\phi_{ij}^h 
= \sym(hx_j\phi_{ij}'e_i\,|\, \delta_{2j}\phi_{ij}e_i\,|\, \delta_{3j}\phi_{ij}e_i)\,,
\end{equation*}
we easily conclude from (\ref{eq:homredEL}) that
\begin{equation}\label{eq:phi_ij}
\lim_{h\downarrow0}\int_\Omega \A^h(\sym \imath(m) 
+ \sym\nabla_h\psi^h) : \phi_{ij}(x_1)e_i\otimes e_j\,\dd x = 0\,,
\end{equation}
for all $i=1,2,3$, $j=2,3$.
Finally, consider the test function given by
\begin{align*}
\phi^h(x) = \left(\Phi_{12}(x_1)x_2 + \Phi_{13}(x_1)x_3\,, 
\frac{1}{h}\int_0^{x_1}\Phi_{21}(s)\dd s + \Phi_{23}(x_1)x_3\,,
\frac{1}{h}\int_0^{x_1}\Phi_{31}(s)\dd s + \Phi_{32}(x_1)x_2 \right),
\end{align*}
where $\Phi : [0,L]\to \R^{3\times3}_{\skw}$ is smooth and $\Phi(0) = 0$.
On the right-hand side of (\ref{eq:homredEL}), using the convergence results: $R^h\to I$ strongly in the $L^\infty$-norm,
$hG^h\to0$ strongly in the $L^2$-norm, $A^h\to A$ strongly in the $L^\infty$-norm, as well as the
approximation identity (\ref{eq:s+r}) for $E^h$, we are left with
\begin{align*}
\lim_{h\downarrow0}\int_\Omega \A^h &(\sym \imath(m)  + \sym \nabla_h\psi^h)A^T : {\Phi}\,\dd x\\
 &+ \int_0^L\left(f_2(x_1)\int_0^{x_1}\Phi_{21}(s)\dd s + f_3(x_1)\int_0^{x_1}\Phi_{31}(s)\dd s \right)\dd x_1\,.
\end{align*}
Let us now consider the first term of the obtained expression. Due to the real matrix identity
$XY:Z = -X:ZY$, for $Y$ being skew-symmetric matrix, the first term equals (up to a minus sign)
\begin{align*}
\lim_{h\downarrow0}\int_\Omega \A^h &(\sym \imath(m)  + \sym \nabla_h\psi^h) : {\Phi}A\,\dd x\,,
\end{align*}
and since the first matrix is symmetric, the latter in fact equals to
\begin{align}\label{eq:leftover}
\lim_{h\downarrow0}\int_\Omega \A^h &(\sym \imath(m)  + \sym \nabla_h\psi^h) : \sym({\Phi}A)\dd x\,.
\end{align}
The matrix $\Phi A$ can be explicitly computed, and its symmetric part is given by 
\begin{equation*}
\sym(\Phi A) = 
\left(
\begin{array}{ccc}
 \Phi_{12}v_2' + \Phi_{13}v_3' & \frac12(\Phi_{23}v_3' + \Phi_{13}w) & -\frac12(\Phi_{23}v_2' + \Phi_{12}w) \\
 \frac12(\Phi_{23}v_3' + \Phi_{13}w) & \Phi_{12}v_2' + \Phi_{23}w & \frac12(\Phi_{13}v_2' + \Phi_{12}v_3')\\
 -\frac12(\Phi_{23}v_2' + \Phi_{12}w) & \frac12(\Phi_{13}v_2' + \Phi_{12}v_3') & \Phi_{13}v_3' + \Phi_{23}w
 \end{array}
\right)\,.
\end{equation*}
Defining the sequence of test functions $(\varphi^{h}_A)$ by
\begin{align*}
 \varphi^h_A(x) =  
 \left( \begin{array}{c} 
 			\Phi_{23}v_3' + \Phi_{13}w \\ \Phi_{12}v_2' + \Phi_{23}w \\ \Phi_{13}v_2' + \Phi_{12}v_3'
 		\end{array} 
 \right) + hx_3 
 \left(
 		\begin{array}{c} 
 		- \Phi_{23}v_2' - \Phi_{12}w \\ \Phi_{13}v_2' + \Phi_{12}v_3' \\ \Phi_{13}v_3' + \Phi_{23}w
 		\end{array}
 \right)
 \,,
\end{align*}
it is straightforward to check that
\begin{equation}\label{eq:symPhiA}
\sym (\Phi A) = \sym\nabla_h\varphi^h_A + (\Phi_{12}v_2' + \Phi_{13}v_3')e_1\otimes e_1
  + o^h\,,
\end{equation}
where $o^h$ converges to zero strongly in the $L^2$-norm as $h\downarrow0$.
Observe that the sequence of test functions $(\vphi^h_A)$ 
satisfies $(\vphi^h_{A,1}, h\vphi^h_{A,2}, h\vphi^h_{A,3})\to 0$ and
$\tw(\vphi^h_{A,2},\vphi^h_{A,3})\to 0$ strongly in the $L^2$-norm.
Utilizing (\ref{eq:symPhiA}) in expression (\ref{eq:leftover}), 
we confer that due to the orthogonality property (\ref{eq:orth_prop}), convergence result
(\ref{eq:phi1}) and strongly to zero convergence 
of $o^h$, these terms vanish in the limit as $h\downarrow0$. 
Since,
\begin{equation}
\sym\nabla_h\phi^h = 
\left(
\begin{array}{ ccc }
\Phi_{12}'(x_1)x_2 + \Phi_{13}'(x_1)x_3 & \frac12\Phi_{23}'(x_1)x_3 & -\frac12\Phi_{23}'(x_1)x_2 \\
\frac12\Phi_{23}'(x_1)x_3  & 0 & 0 \\
-\frac12\Phi_{23}'(x_1)x_2 & 0 & 0
\end{array}
\right) = \sym \imath((\Phi)'\projyz)\,,\label{eq:sym Phi'}
\end{equation}
the left-hand side in (\ref{eq:homredEL}) can be written as
\begin{equation}\label{eq:Phi}
\lim_{h\downarrow0}\int_\Omega \A^h(\sym \imath(m) 
+ \sym\nabla_h\psi^h) : \sym \imath(\Phi'\projyz)\,\dd x\,.
\end{equation}
Combining (\ref{eq:phi1}), (\ref{eq:phi_ij}) and (\ref{eq:Phi}), the resolved limiting 
Euler--Lagrange equation (\ref{eq:homredEL}) reads
\begin{align}
\lim_{h\downarrow0}\int_\Omega \A^h(\sym \imath(m) 
+  \sym\nabla_h\psi^h) :  \sym & 
\,\Big(\phi_{11}'e_1\otimes e_1  + \sum_{i=1, j=2}^3\phi_{ij}e_i\otimes e_j 
+ \imath( \Phi'\projyz) \Big)\,\dd x\nonumber \\
&= -\int_0^L(f_2\tilde\Phi_{12} + f_3\tilde\Phi_{13})\dd x_1\,,\label{eq:ELred2}
\end{align}
where $\tilde\Phi_{1j}(x_1) = \int_0^{x_1}\Phi_{1j}(s)\dd s$ for $j=2,3$.
Now, to conclude the proof, the obtained equation 
(neglecting the terms $\sum_{i=1, j=2}^3\phi_{ij}e_i\otimes e_j$ in the first sum due to (\ref{eq:phi_ij}))
is to be interpreted as
\begin{equation*}
\frac{\delta \Kh(m)}{\delta m}\Big[\phi_{11}'e_1\otimes e_1 + \Phi'\projyz\Big] = 
-\int_0^L(f_2\tilde\Phi_{12} + f_3\tilde\Phi_{13})\dd x_1\,.
\end{equation*}
Since $(\sym \nabla_h\psi^h)$ is bounded in the $L^2$-norm, 
according to \cite[Lemma 2.17]{MaVe15}, there exists a subsequence (still denoted by $(h)$) and sequence
$(\tilde\psi^h)$ such that $(|\sym \nabla_h\tilde\psi^h|^2)$ is equi-integrable and 
$\|\sym \nabla_h \psi^h-\sym \nabla_h \tilde\psi^h\|_{L^2(O^h)} \to 0$, where $O^h \subset \Omega$ such that
$|\Omega \backslash O^h| \to 0$.
From (\ref{eq:ELred2}) we see that the same limit equation will be obtained if we replace the relaxation
sequence $(\psi^h)$ by $(\tilde\psi^h)$. 
Let $(\psi_m^h)$ be the relaxation sequence for $m$ from Lemma \ref{lemma:key}. 
Using the coercivity of $Q^h$ and the orthogonality properties (\ref{eq:orth_prop_2}) and
(\ref{eq:orth_prop}) of both sequences $(\psi_m^h)$ and $(\tilde\psi^h)$, respectively, we find that
\begin{align*}
\alpha\|\sym \nabla_h(\psi_m^h - \tilde\psi^h)\|_{L^2}^2 &\leq 
\int_\Omega Q^h(x,\sym\nabla_h(\psi_m^h - \tilde\psi^h))\dd x\\
& = \frac12\int_\Omega\A^h(\sym \imath(m) + \sym\nabla_h\psi_m^h):\sym \nabla_h(\psi_m^h - \tilde\psi^h)\dd x\\
&\quad - \frac12\int_\Omega\A^h(\sym \imath(m) + \sym\nabla_h\psi^h):\sym \nabla_h(\psi_m^h - \tilde\psi^h)\dd x\to 0
\end{align*}
as $h\downarrow0$. Therefore, we can also replace the sequence $(\tilde\psi^h)$ by $(\psi_m^h)$ 
and according to (\ref{eq:varder}), $m$ is indeed the stationary point of the limit functional $\Kh$.
Finally, since the stationarity of the point $(u,v_2,v_3,w)$ for
the functional $\mathcal E^0$ is (up to the linear force term) equivalent to the stationarity of $m$ 
(defined by (\ref{def:m})) for the functional $\Kh$, this finishes the proof of Theorem \ref{thm:main}.

In the subsequent part of the section we identify the limit Euler--Lagrange equations. 
Recalling the approximation identity (\ref{eq:s+r}), the weak convergence $E^h\rightharpoonup E$ in 
$L^2(\Omega,\R^{3\times3})$, and utilizing convergence properties for
the remainder terms, we can pass to the limit in
equation (\ref{eq:ELred2}) and obtain
\begin{equation*}
\int_\Omega E : \sym \,\Big(\phi_{11}'e_1\otimes e_1  + \sum_{i=1, j=2}^3\phi_{ij}e_i\otimes e_j 
+ \imath( \Phi'\projyz) \Big)\,\dd x 
= -\int_0^L(f_2\tilde\Phi_{12} + f_3\tilde\Phi_{13})\dd x_1\,.
\end{equation*}
In view of identity (\ref{eq:sym Phi'}), the latter equals
\begin{align}
\int_\Omega \Big(E_{11}\phi_{11}' & + \sum_{i=1, j=2}^3E_{ij}\phi_{ij}\, + x_2E_{11}\Phi_{12}'(x_1) + x_3E_{11}\Phi_{13}'(x_1) \\
&\quad + x_3E_{12}\Phi'_{23}(x_1) - x_2E_{13}\Phi'_{23}(x_1)\Big)\dd x\nonumber
 = -\int_0^L(f_2\tilde\Phi_{12} + f_3\tilde\Phi_{13} )\dd x_1\,.\label{eq:ELred3}
\end{align}
Using the moment notation (\ref{1.def:moment0})--(\ref{1.def:moments1}) 
and the fact that $\tilde\Phi_{1j}' = \Phi_{1j}$ for $j=2,3$, (\ref{eq:ELred3}) becomes
\begin{align}
\int_0^L \Big(\overline E_{11}\phi_{11}' + \sum_{i=1,j=2}^3\overline E_{ij}\phi_{ij} + \wtilde E_{11}\tilde\Phi_{12}'' + \what E_{11}\tilde\Phi_{13}'' 
& + \what E_{12}\Phi'_{23} - \wtilde E_{13}\Phi'_{23}\Big)\dd x_1 \nonumber\\
&\quad  = -\int_0^L(f_2\tilde\Phi_{12} + f_3\tilde\Phi_{13})\dd x_1\,.
\end{align}
Now by the arbitrariness of test functions, we easily derive the corresponding strong formulation
for the moments. The zeroth-order moments satisfy
\begin{align}
\overline E &= 0 \quad \text{in }(0,L)\,.
\end{align}
The first-order moments $\wtilde E_{11}$ and $\what E_{11}$ satisfy second-order boundary-value problems:
\begin{equation}
\arraycolsep=1.4pt\def\arraystretch{1.4}
\begin{array}{l}
\wtilde E_{11}''+ f_2 = 0 \quad \text{in }(0,L)\,, \\
\wtilde E_{11}(L) = \wtilde E_{11}'(L) = 0\,,
\end{array}
\end{equation}
and 
\begin{equation}
\arraycolsep=1.4pt\def\arraystretch{1.4}
\begin{array}{l}
\what E_{11}''+ f_3 = 0 \quad \text{in }(0,L)\,, \\
\what E_{11}(L) = \what E_{11}'(L) = 0\,,
\end{array}
\end{equation}
respectively. Finally, the first-order moments $\what E_{12}$ and $\wtilde E_{13}$ satisfy the first-order problem
\begin{equation}
\arraycolsep=1.4pt\def\arraystretch{1.4}
\begin{array}{l}
\what E_{12}' - \wtilde E_{13}' = 0 \quad \text{in }(0,L)\,, \\
\what E_{12}(L) = \wtilde E_{13}(L)\,.
\end{array}
\end{equation}
It remains to derive constitutive equations, which connect the moments of the limit stress with 
limit displacements and twist functions.
For $\varrho\in L^2(0,L)$ and $\Psi\in L^2((0,L),\R_{\skw}^{3\times3})$, recall the functional
\begin{align*}
\Kh(m(\varrho,\Psi)) &= \int_0^L Q^0(x_1,\varrho(x_1),\axl \Psi(x_1))\dd x_1\,,
\end{align*}
where $m(\varrho,\Psi)(x) = \varrho(x_1)e_1 + \Psi(x_1)\projyz$, and the functional
\begin{align*}
\Kh^0(\Psi) &= \int_0^L Q_1^0(x_1,\axl \Psi(x_1))\dd x_1 
= \int_0^L Q^0(x_1,\varrho_0(x_1,\axl\Psi(x_1)),\axl \Psi(x_1))\dd x_1\\
&= \Kh(m_0(\varrho_0,\Psi))\,,
\end{align*}  
where $\varrho_0:(0,L)\times \R^{3}\to \R$ is optimal for a given $\axl\Psi$. 
By Lemma \ref{lemma:key} (identity (\ref{2:eq.minseq})),
there exist sequences $(\psi^h_m)\subset H^1(\Omega,\R^3)$ and $(\psi^h_0)\subset H^1(\Omega,\R^3)$ such that:
\begin{align*}
\Kh(m(\varrho,\Psi)) &= \lim_{h\downarrow0}\int_\Omega Q^h(x, \sym\imath(m) + \sym\nabla_h\psi^h_m)\dd x\,,\\
\Kh^0(\Psi) &= \lim_{h\downarrow0}\int_\Omega Q^h(x, \sym\imath(m_0) + \sym\nabla_h\psi^h_0)\dd x\,.
\end{align*}
Using the orthogonality property (\ref{eq:orth_prop_2}) and tricks as in Section \ref{sec:varder}, we calculate:
\begin{align}
\frac{\delta \Kh(m(\varrho,\Psi))}{\delta\varrho}[\phi] \label{eq:dKhdphi}
&= \lim_{h\downarrow0}\int_\Omega \A^h(\sym \imath(m) 
+ \sym\nabla_h\psi_m^h):\imath(\phi e_1)\dd x\,,\\
\frac{\delta \Kh^0(\Psi)}{\delta\Psi}[\Phi] 
&= \lim_{h\downarrow0}\int_\Omega \A^h(\sym \imath(m_0) + \sym\nabla_h\psi_0^h):\sym\imath(\Phi\projyz)\dd x\,,
\label{eq:dKhdPsi1}
\end{align}
for all $\phi\in C_0^\infty(0,L)$ and $\Phi\in C_0^\infty((0,L),\R_{\skw}^{3\times3})$.
On the other hand, from the representation of function $Q_1^0$ as a pointwise quadratic form, we have
\begin{equation}\label{eq:dKhdPsi2}
\frac{\delta \Kh^0(\Psi)}{\delta\Psi}[\Phi] 
= \int_0^L \A_1^0(x_1)\axl \Psi(x_1)\cdot\axl \Phi(x_1)\dd x_1\,.
\end{equation}
Now, if we consider $m(x) = (u' + \frac12((v_2')^2 + (v_3')^2))e_1 + A'\projyz$,
it follows from formulae (\ref{eq:dKhdphi}) and (\ref{eq:phi1}) that
\begin{equation*}
\frac{\delta \Kh(m(a,A'))}{\delta\varrho}[\phi] = 0
\end{equation*}
for all $\phi\in C_0^\infty(0,L)$, where $a(x_1) = u' + \frac12((v_2')^2 + (v_3')^2)$.
In particular, this implies the optimality of the function $a$ for matix function $A'$ in the sense that 
$Q_1^0(\cdot,\axl A') = Q^0(\cdot,a,\axl A')$.
Equating expressions in (\ref{eq:dKhdPsi1}) and (\ref{eq:dKhdPsi2}) for $\Psi = A'$ 
and $\varrho_0 = a$, we obtain the identity
\begin{equation*}
\int_0^L \A_1^0(x_1)\axl A'(x_1)\cdot\axl \Phi(x_1)\dd x_1 = \int_\Omega E:\imath(\Phi\projyz)\dd x\,,
\end{equation*}
for all $\Phi\in C_0^\infty((0,L),\R_{\skw}^{3\times3})$.
From the latter we recognize the following system
\begin{align*}
-(\A_1^0\axl A')_{3}  &= \wtilde E_{11}\,,\\
(\A_1^0\axl A')_2 &= \what E_{11}\,,\\
-(\A_1^0\axl A')_1 &= \what E_{12} - \wtilde E_{13}\,,
\end{align*}
which is a linear second-order system for the limit displacements $v_2$, $v_3$ 
and the limit twist function $w$, and which needs to be accompanied by the following boundary conditions
$v_i(0)=v'_i(L) = 0$ for $i=2,3$, and $w(0) = 0$.
The obtained boundary-value problem represents the homogenized Euler--Lagrange equations
for the von K\'arm\'an rod model. Finally, the scaled displacement $u$ can deduced from the optimality 
property of the function $a$ for the matix function $A'$ and the initial condition $u(0)=0$.

\section{Stochastic Homogenization}\label{sec:sthom}

In this section we will give an explicit cell formula for the quadratic form $Q^0$ 
(limit energy density in expresion \eqref{matt1}) under the assumption of random 
material along the characteristic dimension of rod.
Providing the cell formula for the limit energy in the stochastic 
setting, we will also recover periodic and almost periodic structures.
The methods we are using here are largely based on works \cite{DuGl15}, \cite{HoVe16} and \cite{ZP06}.
Firstly, we will introduce general notion and tools of stochastic 
homogenization, thereafter we will explore the tools needed for thin structures and finally
derive and prove the cell formulae.

\subsection{Stochastic homogenization}

\begin{definition}
A familiy $(T_x)_{x \in \R^n}$ of measurable bijective mappings $T_x \colon \randomspace \to \randomspace$ 
on the probability space $(\randomspace, \randomalgebra, \randommeasure)$ is called a \emph{dynamical system} 
on $\randomspace$ with respect to $\randommeasure$ if:
\begin{enumerate}
\item $T$ is additive, i.e.\ $T_x \circ T_y = T_{x+y}$ for all $x,y \in \R^n$;
\item $T$ is measure- and measurability-preserving, i.e.\ $T_x B$ is measurable and 
$\randommeasure(T_x B) = \randommeasure(B)$ for all $ x\in \R^n$ and $B \in \randomalgebra$;
\item The mapping $\mathcal A \colon \randomspace \times \R^n \to \randomspace$, defined by 
$\mathcal A(\randomelement, x) = T_x(\randomelement)$, is measurable in the pair of 
$\sigma$-algebras $(\randomalgebra \times \mathcal L^n,\randomalgebra)$, 
where $\mathcal L^n$ denotes the family of Lebesgue measurable sets.
\end{enumerate}
\end{definition}
\noindent The key property, which will allow us to derive the cell formula, is ergodicity.
\begin{definition}
A dynamical system $T$ is called {\em ergodic}, if one of the following (equivalent) conditions is fulfilled:
\begin{enumerate}
\item If $f \colon \randomspace \to \randomspace$ is measurable s.t.~$f(\randomelement) = f(T_x \randomelement)$ 
for all $x \in \R^n$ and a.e.~$\randomelement \in \randomspace$, 
then $f$ is $\randommeasure$-a.e.~equal to a constant. 
\item If for some $B \in \randomalgebra$ for all $x \in \R^n$ the set $(T_x B \cup B) \setminus (T_x B \cap B)$ is a null set, 
then $\randommeasure(B) \in  \{ 0, 1\}$.
\end{enumerate}
\end{definition}
\noindent One of the most important consequences of ergodicity is the famous Birkhoff's ergodicity theorem:
\begin{theorem}\label{thm:birkhoff}
Let $T$ be an ergodic, dynamical system and $g \in L^1(\randomspace)$. Then
\begin{equation}\label{eq:ergodicity}
  \lim_{t\to\infty} \frac 1 { t^n \abs A} \int_{tA} g(T_x \trandomelement) \dx = \int_\randomspace g(\randomelement) \drandommeasure
\end{equation}
for almost all $\trandomelement$, for all bounded Borel sets $A \subset \R^n$ with $\abs A > 0$.
\end{theorem}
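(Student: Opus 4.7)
Since Theorem \ref{thm:birkhoff} is the classical Birkhoff ergodic theorem for continuous $\R^n$-actions, the plan is to reduce it to the well-known discrete multiparameter version by cube-averaging, identify the a.s.\ limit via ergodicity of the flow, and finally extend from cubes to arbitrary bounded Borel sets. First I would treat the case $A=[0,1]^n$ and $g\in L^\infty(\randomspace)$. Introducing the cell-averaged function $\bar g(\trandomelement) := \int_{[0,1]^n} g(T_y\trandomelement)\,\dd y$ and decomposing the continuous integral as a sum over unit cubes yields
\[
\frac{1}{t^n}\int_{[0,t]^n} g(T_x\trandomelement)\,\dx
= \frac{1}{\lfloor t\rfloor^n}\sum_{k\in\{0,\ldots,\lfloor t\rfloor-1\}^n}\bar g(T_k\trandomelement) + O\bigl(t^{-1}\|g\|_{L^\infty}\bigr).
\]
Applying the multiparameter discrete Birkhoff theorem to $\bar g$ under the $\Z^n$-action $(T_k)_{k\in\Z^n}$ gives a.s.\ convergence of the right-hand side to a $\Z^n$-invariant limit $L(\trandomelement)\in L^1(\randomspace)$ whose integral equals $\int_\randomspace g\,\drandommeasure$ (the latter by Fubini and measure preservation).

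Next I would upgrade the $\Z^n$-invariance of $L$ to full $\R^n$-invariance, after which ergodicity of the flow forces $L$ to be a.s.\ equal to the constant $\int_\randomspace g\,\drandommeasure$. Denoting by $F(t,\trandomelement)$ the average appearing on the left-hand side of \eqref{eq:ergodicity} with $A=[0,1]^n$, this follows from the boundary-layer estimate
\[
\abs{F(t,T_y\trandomelement) - F(t,\trandomelement)}
\leq \frac{\|g\|_{L^\infty}}{t^n}\,\bigl|([0,t]^n+y)\triangle[0,t]^n\bigr| = O(t^{-1}),
\]
valid for every $y\in\R^n$, which yields $L\circ T_y = L$ a.s. The general $g\in L^1$ case then follows from the continuous-parameter maximal ergodic inequality (obtained via a Vitali covering argument on $\R^n$) together with density of $L^\infty$ in $L^1$.

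Finally, for a generic bounded Borel set $A$ with $\abs{A}>0$, I would first fix a single null set outside which the cube convergence holds simultaneously for every element of the countable family of dyadic cubes, then invoke outer regularity of the Lebesgue measure to sandwich $A$ between finite unions of dyadic cubes $K^-\subset A\subset K^+$ with $\abs{K^+\setminus K^-}<\eps$, apply the cube result cube-by-cube, and pass to the limit $\eps\downarrow 0$. The principal technical obstacle is precisely the continuous-parameter maximal ergodic inequality on $\R^n$: it is needed for the $L^1$-extension in the cube step and to control the boundary error in the Borel approximation. Once this ingredient is in place every remaining step is a routine reduction to, or approximation by, the discrete multiparameter ergodic theorem.
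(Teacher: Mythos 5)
The paper does not prove Theorem~\ref{thm:birkhoff}: it is quoted as the classical Wiener--Birkhoff pointwise ergodic theorem for measure-preserving $\R^n$-actions, so there is no ``paper's proof'' to compare against. Your sketch is the standard reduction to the discrete multiparameter ergodic theorem (cell-averaging over unit cubes, discrete Birkhoff for the induced $\Z^n$-action, boundary-layer estimate to upgrade $\Z^n$-invariance of the limit to $\R^n$-invariance, ergodicity to force constancy, $L^1$-extension by a maximal inequality), and in outline it is sound.

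There is, however, one genuine gap in the last step. For a general bounded Borel set $A$ of positive measure you \emph{cannot} sandwich $A$ between finite unions of dyadic cubes $K^-\subset A\subset K^+$ with $|K^+\setminus K^-|<\eps$: a set like a fat Cantor set has positive measure but empty interior, so the only such $K^-$ is empty and the sandwich gives nothing. What regularity of Lebesgue measure actually provides is a finite union of dyadic cubes $K$ with $|A\triangle K|<\eps$, without inclusion in either direction. To control the discrepancy
\[
\frac1{t^n}\Bigl|\int_{tA}g(T_x\trandomelement)\,\dx-\int_{tK}g(T_x\trandomelement)\,\dx\Bigr|
\leq \frac1{t^n}\int_{t(A\triangle K)}|g(T_x\trandomelement)|\,\dx\,,
\]
you either need $g\in L^\infty$ (then the bound is $\|g\|_{L^\infty}|A\triangle K|$), or you must run the $L^1$- and Borel-set extensions \emph{simultaneously}: split $g=g_1+g_2$ with $g_1\in L^\infty$ and $\|g_2\|_{L^1}$ small, handle $g_1$ as above, and for $g_2$ dominate the whole expression by $\frac1{t^n}\int_{tC}|g_2(T_x\trandomelement)|\,\dx$ over a large cube $C\supset A\cup K$, which by the cube result converges to $|C|\int|g_2|$. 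Doing the extensions sequentially, as your sketch suggests, does not close the argument. A further minor point: when the discrete ergodic theorem hands you a $\Z^n$-invariant limit $L$ and the boundary estimate gives $L\circ T_y=L$ a.e.\ for each fixed $y$, the null set a priori depends on $y$; one should invoke Fubini (using measurability of $(\randomelement,x)\mapsto T_x\randomelement$) and the standard modification lemma to obtain a version of $L$ invariant for \emph{all} $x$, which is what Definition~2.2(1) of ergodicity in the paper requires.
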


Let $L^p(\randomspace)$ denotes the set of measurable $p$-integrable functions $b \colon \randomspace \to \R$.
In order to guarantee that spaces $L^p(\randomspace)$ for $p \geq 1$ are separable we assume that the $\sigma$-algebra 
$\randomalgebra$ is countably generated.
The dynamical system allows for more structure on the space $\randomspace$. Denote by $U(x)$ a unitary operator
\[
  U(x):  L^2(\randomspace) \to L^2(\randomspace), \quad U(x) b = b \circ T_x\,.
\]
If for $b \in L^2(\randomspace)$ and $1 \leq k \leq n$ the limit
\[
 \lim_{h \downarrow 0} \frac { b(T_{h\cdot e_k} \randomelement) - b(\randomelement) } h
\]
exists in the $L^2$-sense, then we call it the $k$-th derivative of $b$ and denote it by $D_k b$. The operators 
$D_k$ are infinitesimal generators of maps $T_{x_k}$. Thus, $iD_1, \ldots, iD_n$
are commuiting, self-adjoint, closed and densely defined linear operators on the 
separable Hilbert space $L^2(\randomspace)$.  Let $\mathcal D_k(\randomspace)$ denotes the domain 
of the operator $D_k$, and define the space $W^{1,2}(\randomspace)$ as 
\[
  W^{1,2}(\randomspace) := \mathcal D_1(\randomspace) \cap \ldots \cap \mathcal D_n(\randomspace),
\]
equipped with norm
\[
\norm b_{W^{1,2}(\randomspace)}^2 =\norm {b}^2_{L^2(\randomspace)} + \sum_{k = 1}^n \norm {D_i b}^2_{L^2(\randomspace)}.
\]
We also define the semi-norm
\[
  |b|_{W^{1,2}(\randomspace)}^2 = \sum_{k = 1}^n \norm {D_i b}^2_{L^2(\randomspace)}\,,
\]
and analogously the following Sobolev-type spaces:
\begin{align*}	
  W^{k, 2}(\randomspace) &:= \{ b \in L^2(\randomspace)\, :\, 
  D^{\alpha_1}_1 \ldots D^{\alpha_n}_n b \in L^2(\randomspace),\ \alpha_1 + \ldots + \alpha_n = k \}\,, \\
  W^{\infty, 2}(\randomspace) &:= \bigcap_{k \geq 0} W^{k,2}(\randomspace)\,.
\end{align*}
Furthermore, we define the set of {\em stochastically smooth} functions as
\[
  \stochsmooth(\randomspace) := \{ f \in W^{\infty, 2}(\randomspace): \forall (\alpha_1, \ldots, \alpha_n) 
  \in \N^n_0, \quad D^{\alpha_1}_1 \ldots D^{\alpha_n}_n b \in L^\infty(\randomspace) \}.
\]
The space $\stochsmooth(\randomspace)$ is dense in $L^2(\randomspace)$ (\cite{BMW94}, Lemma~2.1(b)) 
and separable (\cite{BMW94}, Lemma~2.2).
At this point we would like to emphasize, that in the stochastic setting 
we do not have Poincar\'e or Sobolev estimates. Hence, the $L^2$-integrability of 
higher-order derivatives does not yield an $L^\infty$-bound on the derivatives. 
Especially, the space $W^{1,2}(\randomspace)$ is in general incomplete w.r.t.~to the seminorm 
$|\cdot|_{W^{1,2}(\randomspace)}$. Therefore, we introduce its completion denoted 
as $\stochsobolev^{1,2}(\randomspace)$. Differential operators $D_k$ then
extend uniquely as operators $W^{1,2}(\randomspace) \to L^2(\randomspace)$ 
to continuous operators $\stochsobolev^{1,2}(\randomspace) \to L^2(\randomspace)$. 
The $n$-tuple of differential operators $D = (D_1, \ldots, D_n)$ will be called {\em stochastic gradient}.

We say that elements $\trandomelement \in \randomspace$ are {\em typical}, if the identity in 
the Birkhoff's ergodicity theorem \eqref{eq:ergodicity} holds for all $g \in \stochsmooth(\randomspace)$, 
and a trajectory $x \mapsto T_x \trandomelement$ will be called {\em typical}, if $\trandomelement$ is typical. 
Note that separability of $\stochsmooth(\randomspace)$ implies that almost every $\randomelement \in \randomspace$ 
is typical. This enables us to prove the following.
\begin{lemma}
Let $n = 1$. Then for every $b \in L^2(\randomspace)$ with $\int_\randomspace b(\randomelement) \drandommeasure = 0$,
there exists $g\in \stochsobolev^{1,2}(\randomspace)$ such that
\[
  D_1 g = b\,.
\]
\end{lemma}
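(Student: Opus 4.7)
The plan is to use the standard resolvent (Yosida-regularization) approach for the skew-adjoint generator $D_1$, and to close the argument by invoking ergodicity to identify $\ker D_1$ with the constants. Since $iD_1$ is self-adjoint by hypothesis, $D_1$ is skew-adjoint on $L^2(\randomspace)$ and is the infinitesimal generator of the strongly continuous unitary group $(U(x))_{x\in\R}$. For each $\lambda > 0$, I would define
\[
  g_\lambda := (\lambda I + D_1)^{-1} b = \int_0^\infty e^{-\lambda s}\, U(s) b\, ds \ \in\ W^{1,2}(\randomspace)\,,
\]
where the integral converges in the Bochner sense in $L^2(\randomspace)$ because $U(s)$ is unitary. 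By construction, $g_\lambda$ lies in the domain of $D_1$, solves $\lambda g_\lambda + D_1 g_\lambda = b$, and satisfies $\|g_\lambda\|_{L^2(\randomspace)} \le \lambda^{-1}\|b\|_{L^2(\randomspace)}$. Rearranging the equation gives $D_1 g_\lambda = b - \lambda g_\lambda$, so the entire argument reduces to proving that $\lambda g_\lambda \to 0$ in $L^2(\randomspace)$ as $\lambda \downarrow 0$.

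This is the decisive step. By the spectral theorem applied to $iD_1$, the bounded operators $\lambda(\lambda I + D_1)^{-1}$ converge strongly as $\lambda\downarrow 0$ to the orthogonal projection $P_0$ onto $\ker D_1$. Ergodicity enters here crucially: any $f\in\ker D_1$ satisfies $U(x)f = f$ for all $x$, i.e.\ $f\circ T_x = f$ a.e.\ in $\randomelement$ for every $x\in\R$, so by the first equivalent characterization of ergodicity, $f$ is $\randommeasure$-a.e.\ constant. Hence $P_0$ is the averaging map $f\mapsto \int_\randomspace f\,\drandommeasure$, and the zero-mean hypothesis on $b$ gives $P_0 b = 0$, so indeed $\lambda g_\lambda \to 0$ and therefore $D_1 g_\lambda \to b$ strongly in $L^2(\randomspace)$.

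This convergence implies that $(g_\lambda)$ is Cauchy with respect to the seminorm $|\cdot|_{\stochsobolev^{1,2}(\randomspace)}$, so by definition of the completion it converges to some $g \in \stochsobolev^{1,2}(\randomspace)$; continuity of the unique extension $D_1\colon \stochsobolev^{1,2}(\randomspace)\to L^2(\randomspace)$ then forces $D_1 g = b$, as required. The principal obstacle is the ergodic identification of $\ker D_1$ with the space of constants — everything else is routine functional analysis of one-parameter unitary groups — and the assumption $n=1$ is used only in that a single skew-adjoint generator is available, which makes the resolvent equation above directly solvable without dealing with compatibility conditions among several commuting generators.
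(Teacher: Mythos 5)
Your proof is correct, but it takes a genuinely different route from the paper. The paper's argument is essentially a one-liner: it cites the abstract Helmholtz--Weyl type decomposition $L^2(\randomspace) = F^2_{pot}(\randomspace) \oplus F^2_{sol}(\randomspace) \oplus \R$ from \cite[Prop.~A.9]{DuGl15} and then observes that for $n=1$ the solenoidal part vanishes identically (there is no curl in one dimension), so every zero-mean element of $L^2(\randomspace)$ already lies in $F^2_{pot}(\randomspace) = \mathrm{Cl}_{L^2}\{ D\chi : \chi \in W^{1,2}(\randomspace)\}$, which is precisely the range of the extended operator $D_1 : \stochsobolev^{1,2}(\randomspace) \to L^2(\randomspace)$. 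You instead give a self-contained functional-analytic construction: you build $g_\lambda$ by resolvent regularization (equivalently, the Laplace transform of the unitary group), use the spectral theorem for the skew-adjoint generator to show $\lambda(\lambda I + D_1)^{-1} \to P_{\ker D_1}$ strongly, and then, crucially, invoke ergodicity to identify $\ker D_1$ with the constants, so the zero-mean hypothesis kills the limit $\lambda g_\lambda$. This argument is longer but has two advantages: it avoids relying on the cited decomposition as a black box, and it exposes exactly where ergodicity enters — something the paper's proof hides inside the reference, since ergodicity is what forces the orthogonal complement of $F^2_{pot}\oplus F^2_{sol}$ to consist of constants alone. One small sign slip: since $D_1$ generates $U(s)$, the formula $\int_0^\infty e^{-\lambda s} U(s) b\, \dd s$ produces $(\lambda I - D_1)^{-1} b$, not $(\lambda I + D_1)^{-1} b$; replacing $U(s)$ by $U(-s)$ (or just flipping the sign of $g$ at the end) fixes it, and the rest of the argument goes through unchanged.
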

\begin{remark}
Notice that the zero mean value is necessary, since $\int Dg = 0$ for 
any $g \in \stochsobolev^{1,2}(\randomspace)$, as well as in general $g 
\notin W^{1,2}(\randomspace)$.
\end{remark}
\begin{proof}
By \cite[Proposition A.9.]{DuGl15}, there exists a decomposition 
\[
  L^2( \randomspace ) = F^2_{pot}(\randomspace) \oplus F^2_{sol}(\randomspace) \oplus \R\,,
\]
where 
\begin{align*}	
F^2_{pot}(\randomspace) &:= \mathrm{Cl}_{L^2} \{ D \chi : \chi \in W^{1,2}(\randomspace) \}\,, \\
F^2_{sol}(\randomspace) &:= \mathrm{Cl}_{L^2} \{ D \times \chi : \chi \in W^{1,2}(\randomspace) \}\,.
\end{align*}
For $n = 1$ we have $D \times \chi = 0$ by definition, and the statement follows. \qedhere
\end{proof}

The concept of two-scale convergence was first introduced by Nguetseng in \cite{Ngu89} for periodic problems, while 
Allaire further developed the concept and methods to a versatile tool \cite{All92}. 
For the stochastic setting, the first definition was given in \cite{BMW94}. However, that concept is not well 
suited for our purpose and we will instead use the following (slightly altered) definitions and results given 
in \cite{ZP06}.

\begin{definition}[Weak stochastic two-scale convergence]
Let $(T_{x}\trandomelement)_{x \in \R^n}$ be a typical trajectory and $(v^\eps)$ bounded sequence of functions 
in $L^2(\Omega)$. We say that $(v^\eps)$ {stochastically weakly two-scale converges} to 
$v^{\trandomelement} \in L^2(\Omega \times \randomspace)$ w.r.t.\ $\trandomelement$ and we write
  $v^\eps \wtwoscale v^{\trandomelement}$ if 
\[
  \lim_{\eps \downarrow 0} \int_{\Omega} v^\eps(x) \varphi(x) b(T_{\eps^{-1} x} \trandomelement) \dx 
  = \int_{\randomspace} \int_\Omega v^{\trandomelement}(x, \randomelement) \varphi(x) b(\randomelement) \dx \drandommeasure
\]
for all $\varphi \in C^\infty_0(\Omega)$ and $b \in \stochsmooth(\randomspace)$.
Vector-valued functions are said to stochastically weakly two-scale converge, 
if every component stochastically weakly two-scale converges. 
\end{definition}
\begin{remark}
The difference in this definition to the original one in \cite{ZP06} is the space $\stochsmooth(\randomspace)$ instead of $C^0(\randomspace)$ for the test functions $b$.
This allows us to skip the assumption of a metric on $\randomspace$.
Observe that the limit $v$ may depend on the choice of the typical element, moreover, the sequence $(v^\eps)$ 
may convergence for some typical elements, while not for others. 
From now on we fix a typical $\trandomelement \in \randomspace$ and supress any dependence on it.
\end{remark}
\begin{definition}[Strong stochastic two-scale convergence]
Let $(v^\eps)\subset L^2(\Omega)$ be a weakly stochastic two-scale convergent sequence
with limit $v^0 \in L^2(\Omega \times \randomspace)$. We say that $(v^\eps)$ converges strongly stochastic 
two-scale to $v^0$ if additonally
\[
  \lim_{\eps \downarrow 0} \int_{\Omega} v^\eps (x) u^\eps(x) \dx = \int_{\randomspace}\int_\Omega  v^0(x,\randomelement)u^0(x,\randomelement) \dx \drandommeasure
\]
for every $(u^\eps)\subset L^2(\Omega)$ weakly stochastically two-scale converging to 
$u^0 \in L^2(\Omega \times \randomspace)$. We denote that by $v^\eps \stwoscale v^0$.

\end{definition}

\begin{lemma}[Extension of the test functions]
If $v^\eps \wtwoscale v$, then 
\[
  \lim_{\eps \downarrow 0} \int_{\Omega} v^\eps(x) \varphi(x) b(T_{\eps^{-1}x_1} \trandomelement) \dx 
  = \int_{\randomspace} \int_\Omega v(x, \randomelement) \varphi(x) b(\randomelement) \dx \drandommeasure
\]
holds also for $b \in L^2(\randomspace)$.
\end{lemma}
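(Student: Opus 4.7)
The plan is to exploit the density of $\stochsmooth(\randomspace)$ in $L^2(\randomspace)$ to reduce the claim to the known case of smooth test functions, with the error controlled uniformly in $\eps$ via Birkhoff's ergodic theorem applied to a scalar $L^1$-function. Concretely, fix a sequence $(b_n) \subset \stochsmooth(\randomspace)$ with $b_n \to b$ in $L^2(\randomspace)$ and split
\[
\int_\Omega v^\eps(x)\,\varphi(x)\,b(T_{\eps^{-1}x_1}\trandomelement)\,\dx
= I_n^\eps + R_n^\eps,
\]
where $I_n^\eps$ is the same integral with $b$ replaced by $b_n$, and $R_n^\eps$ is the remainder involving $b - b_n$.

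For fixed $n$, the hypothesis $v^\eps \wtwoscale v$ applies directly to the admissible test function $\varphi(\cdot)\,b_n(T_{\eps^{-1}\cdot}\trandomelement)$, hence
\[
\lim_{\eps\downarrow0} I_n^\eps = \int_\randomspace\!\int_\Omega v(x,\randomelement)\,\varphi(x)\,b_n(\randomelement)\,\dx\,\drandommeasure.
\]
Since $v \in L^2(\Omega\times\randomspace)$ and $\varphi \in L^\infty$, Cauchy--Schwarz together with $b_n \to b$ in $L^2(\randomspace)$ shows that the right-hand side converges, as $n\to\infty$, to $\int_\randomspace\!\int_\Omega v\,\varphi\,b\,\dx\,\drandommeasure$, which is the desired limit.

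The technical heart of the argument is the uniform-in-$\eps$ bound on the remainder. By Cauchy--Schwarz and the $L^2(\Omega)$-boundedness of $(v^\eps)$ (built into the definition of weak two-scale convergence),
\[
|R_n^\eps| \leq C\,\|\varphi\|_{L^\infty(\Omega)}\Bigl(\int_\Omega |b-b_n|^2(T_{\eps^{-1}x_1}\trandomelement)\,\dx\Bigr)^{1/2}.
\]
Since the integrand depends only on $x_1$, Fubini gives $|\omega|\int_0^L |b-b_n|^2(T_{\eps^{-1}x_1}\trandomelement)\,\dx_1$, and substituting $y = \eps^{-1}x_1$ brings the expression into the form of Birkhoff's averages over $(0,L/\eps)$. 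Applied to the integrable function $|b-b_n|^2 \in L^1(\randomspace)$ with $A=(0,L)$, Theorem \ref{thm:birkhoff} yields
\[
\lim_{\eps\downarrow0}\int_\Omega |b-b_n|^2(T_{\eps^{-1}x_1}\trandomelement)\,\dx = |\Omega|\,\|b-b_n\|_{L^2(\randomspace)}^2,
\]
so $\limsup_{\eps\downarrow0}|R_n^\eps| \leq C\,\|b-b_n\|_{L^2(\randomspace)} \to 0$ as $n\to\infty$. Combining the two limits (first $\eps\downarrow0$ for fixed $n$, then $n\to\infty$) gives the claim.

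The only delicate point is that ``typicality'' has been defined by requiring \eqref{eq:ergodicity} for $g \in \stochsmooth(\randomspace)$, whereas in the last step I need it for the $L^1$-functions $|b-b_n|^2$ simultaneously for all $n$. The standard remedy is to observe that, since $\stochsmooth(\randomspace)$ is separable and dense in $L^1(\randomspace)$, a simple $L^1$-approximation argument (or, equivalently, enlarging ``typical'' to mean Birkhoff holds on a countable dense subset of $L^1$) shows that $\trandomelement$ can be chosen in a common full-measure set on which Birkhoff's identity extends to every fixed $L^1$-function of interest; this extension is where a little care is required, but it introduces no new ideas.
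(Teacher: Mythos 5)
The paper does not supply a proof of this lemma; it is one of several statements recalled from the stochastic homogenization literature (Zhikov--Pyatnitskii, Duerinckx--Gloria) and stated without argument, so there is no in-text proof to compare yours against. That said, your density-and-splitting strategy is the standard and correct route: fix smooth approximants $b_n \to b$ in $L^2(\randomspace)$, pass to the limit in the term with $b_n$ using the definition of weak two-scale convergence, and control the remainder by Cauchy--Schwarz together with $\lim_{\eps\downarrow 0}\int_\Omega |b-b_n|^2(T_{\eps^{-1}x_1}\trandomelement)\,\dx = |\Omega|\,\|b-b_n\|^2_{L^2(\randomspace)}$; the change of variables reducing this to Birkhoff averaging over $(0,L/\eps)$ is carried out correctly.

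The one genuine issue is the point you flag at the end, and your proposed remedy is not quite right as stated. There is no single full-measure set of $\trandomelement$ on which Birkhoff's identity holds simultaneously for \emph{all} $g\in L^1(\randomspace)$: the exceptional null set in the pointwise ergodic theorem depends on $g$, and passing from a countable $L^1$-dense family to a general $g$ at a \emph{fixed} $\trandomelement$ requires controlling $\limsup_{t}\bigl|\frac{1}{t|A|}\int_{tA}(g-g_k)(T_x\trandomelement)\,\dx\bigr|$, which is bounded only by the ergodic maximal function of $g-g_k$, itself finite merely almost surely with the null set again depending on $g-g_k$. So ``enlarge typicality to a countable dense subset of $L^1$ and then approximate'' does not deliver a $\trandomelement$-independent extension to all of $L^1$. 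What your argument genuinely yields is: for the fixed $b \in L^2(\randomspace)$ given in the statement and a chosen approximating sequence $(b_n)\subset\stochsmooth(\randomspace)$, Birkhoff holds for the countable family $\{|b-b_n|^2\}_n$ on a full-measure set that may depend on $b$. This is compatible with the paper's own caveat that the two-scale limit may depend on the chosen typical element, and it suffices for all uses in the paper, but the lemma should then be understood as holding for each fixed $b$ at almost every $\trandomelement$, with the typical set shrunk to accommodate that $b$; a $b$-uniform typical set is not something Birkhoff's theorem provides, and your last paragraph implicitly claims it does.
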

\begin{lemma}[Compactness]\label{lem:compactness}
Let $(v^\eps)$ be a bounded sequence in $L^2(\Omega)$,
then there exists a subsequence (not relabeled) and $v \in L^2(\Omega \times \randomspace)$ 
such that $v^{\eps} \wtwoscale v$.
\end{lemma}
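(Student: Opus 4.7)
The plan is to adapt Nguetseng's classical two-scale compactness argument to the stochastic setting: one constructs the two-scale limit as (the Riesz representative of) a bounded bilinear form obtained from a diagonal extraction from the functionals
\[
L^\eps(\varphi, b) = \int_\Omega v^\eps(x)\,\varphi(x)\, b(T_{\eps^{-1}x}\trandomelement) \dx\,,\qquad (\varphi,b)\in C^\infty_0(\Omega)\times \stochsmooth(\randomspace)\,.
\]
The first step is a uniform estimate. By Cauchy--Schwarz,
\[
|L^\eps(\varphi,b)|\leq \|v^\eps\|_{L^2(\Omega)}\,\Bigl(\int_\Omega |\varphi(x)|^2 |b(T_{\eps^{-1}x}\trandomelement)|^2 \dx\Bigr)^{1/2}\,.
\]
Since $\trandomelement$ is typical and $|b|^2 \in \stochsmooth(\randomspace)$ (the space is an algebra), Birkhoff's ergodicity theorem (Theorem~\ref{thm:birkhoff}) applied on balls exhausting $\Omega$, together with the continuity of $\varphi$, yields
\[
\int_\Omega |\varphi(x)|^2 |b(T_{\eps^{-1}x}\trandomelement)|^2 \dx \longrightarrow \|\varphi\|_{L^2(\Omega)}^2 \,\|b\|_{L^2(\randomspace)}^2\qquad\text{as }\eps\downarrow 0\,.
\]
Combined with the assumption $\|v^\eps\|_{L^2(\Omega)}\leq C$, this gives $\limsup_{\eps\downarrow 0} |L^\eps(\varphi,b)|\leq C\|\varphi\|_{L^2(\Omega)}\|b\|_{L^2(\randomspace)}$.

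The second step is the diagonal extraction. The space $C^\infty_0(\Omega)$ is separable in the $L^2(\Omega)$-norm, and $\stochsmooth(\randomspace)$ was noted in the excerpt to be separable in $L^2(\randomspace)$. Fixing countable dense subsets $\{\varphi_j\}$ and $\{b_k\}$, the bounded sequences of real numbers $(L^\eps(\varphi_j,b_k))_\eps$ admit a common convergent subsequence by a Cantor diagonal argument. Call the joint limit $L(\varphi_j,b_k)$. The uniform bound above, together with the density of $\{\varphi_j\}\otimes\{b_k\}$ in $L^2(\Omega)\otimes L^2(\randomspace)$, allows us to extend $L$ by continuity to a bilinear form on $L^2(\Omega)\times L^2(\randomspace)$ satisfying $|L(\varphi,b)|\leq C\|\varphi\|_{L^2(\Omega)}\|b\|_{L^2(\randomspace)}$; moreover, along the extracted subsequence one has $L^\eps(\varphi,b)\to L(\varphi,b)$ for every $(\varphi,b)\in C^\infty_0(\Omega)\times \stochsmooth(\randomspace)$, by a standard $3\eps$-argument using the uniform bound.

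The third and final step identifies $v$. Since simple tensors $\varphi\otimes b$ span a dense subspace of $L^2(\Omega\times\randomspace)$, the bilinear form $L$ defines a bounded linear functional on $L^2(\Omega\times\randomspace)$. By the Riesz representation theorem, there exists a unique $v\in L^2(\Omega\times\randomspace)$ such that
\[
L(\varphi,b) = \int_\randomspace\int_\Omega v(x,\randomelement)\,\varphi(x)\,b(\randomelement)\dx\drandommeasure\,,
\]
which is precisely the defining relation $v^\eps\wtwoscale v$ along the extracted subsequence.

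The main obstacle is the uniform norm bound in the first step: one must know that Birkhoff averages converge for the functions $|b|^2$ entering the Cauchy--Schwarz bound, which is why the restriction of test functions to $\stochsmooth(\randomspace)$ (an algebra of bounded, sufficiently regular elements to which typicality applies) is essential, and why the definition of weak stochastic two-scale convergence in this paper was tailored accordingly. Once the uniform bound is secured, the remaining steps are standard functional-analytic manipulations.
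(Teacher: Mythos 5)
The paper states this compactness lemma without proof, deferring to the literature (it is an adaptation of Zhikov--Pyatnitskii), so there is no in-paper argument to compare against; I will assess your proof on its own.

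Your overall scheme (Cauchy--Schwarz with Birkhoff for the a priori bound, diagonal extraction over countable dense families, Riesz representation) is the right one, and the first two steps are sound: the fact that $\stochsmooth(\randomspace)$ is an algebra is precisely what makes $|b|^2$ an admissible Birkhoff test function, and your treatment of the $\limsup$-only bound in the $3\eps$-argument goes through because for each fixed $(\varphi-\varphi_j,b)$ and $(\varphi_j,b-b_k)$ the Birkhoff limit kicks in for $\eps$ small enough depending only on those two fixed pairs.

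The gap is in the final step. From $|L(\varphi,b)|\leq C\|\varphi\|_{L^2(\Omega)}\|b\|_{L^2(\randomspace)}$ you conclude that $L$ ``defines a bounded linear functional on $L^2(\Omega\times\randomspace)$,'' but a bounded \emph{bilinear} form on $H_1\times H_2$ does not in general induce a bounded \emph{linear} functional on the Hilbert tensor product $H_1\otimes_2 H_2$: the bilinear bound controls the operator norm of the induced $T\colon H_1\to H_2$, whereas boundedness of the associated functional on $H_1\otimes_2 H_2\cong L^2(\Omega\times\randomspace)$ is equivalent to $T$ being Hilbert--Schmidt. (Concretely, the inner product $B(f,g)=\langle f,g\rangle$ on $\ell^2\times\ell^2$ is a bounded bilinear form, yet $\sum_n h(n,n)$ is unbounded on $\ell^2(\N\times\N)$.) So this step does not follow from what you established.

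The remedy uses the same ingredients but applied to the right object. Rather than bounding $L^\eps$ on simple tensors only, estimate it on an arbitrary finite linear combination $\Phi=\sum_i\varphi_i\otimes b_i$ with $\varphi_i\in C^\infty_0(\Omega)$, $b_i\in\stochsmooth(\randomspace)$: by Cauchy--Schwarz,
\[
\Bigl|\int_\Omega v^\eps(x)\,\Phi(x,T_{\eps^{-1}x}\trandomelement)\dx\Bigr|
\leq \norm{v^\eps}_{L^2(\Omega)}\Bigl(\int_\Omega\bigl|\Phi(x,T_{\eps^{-1}x}\trandomelement)\bigr|^2\dx\Bigr)^{1/2},
\]
and since $\stochsmooth(\randomspace)$ is an algebra, $|\Phi|^2=\sum_{i,j}\varphi_i\varphi_j\,b_ib_j$ is a finite sum of admissible simple tensors, so Birkhoff gives
\[
\int_\Omega\bigl|\Phi(x,T_{\eps^{-1}x}\trandomelement)\bigr|^2\dx \longrightarrow \int_\randomspace\int_\Omega |\Phi(x,\randomelement)|^2 \dx\,\drandommeasure = \norm{\Phi}_{L^2(\Omega\times\randomspace)}^2.
\]
This yields $\limsup_\eps |L^\eps(\Phi)|\leq C\norm{\Phi}_{L^2(\Omega\times\randomspace)}$ directly in the tensor norm, which is the correct a priori bound; the diagonal extraction (over countable dense sets of such $\Phi$) and the Riesz representation then proceed exactly as you describe. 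With that modification the proof is complete.
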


\begin{lemma}\label{lem:twoscalegradient}
Let $(u^\eps)$ be a bounded sequence in $W^{1,2}(\Omega)$. Then on a subsequence (not relabeled) 
$u^\eps \weakly u^0$ in $W^{1,2}(\Omega)$ and there exists $u^1 \in L^2( \Omega, \stochsobolev^{1,2}(\randomspace))$ 
such that
\begin{align*}	
  u^\eps \wtwoscale u^0 \quad\text{and}\quad
  \nabla u^\eps \wtwoscale \nabla u^0 + D u^1\,.
\end{align*}
\end{lemma}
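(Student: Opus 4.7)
The plan is to adapt the standard two-scale gradient recovery argument to the stochastic setting of \cite{ZP06, DuGl15}.

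First, the uniform bound of $(u^\eps)$ in $W^{1,2}(\Omega)$ together with Rellich--Kondrachov compactness yields, along a subsequence (not relabeled), $u^\eps \weakly u^0$ in $W^{1,2}(\Omega)$ and $u^\eps \to u^0$ strongly in $L^2(\Omega)$. Applying Lemma~\ref{lem:compactness} componentwise to $(u^\eps)$ and $(\nabla u^\eps)$ and extracting once more, we obtain $u^\eps \wtwoscale \bar u$ and $\nabla u^\eps \wtwoscale \xi$ for some $\bar u \in L^2(\Omega \times \randomspace)$ and $\xi \in L^2(\Omega \times \randomspace,\R^n)$. The strong $L^2$-convergence of $(u^\eps)$, combined with the weak $L^2(\Omega)$-convergence $b(T_{\cdot/\eps}\trandomelement) \weakly \int_\randomspace b\,\drandommeasure$ (Birkhoff, Theorem~\ref{thm:birkhoff}) applied to $b \in \stochsmooth(\randomspace)$, identifies $\bar u(x,\randomelement) = u^0(x)$ as independent of $\randomelement$.

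Second, to identify $\xi$, I would test $\nabla u^\eps$ against oscillating \emph{solenoidal} random fields. For $\varphi \in C_0^\infty(\Omega)$ and $b \in \stochsmooth(\randomspace,\R^n)$ with $D\cdot b = 0$, using $\nabla_x[b(T_{x/\eps}\trandomelement)] = \eps^{-1}(Db)(T_{x/\eps}\trandomelement)$, an integration by parts in $x$ gives
\begin{equation*}
\int_\Omega \nabla u^\eps \cdot \varphi(x)\, b(T_{x/\eps}\trandomelement)\,\dx
= -\int_\Omega u^\eps\,\nabla\varphi \cdot b(T_{x/\eps}\trandomelement)\,\dx,
\end{equation*}
the $\eps^{-1}$-contribution vanishing thanks to $D\cdot b = 0$. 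Passing to the limit $\eps\downarrow0$ using the two-scale convergence of $\nabla u^\eps$ on the left and the strong $L^2$-convergence of $u^\eps$ together with Birkhoff on the right, and then integrating by parts once more in $x$, we arrive at
\begin{equation*}
\int_\randomspace \int_\Omega \bigl(\xi(x,\randomelement) - \nabla u^0(x)\bigr)\cdot \varphi(x)\,b(\randomelement)\,\dx\,\drandommeasure = 0
\end{equation*}
for every admissible pair $(\varphi, b)$.

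Third, arbitrariness of $\varphi$ yields, for a.e.\ $x \in \Omega$, that $\xi(x,\cdot) - \nabla u^0(x)$ is $L^2(\randomspace,\R^n)$-orthogonal to every stochastically smooth solenoidal field. Choosing $b$ constant, one obtains $\int_\randomspace \xi(x,\randomelement)\,\drandommeasure = \nabla u^0(x)$, so $\xi(x,\cdot) - \nabla u^0(x)$ has zero $\randommeasure$-mean. Density of $\stochsmooth$ together with the orthogonal decomposition $L^2(\randomspace,\R^n) = F^2_{pot}(\randomspace) \oplus F^2_{sol}(\randomspace) \oplus \R^n$ from \cite[Proposition~A.9]{DuGl15} then force $\xi(x,\cdot) - \nabla u^0(x) \in F^2_{pot}(\randomspace)$. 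Since $D$ induces an isometric isomorphism of the quotient space $\stochsobolev^{1,2}(\randomspace)/\R$ onto $F^2_{pot}(\randomspace)$ by the very definition of $\stochsobolev^{1,2}(\randomspace)$ as completion under the seminorm $|\cdot|_{W^{1,2}(\randomspace)}$, this produces, for a.e.\ $x$, a representative $u^1(x,\cdot) \in \stochsobolev^{1,2}(\randomspace)$ satisfying $Du^1(x,\cdot) = \xi(x,\cdot) - \nabla u^0(x)$.

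I expect the main obstacle to sit not in the integration-by-parts step, which is essentially algebraic, but in the concluding measure-theoretic argument: producing a genuinely $L^2(\Omega; \stochsobolev^{1,2}(\randomspace))$-valued $u^1$, given that $u^1(x,\cdot)$ is determined by the pointwise equation $Du^1(x,\cdot) = \xi(x,\cdot) - \nabla u^0(x)$ only modulo additive constants. The above isometry between $\stochsobolev^{1,2}(\randomspace)/\R$ and $F^2_{pot}(\randomspace)$ should, however, transfer both measurability in $x$ and the $L^2$-bound directly from the well-defined object $\xi - \nabla u^0 \in L^2(\Omega\times\randomspace,\R^n)$ to the equivalence-class valued map $x \mapsto u^1(x,\cdot)$.
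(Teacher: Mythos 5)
The paper itself gives no proof of this lemma; after the statement it moves directly on, treating it as a known stochastic two-scale compactness result (essentially imported from \cite{ZP06}, in the spirit of the periodic result of Nguetseng/Allaire). So there is no in-paper argument to compare against; I can only assess your proof on its own terms, and it is in fact the correct route.

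Your three steps are the standard ones and they fit together properly: two-scale compactness of $u^\eps$ and $\nabla u^\eps$, identification of the two-scale limit of $u^\eps$ with $u^0(x)$ via strong $L^2$-convergence and Birkhoff, and the test against oscillating solenoidal fields $\varphi(x)\,b(T_{x/\eps}\trandomelement)$ so that the $\eps^{-1}(D\!\cdot\! b)$-term drops. Two points deserve to be made explicit, since they are the places where a blind write-up would be incomplete. First, your conclusion from the identity $\int_\randomspace\int_\Omega(\xi-\nabla u^0)\cdot\varphi\,b=0$ that $\xi(x,\cdot)-\nabla u^0(x)\perp F^2_{sol}(\randomspace)$ for a.e.\ $x$ needs the density of $\stochsmooth(\randomspace)$-valued divergence-free fields in $F^2_{sol}(\randomspace)$; this holds, but it rests on mollification along the group action, $b_\eta(\randomelement)=\int_{\R^n} b(T_y\randomelement)\phi_\eta(y)\,\mathrm{d}y$, which commutes with $D$ and hence preserves the solenoidal constraint. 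Second, on the measurable-selection worry you raise at the end: the paper defines $\stochsobolev^{1,2}(\randomspace)$ as the completion of $W^{1,2}(\randomspace)$ with respect to the seminorm $|\cdot|_{W^{1,2}(\randomspace)}$, whose kernel (by ergodicity) consists exactly of the constants; so $\stochsobolev^{1,2}(\randomspace)$ is already the Hausdorff completion, no further quotient by $\R$ is needed, and $D$ gives a genuine isometric isomorphism $\stochsobolev^{1,2}(\randomspace)\to F^2_{pot}(\randomspace)$. Consequently $u^1(x,\cdot):=D^{-1}\bigl(\xi(x,\cdot)-\nabla u^0(x)\bigr)$ is uniquely determined (not just modulo constants), and measurability plus the $L^2(\Omega)$-bound are inherited directly from $\xi-\nabla u^0\in L^2(\Omega\times\randomspace,\R^n)$ by boundedness of $D^{-1}$, just as you anticipate. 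With those two clarifications your proposal is a complete and correct proof of the lemma.
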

\noindent The next lemma shows that convex/quadratic functionals are compatible with this concept of 
two-scale convergence. A similar statement with proof can be found in \cite{HoVe16}.
\begin{lemma}[Lower-semicontinuity and continuity of quadratic functionals]\label{lem:lsc}
Let $(u^{\eps})$ be a bounded sequence in $L^2(\Omega, \R^n)$ such that 
\ $u^\eps \wtwoscale u^0 \in L^2(\Omega \times \randomspace, \R^n)$.
 Let $Q : \randomspace \times \R^n \to [0, \infty)$ be a measurable map such that for 
 a.e.~$\randomelement \in \randomspace$, $Q(\randomelement, \cdot)$ is a bounded positive semidefinite quadratic form, 
 i.e.~there exists $\alpha > 0$ such that
\[
  \abs { Q(\randomelement, v)}  \leq \alpha \abs v^2\,,\quad \forall v \in \R^n\,. 
\]
Then 
\[
  \lim_{\eps\downarrow0} \int_\Omega Q \paren*{ T_{\eps^{-1} x_1} \trandomelement, u^\eps(x) } \dx
  \geq \int_\Omega \int_\randomspace Q\paren[\big]{ \randomelement, u^0(\randomelement, x) } \drandommeasure\dx.
\]
If additionally $u^\eps \stwoscale u^0$, then 
\[
  \lim_{\eps\downarrow0} \int_\Omega Q \paren*{ T_{\eps^{-1} x_1} \trandomelement, u^k(x) } \dx
  = \int_\Omega \int_\randomspace Q\paren[\big]{ \randomelement, u^0(\randomelement, x) } \drandommeasure\dx.
\]
\end{lemma}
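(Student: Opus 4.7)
The plan rests on the standard convexity trick for quadratic forms combined with the stochastic two-scale toolbox developed in the preceding lemmas. Represent $Q(\randomelement, v) = A(\randomelement) v \cdot v$ with a measurable symmetric matrix $A(\randomelement)$ satisfying $\|A\|_{L^\infty(\randomspace)} \leq \alpha$, and let $B(\randomelement, v, w) = A(\randomelement) v \cdot w$ denote the associated bilinear form. Positive semi-definiteness (equivalently $Q(\randomelement, w-v)\geq 0$) yields the tangent-line inequality
\[
Q(\randomelement, w) \geq Q(\randomelement, v) + 2 B(\randomelement, v, w - v), \qquad v, w \in \R^n,
\]
which I would test pointwise with $w = u^\eps(x)$ against a carefully chosen oscillating $v = \phi^\eps(x)$; the equality under strong two-scale convergence will instead use a direct bilinear argument.

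For the lower-semicontinuity bound I would fix a simple tensor-product $\phi(x, \randomelement) = \sum_{i=1}^N \varphi_i(x) b_i(\randomelement)$ with $\varphi_i \in C_0^\infty(\Omega;\R^n)$ and $b_i \in \stochsmooth(\randomspace)$, set $\phi^\eps(x) := \phi(x, T_{\eps^{-1}x_1}\trandomelement)$, and integrate the tangent inequality over $\Omega$. Three terms arise: the quadratic $\int_\Omega Q(T_{\eps^{-1}x_1}\trandomelement, \phi^\eps)\dx$ expands into finite sums of oscillatory integrals of the $L^1(\randomspace)$-products $A b_i \cdot b_j$ and converges by Birkhoff's theorem (Theorem~\ref{thm:birkhoff}) to $\int_\Omega\!\int_\randomspace Q(\randomelement, \phi)\drandommeasure\dx$; the same argument handles the quadratic-in-$\phi^\eps$ part of the cross term; the linear-in-$u^\eps$ piece $\int_\Omega u^\eps \cdot A(T_{\eps^{-1}x_1}\trandomelement)\phi^\eps \dx$ splits into sums $\int_\Omega u^\eps_k \varphi_{i,l}(x)\,(A_{kl}b_i)(T_{\eps^{-1}x_1}\trandomelement)\dx$, and since $A_{kl} b_i \in L^\infty(\randomspace) \subset L^2(\randomspace)$, weak two-scale convergence of $u^\eps$ combined with the extension lemma for test functions identifies its limit as $\int_\Omega\!\int_\randomspace u^0 \cdot A\phi\, \drandommeasure\dx$. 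Passing to $\liminf$ then delivers, for every simple $\phi$,
\[
\liminf_{\eps\downarrow 0}\int_\Omega Q(T_{\eps^{-1}x_1}\trandomelement, u^\eps)\dx \geq \int_\Omega\!\int_\randomspace \bigl[Q(\randomelement, \phi) + 2 B(\randomelement, \phi, u^0 - \phi)\bigr]\drandommeasure\dx.
\]
Since simple tensor products are dense in $L^2(\Omega\times\randomspace,\R^n)$, taking $\phi_k \to u^0$ in that space and exploiting $|B(\randomelement,v,w)| \leq \alpha|v||w|$ (Cauchy--Schwarz on the cross term, continuity of the quadratic in $L^2(\Omega\times\randomspace)$) passes the right-hand side to $\int_\Omega\!\int_\randomspace Q(\randomelement, u^0)\drandommeasure\dx$.

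For the equality under strong two-scale convergence, I would first verify that $A(T_{\eps^{-1}x_1}\trandomelement)\,u^\eps \wtwoscale A(\randomelement)\,u^0(x,\randomelement)$: a test of the form $\varphi(x) b(T_{\eps^{-1}x_1}\trandomelement)$ with $b \in \stochsmooth(\randomspace)$ can be rewritten by transposition as $\varphi(x)(A^T b)(T_{\eps^{-1}x_1}\trandomelement)$, and since $A^T b \in L^\infty(\randomspace) \subset L^2(\randomspace)$, the $L^2$-extended test-function lemma for weak two-scale convergence of $u^\eps$ delivers the claimed limit. The defining pairing of strong two-scale convergence, applied to $u^\eps \stwoscale u^0$ against $A u^\eps \wtwoscale A u^0$, then immediately gives
\[
\int_\Omega Q(T_{\eps^{-1}x_1}\trandomelement, u^\eps)\dx = \int_\Omega (A u^\eps)\cdot u^\eps\dx \longrightarrow \int_\Omega\!\int_\randomspace Q(\randomelement, u^0)\drandommeasure\dx.
\]
The main obstacle is the careful handling of the oscillatory products with merely $L^\infty(\randomspace)$ coefficients: one must justify that Birkhoff's theorem applies to the products $A b_i \cdot b_j \in L^1(\randomspace)$, that the extension lemma for weak two-scale convergence genuinely accommodates $L^2(\randomspace)$ factors so that $Ab_i$ and $A^T b$ are admissible tests, and that the simple tensor-product test functions are indeed dense in $L^2(\Omega\times\randomspace, \R^n)$. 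None of these is deep in isolation, but threading them together coherently is where the technical care lies.
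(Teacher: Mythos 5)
The paper itself gives no proof of this lemma --- it simply defers to \cite{HoVe16} --- so there is nothing in the text to compare your argument against. That said, your proposal is the standard and correct one: for the lower-semicontinuity bound you use the tangent inequality $Q(\randomelement,w)\geq Q(\randomelement,v)+2B(\randomelement,v,w-v)$ tested against tensor-product approximations $\phi^\eps(x)=\phi(x,T_{\eps^{-1}x_1}\trandomelement)$, pass to the $\liminf$ term by term, and finish by density of $C_0^\infty(\Omega;\R^n)\otimes\stochsmooth(\randomspace)$ in $L^2(\Omega\times\randomspace;\R^n)$; for the equality under strong two-scale convergence you verify $A(T_{\eps^{-1}x_1}\trandomelement)u^\eps\wtwoscale A u^0$ via the $L^2$-extension of test functions and then pair against $u^\eps\stwoscale u^0$. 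All of this is sound. One refinement is worth noting: of the three obstacles you flag at the end, the first (Birkhoff for the $L^1(\randomspace)$ products $A_{kl}b_ib_j$ along the \emph{fixed} typical trajectory) is genuinely delicate, because ``typical'' in this paper is defined only relative to $\stochsmooth(\randomspace)$ and the Birkhoff set depends on $g$. You can avoid it entirely: since $\phi^\eps\wtwoscale\phi$ (by the base definition with $\stochsmooth$ tests), write
\[
\int_\Omega Q\paren[\big]{T_{\eps^{-1}x_1}\trandomelement,\phi^\eps}\dx
=\sum_j\int_\Omega \phi^\eps(x)\cdot\varphi_j(x)\,(A b_j)(T_{\eps^{-1}x_1}\trandomelement)\dx
\]
and invoke the extension-of-test-functions lemma with $b=Ab_j\in L^\infty(\randomspace)\subset L^2(\randomspace)$, exactly as you already do for the cross term; this gives the convergence of the quadratic-in-$\phi^\eps$ piece with no separate appeal to Birkhoff. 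The remaining two points you flag (that the extension lemma accommodates $L^2$ factors, and the density of simple tensor products) are exactly the right things to check, and both are available: the first is stated as a lemma in the paper, and the second follows since $C_0^\infty(\Omega)$ and $\stochsmooth(\randomspace)$ are each dense in the respective $L^2$ spaces and $L^2(\Omega\times\randomspace)$ is their Hilbert tensor product.
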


\subsection{Application in elasticity}
In this subsection we closely follow \cite{Neu10}, where analogous results where derived for the periodic case. 
Since most of the statements can be proved in the same fashion, we will be skipping those.
In the following we work only with one-dimensional dynamical systems $T$, i.e.~$n = 1$. 
We could assume additional microstructure in the cross section (see for instance \cite{MaVe15} 
for the periodic case of bending plate), but for simplicity omit that.

Let $(\eps_h)$ be a sequence of positive numbers, such that $\eps_h \downarrow 0$ for $h \downarrow 0$. 
The random energy density $W^h:\R^3 \times \randomspace \times \R^{3 \times 3} \to [0,+\infty] $ 
is then defined by
\begin{equation}\label{def:randenergy}
  W^h(x, \randomelement, F) = W( T_{\eps^{-1}_h x_1} \randomelement, F)\,,
\end{equation}
where 
\begin{description}
	\item[(S1)]  for a.e.~$\randomelement \in \randomspace$, $W(\randomelement,\cdot)$ is 
				continuous function on $\R^{3\times3}$;
	\item[(S2)] for a.e.~$\randomelement \in \randomspace$, 
				$W(\randomelement,\cdot)\in \mathcal{W}(\alpha,\beta,\varrho,\kappa)$;
	\item[(S3)] there exists a monotone function $r : \R_+ \to (0,+\infty)$ such that $r(\delta)\downarrow 0$ 
	as $\delta\downarrow0$ and 
	\begin{equation}\label{eq:assss}
	\forall G\in\R^{3\times3}\,, 
		\ \forall h>0\ :\ \esssup_{x\in\Omega}|W^h(x,\randomelement,I+G) - Q^h(x,\randomelement,G)| 
		\leq r(|G|)|G|^2\,,
	\end{equation}
	where $Q^h(x,\randomelement,\cdot)$ are quadratic forms defined as in (H4).
\end{description}
 The limiting material properties depend strongly on the relation between $h$ and $\eps_h$, more specifically 
 on $\gamma \in[0,+\infty]$ defined by 
\[
  \gamma := \lim_{ h \downarrow0 } \frac h {\eps_h}\,.
\]
To study the above introduced energies we need Sobolev-type spaces not only
in $\randomspace$, but also on $\randomspace \times \twodomain$. Hence, we define
\[
  W^{1,2}(\randomspace \times \twodomain) 
  := W^{1,2}(\twodomain, L^2(\randomspace)) \cap L^2( \twodomain, W^{1,2}(\randomspace))\,,
\]
equipped with seminorm
\[
  |u|_{W^{1,2}(\randomspace \times \twodomain)}^2 = \norm { D_1 u}^2_{L^2(\randomspace \times \twodomain) } 
  + \norm { \partial_{2} u}^2_{L^2(\randomspace \times \twodomain) }  
  + \norm { \partial_{3} u}^2_{L^2(\randomspace \times \twodomain) }\,.
\]
Similarly as in the purely stochastic Sobolev space, by $\stochsobolev^{1,2}(\randomspace \times \twodomain)$ we denote
the completion of $W^{1,2}(\randomspace \times \twodomain)$ w.r.t.~the seminorm 
$|\cdot|_{W^{1,2}(\randomspace \times \twodomain)}$.
The following statement about stochastic two-scale limit of scaled gradients can be proved as in \cite{HoVe16}.
\begin{lemma}
\label{lem:scaled}
Let $(u^h) \subset W^{1,2}(\Omega, \R^3)$ and $ u^0 \in L^2(\Omega, \R^3)$ such that 
$u^h \to u^0$ strongly in $L^2(\Omega, \R^3)$ and let $(\nabla_h u^h)$ be uniformly bounded 
in $L^2(\Omega, \R^{3 \times 3})$. Then $u^0$ depends only  on $x_1$. 
Moreover,
\begin{enumerate}
\item if $\gamma \in \{ 0, \infty \}$, then there exists
\[
  \begin{cases}
    u^1 \in L^2( (0,L), (\stochsobolev^{1,2}( \randomspace ))^3) 
    \text{ and } u^2 \in L^2 ( (0,L) \times \randomspace, W^{1,2}(\twodomain, \R^3) )\,, & \text{  } \gamma = 0\,, \\
    u^1 \in L^2( \Omega, (\stochsobolev^{1,2}( \randomspace ))^3) 
    \text{ and } u^2 \in L^2 ( I , W^{1,2}(\twodomain, \R^3) )\,, & \text{ } \gamma = \infty\,,
  \end{cases}
\]
and
\[
  \nabla_h u^h \wtwoscale (\partial_1 u^0 + D_1 u^1\, |\, \nabla_{x'} u^2 )\,.
\]
\item If $\gamma \in (0, \infty)$, then there is a subsequence (not relabeled) and a 
function $u^1 \in L^2((0,L), \stochsobolev^{1,2}(\randomspace \times \twodomain, \R^3))$ such that
\[
\nabla_h u^h \wtwoscale (\partial_{1} u^0 + D_1 u^1 \,|\, \frac 1 \gamma\nabla_{x'}u^1)\,.
\]
\end{enumerate}
\end{lemma}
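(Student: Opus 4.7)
The plan is to follow the two-scale compactness strategy for scaled gradients of \cite{Neu10}, adapted to the stochastic framework as in \cite{HoVe16}. First I would verify that $u^0$ depends only on $x_1$: the uniform $L^2$-bound on $\nabla_h u^h$ gives $\partial_{x_j}u^h = h(\nabla_h u^h)_{\cdot j}\to 0$ strongly in $L^2(\Omega)$ for $j=2,3$, and passing to the distributional limit against $u^h\to u^0$ forces $\partial_{x_j}u^0 = 0$. Applying Lemma~\ref{lem:compactness} componentwise, I extract on a non-relabelled subsequence a two-scale limit $\xi\in L^2(\Omega\times\randomspace,\R^{3\times 3})$ of $\nabla_h u^h$. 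Since $\partial_1 u^h\weakly \partial_1 u^0$ in $L^2(\Omega)$, averaging in $\randomelement$ yields $\int_\randomspace \xi_{\cdot 1}\,\drandommeasure = \partial_1 u^0\, e_1$, so $\xi_{\cdot 1}$ carries only zero-mean stochastic corrections, while $\xi_{\cdot 2},\xi_{\cdot 3}$ encode the cross-section behaviour.

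For the intermediate regime $\gamma\in(0,\infty)$ the identification is obtained by testing against vector fields that couple the slow, stochastic and cross-sectional scales, for instance
\begin{equation*}
\phi^h(x,\randomelement)=\eps_h\,\zeta(x_1,x')\,b(T_{\eps_h^{-1}x_1}\randomelement)
\end{equation*}
with $\zeta\in C_0^\infty((0,L)\times\overline\twodomain,\R^3)$ and $b\in\stochsmooth(\randomspace)$. Integrating by parts against each entry of $\nabla_h u^h$ and letting $h\downarrow 0$ with $h/\eps_h\to\gamma$, the derivative $D_1 b$ couples to the first column of $\xi$ while $\partial_{x_j}\zeta$ couples to $\xi_{\cdot j}$ through a factor $\gamma$. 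The resulting orthogonality relations force $\xi-\partial_1 u^0\,e_1\otimes e_1$ to annihilate every test field which is divergence-free in the combined micro variable $(\randomelement,x')$, so a product Hodge-type decomposition (combining the purely stochastic splitting of \cite[Proposition A.9]{DuGl15} with the classical deterministic one on $\twodomain$) produces $u^1\in L^2((0,L),\stochsobolev^{1,2}(\randomspace\times\twodomain,\R^3))$ satisfying $\xi=(\partial_1 u^0 + D_1 u^1\,|\,\tfrac{1}{\gamma}\nabla_{x'}u^1)$.

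For the degenerate regimes $\gamma\in\{0,\infty\}$ I would use iterated two-scale arguments in which only one microscale is active at a time. When $\gamma=\infty$ the stochastic oscillation is much faster than the cross-section, so test functions independent of $x'/h$ isolate a pointwise-in-$x'$ stochastic corrector $u^1\in L^2(\Omega,\stochsobolev^{1,2}(\randomspace)^3)$ from the $\partial_1$-column, after which test functions independent of the stochastic variable identify the residual deterministic cross-section corrector $u^2\in L^2(I,W^{1,2}(\twodomain,\R^3))$ in $\xi_{\cdot 2},\xi_{\cdot 3}$. When $\gamma=0$ the order is reversed: the cross-section resolves first, producing $u^2\in L^2((0,L)\times\randomspace,W^{1,2}(\twodomain,\R^3))$ with surviving stochastic dependence because the material is locally constant across each cross-section at this scale, and then a stochastic corrector $u^1\in L^2((0,L),\stochsobolev^{1,2}(\randomspace)^3)$ independent of $x'$ is extracted from the $\partial_1$-column.

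The hard part will be the Hodge-type decomposition in the product space $\stochsobolev^{1,2}(\randomspace\times\twodomain)$ for the mixed regime, since the incompleteness of $W^{1,2}(\randomspace)$ under the seminorm (noted in the text) rules out any direct Poincar\'e or compactness argument on the stochastic factor. One has to characterize the potential subspace intrinsically as the $L^2$-closure of $\{(D_1\chi,\nabla_{x'}\chi):\chi\in\stochsmooth(\randomspace)\otimes C^\infty(\overline\twodomain)\}$, show that its orthogonal complement consists of pairs whose stochastic $D_1$- and deterministic $\diver_{x'}$-divergences both vanish (using ergodicity via Theorem~\ref{thm:birkhoff} and the density of $\stochsmooth(\randomspace)$ in $L^2(\randomspace)$), and then pass to the completion. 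Once this is in place, each of the three cases reduces to a routine two-scale limit passage compatible with Lemma~\ref{lem:lsc}.
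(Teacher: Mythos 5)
The paper itself offers no proof of this lemma; it states only that it ``can be proved as in \cite{HoVe16}'' (a reference listed as in preparation at the time), so there is no in-text argument for you to match. That said, your plan is the expected adaptation of the periodic argument of \cite{Neu10} to the stochastic setting, and the decisive step you flag --- the Helmholtz/Weyl-type decomposition in the product space $\stochsobolev^{1,2}(\randomspace\times\twodomain)$ --- is indeed where the work lies: one must identify the closed potential subspace of $L^2(\randomspace\times\twodomain)^3$ with the image of the completion, so that the orthogonality relations against scaled-divergence-free microfields actually produce a corrector $u^1$ in $\stochsobolev^{1,2}(\randomspace\times\twodomain)$, not merely an $L^2$-limit of gradients with no underlying function.

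Two points where your sketch would need to be tightened before it closes. First, the test field $\phi^h=\eps_h\,\zeta(x_1,x')\,b(T_{\eps_h^{-1}x_1}\trandomelement)$ as written yields only a trivial identity after integration by parts: since $u^0$ is deterministic and depends solely on $x_1$, $\int_\randomspace D_1b\,\drandommeasure=0$, and $\int_\twodomain\diver_{x'}\zeta'\,\dx'=0$ (once the normal-trace boundary terms are killed), both resulting terms vanish for every admissible $\zeta$ and $b$, so this family alone does not constrain $\xi$. What the argument actually requires is to test row-by-row against fields of the form $\phi^h=\zeta(x_1)\,\Phi(x',T_{\eps_h^{-1}x_1}\trandomelement)$ \emph{without} the $\eps_h$ prefactor, with $\Phi$ chosen so that $D_1\Phi_1+\tfrac1\gamma\diver_{x'}(\Phi_2,\Phi_3)=0$ and $\Phi\cdot n=0$ on $\partial\twodomain$, and then to deal with the mismatch between $\eps_h/h$ and $1/\gamma$ (either take $\eps_h=h/\gamma$, which is no loss of generality, or absorb the defect in a vanishing error term). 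Second, a notational slip: the identification $\int_\randomspace\xi_{\cdot1}\,\drandommeasure=\partial_1u^0$ is correct, but your ``$\partial_1u^0\,e_1$'' and the subsequent ``$\xi-\partial_1u^0\,e_1\otimes e_1$'' are off; since $u^0:(0,L)\to\R^3$, the object to subtract is the matrix $\partial_1u^0\otimes e_1$ (first column $\partial_1u^0$, the other two zero). These are repairable, and once repaired the remaining structure --- including the iterated two-scale treatment of the degenerate regimes $\gamma\in\{0,\infty\}$ --- is as you describe.
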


\subsection{Cell formula}

\begin{definition}\label{def:cell}
For a.e.~$\randomelement\in\randomspace$ let $Q(\randomelement,\cdot)$ be quadratic a form associated to the
energy density $W(\randomelement,\cdot)$. For every $\varrho\in\R$ and $\Psi\in\R^{3\times3}_{\skw}$, 
define the mapping $Q_\gamma^0:\R\times\R^3\to\R$ by
\begin{align*}	
&Q^0_{\gamma}(\varrho, \axl \Psi )\\
&\quad := \begin{cases}
    \inf\int_{\randomspace}\int_{\twodomain}  Q\paren[\big]
    {\randomelement, \iota(\varrho e_1 + \Psi \projyz + ( D_1 \Psi^1)\projyz) 
    + (D_1 \vartheta^1\, |\, \nabla_{x'} \vartheta^2)} \dx' \drandommeasure\,, & \gamma = 0\,; \\
    \inf\int_{\randomspace}\int_{\twodomain}  
    Q\paren[\big]{\randomelement, \iota(\varrho e_1 +\Psi \projyz) +  
    \paren[\big]{ D_1 \vartheta^1\, |\, \frac 1 \gamma \nabla_{x'} \vartheta^1}} \dx' \drandommeasure\,, 
     & 0 < \gamma < \infty\,; \\
    \inf\int_{\randomspace}\int_{\twodomain}  
    Q\paren[\big]{\randomelement, \iota(\varrho e_1 + \Psi \projyz) +  (D_1 \vartheta^1\, |\, \nabla_{x'} \vartheta^2)} \dx' \drandommeasure\,, 
    & \gamma =  \infty\,,
        \end{cases}
  \end{align*}
where the infimum is taken over all $\Psi^1, \vartheta^1, \vartheta^2$ statisfying: 
$\Psi^1 \in \stochsobolev^{1,2}(\randomspace)^{3\times3}_{\skw}$, 
\[
  \vartheta^1 \in\begin{cases}
                   \stochsobolev^{1,2}(\randomspace)^3\,, & \gamma = 0\,, \\
                   \stochsobolev^{1,2}(\randomspace \times \twodomain)^3\,, & 0 < \gamma < \infty\,, \\
                   L^2( \twodomain, \stochsobolev^{1,2}(\randomspace )^3)\,, & \gamma = \infty\,,
                  \end{cases}\quad \text{ and } \quad
  \vartheta^2 \in\begin{cases}
                   L^2( \randomspace, W^{1,2}(\twodomain, \R^3))\,, & \gamma = 0\, ,\\
                   W^{1,2}(\twodomain, \R^3)\,, & \gamma = \infty\,.
                  \end{cases}
\]
\end{definition}
\begin{proposition}
Let $(W^h)$ be a family of energy densities describing a random material for rods defined by (\ref{def:randenergy}).
Then the limit energy density $Q^0$, defined in (\ref{matt1}), is given by $Q^0_{\gamma}$ 
from Definition \ref{def:cell}.
\end{proposition}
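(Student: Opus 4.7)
The strategy is the standard two-sided matching for a cell formula, carried out in the stochastic two-scale framework of the previous subsection. By Proposition~\ref{prop:intrep} together with the localization property of $\Kh$ recorded after Lemma~\ref{lemma:key}, it suffices to identify the pointwise integrand, so I would test against constant $\varrho\in\R$ and constant $\Psi\in\R^{3\times3}_{\skw}$ (putting $m=\varrho e_1+\Psi\projyz$) and evaluate $\Kh(m,(0,L))$; ergodicity of $T$ makes the result independent of $x_1$ and reduces the claim to $\Kh(m,(0,L))=L\,Q^0_\gamma(\varrho,\axl\Psi)$.

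For the lower bound, I would take any admissible $(\psi^h)$ in the infimum defining $\Kh$ and invoke the Griso-type decomposition from Lemma~\ref{lemma:key}(b) to write $\sym\nabla_h\psi^h=\sym\iota((\Psi^h)'\projyz)+\sym\nabla_h\vartheta^h$ with $\Psi^h,\vartheta^h\to0$ in $L^2$ and uniform bounds $\|\Psi^h\|_{H^1}+\|\nabla_h\vartheta^h\|_{L^2}\leq C$. Stochastic two-scale compactness (Lemmas~\ref{lem:compactness} and~\ref{lem:scaled}) then provides a two-scale limit of $\nabla_h\vartheta^h$ whose structural form depends on the regime: for $\gamma=0$ it splits as $(\partial_1\vartheta^0+D_1\vartheta^1\,|\,\nabla_{x'}\vartheta^2)$; for $\gamma\in(0,\infty)$ the $x'$- and $\randomelement$-dependences couple into a single corrector $\vartheta^1\in L^2((0,L),\stochsobolev^{1,2}(\randomspace\times\twodomain)^3)$; and for $\gamma=\infty$ the roles of $\randomelement$ and $x'$ swap. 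The one-dimensional instance of Lemma~\ref{lem:scaled} applied to $(\Psi^h)'$ produces the two-scale limit $D_1\Psi^1$ with $\Psi^1\in\stochsobolev^{1,2}(\randomspace)^{3\times3}_{\skw}$. The semicontinuity half of Lemma~\ref{lem:lsc}, applied to $Q(\randomelement,\cdot)$ composed with $\iota(m)+\sym\iota(D_1\Psi^1\projyz)+(\text{corrector part})$, yields a $\liminf$-inequality whose right-hand side, after taking the infimum over all admissible correctors, is exactly $L\,Q^0_\gamma(\varrho,\axl\Psi)$.

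For the upper bound, I would fix near-minimizers $\Psi^1,\vartheta^1,\vartheta^2$ of the cell formula (after approximating them by stochastically smooth fields, using density of $\stochsmooth(\randomspace)$ in $\stochsobolev^{1,2}(\randomspace)$ and mollification in $x'$) and build a recovery sequence by the standard unfolding ansatz appropriate to each regime. Roughly, $\psi^h$ combines a skew-symmetric macro-scale contribution obtained by integrating $D_1\Psi^1$ along a typical trajectory $x_1\mapsto T_{\eps_h^{-1}x_1}\trandomelement$ with cross-sectional correctors built from $\vartheta^1$ (and $\vartheta^2$ when $\gamma\in\{0,\infty\}$), evaluated at the fast variable and at the appropriately rescaled cross-sectional variable. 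A direct two-scale computation, combined with the strong-convergence half of Lemma~\ref{lem:lsc}, shows that $\sym\nabla_h\psi^h$ strongly stochastically two-scale converges to the intended target field, hence the energies converge to the cell integrand; the prefactors $\eps_h$ and $h$ enforce the admissibility $(\psi_1^h,h\psi_2^h,h\psi_3^h)\to0$, while skew-symmetry of $\Psi^1$ together with the mean-zero subspace $\B$ from Theorem~\ref{thm:comp2} for the cross-sectional correctors secures $\tw(\psi_2^h,\psi_3^h)\to0$.

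The principal obstacle is handling the three regimes $\gamma=0$, $\gamma\in(0,\infty)$, and $\gamma=\infty$ on an equal footing, since the admissible corrector spaces change qualitatively and the incompleteness of $\stochsobolev^{1,2}$ forces one to work with approximating sequences in $\stochsmooth$ rather than exact minimizers. A subsidiary but nontrivial point is securing the twist constraint for the recovery sequence in the coupled case $\gamma\in(0,\infty)$, where the joint $\randomelement$-and-$x'$ corrector $\vartheta^1$ cannot be freely split between the macro and micro contributions and must instead be projected onto the twist-free subspace before the recovery ansatz is made.
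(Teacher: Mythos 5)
Your overall strategy matches the paper's: lower bound by decomposing the minimizing sequence via Lemma~\ref{lemma:key}(b), passing to stochastic two-scale limits of $(\Psi^h)'$ and $\nabla_h\vartheta^h$, and invoking the lower-semicontinuity half of Lemma~\ref{lem:lsc}; upper bound by building a recovery sequence from near-minimizers of the cell problem and using the continuity half of Lemma~\ref{lem:lsc} along strong two-scale convergence. The localization to a Lebesgue point of $x_1$ rather than working on all of $(0,L)$ is a cosmetic difference. However, there is a real gap in your lower bound: for $0<\gamma<\infty$ (and $\gamma=\infty$) the cell formula $Q^0_\gamma$ infimizes only over $\vartheta^1$, whereas your two-scale decomposition of the minimizing sequence produces an extra free macroscopic corrector $\Psi^1\in\stochsobolev^{1,2}(\randomspace)^{3\times3}_{\skw}$. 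Allowing $\Psi^1$ to vary freely only lowers the infimum, so a priori your $\liminf$-inequality gives $\geq$ something that could be strictly smaller than $Q^0_\gamma$. One must show that the contribution $\sym\iota(D_1\Psi^1\projyz)$ can be reproduced as $\sym\paren[\big]{D_1\widetilde\vartheta^1\,|\,\frac1\gamma\nabla_{x'}\widetilde\vartheta^1}$ for a suitable $\widetilde\vartheta^1\in\stochsobolev^{1,2}(\randomspace\times\twodomain)^3$; the paper does this by an explicit construction
\begin{equation*}
  \widetilde\vartheta^1(\randomelement,x')=
  \Big(\Psi^1_{12}x_2+\Psi^1_{13}x_3\,,\,
       -\tfrac1\gamma\widehat{\Psi^1_{12}}+\Psi^1_{23}x_3\,,\,
       -\tfrac1\gamma\widehat{\Psi^1_{13}}-\Psi^1_{23}x_2\Big)\,,
\end{equation*}
where $\widehat{\cdot}$ denotes a primitive in the stochastic direction. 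Without this absorption identity the lower and upper bounds do not meet the same formula. (For $\gamma=0$ the cell formula retains $\Psi^1$ explicitly, so no absorption is needed there.)

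Your anticipated obstacle concerning the twist constraint in the recovery step is, in fact, a non-issue and suggests a misreading of the scalings. Take $\vartheta^1\in W^{1,2}(\randomspace\times\twodomain)^3$ a near-minimizer (density of $W^{1,2}$ in $\stochsobolev^{1,2}$ is all you need, rather than $\stochsmooth$) and set $\vartheta^h(x_1,x')=\tfrac{h}{\gamma}\vartheta^1(T_{\eps_h^{-1}x_1}\trandomelement,x')$. Then $\vartheta^h_1$ and $h\vartheta^h_i$ ($i=2,3$) are $O(h)$ and $O(h^2)$ respectively in $L^2(\Omega)$, and $\tw(\vartheta^h_2,\vartheta^h_3)=\tfrac{h}{\gamma}\int_\omega(x_2\vartheta^1_3-x_3\vartheta^1_2)(T_{\eps_h^{-1}x_1}\trandomelement,x')\dd x'$ is $O(h)$ in $L^2(0,L)$ because $\vartheta^1\in L^2$ along a typical trajectory. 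Hence all admissibility conditions, including the twist constraint, are satisfied automatically by the $h$-prefactor; no projection onto a twist-free subspace is required.
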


\begin{proof} 
We only prove the result for $0 < \gamma < \infty$. The other two cases are very similar.
Using the previous general homogenization result it suffices to prove that 
for $m=m(\varrho,\Psi)= \varrho e_1 + \Psi \projyz$ it holds
\[
  \lim_{r \downarrow 0} \paren[\Big]{\frac 1 {2r}\Kh(m, x_1^0 + (-r,r) )} = Q^0_{\gamma} ( \varrho, \axl \Psi )\,,
\]
for all  $\varrho\in \R$ and $\Psi\in \R_{\skw}^{3\times3}$, for every Lebesgue point $x_1^0$, where $\Kh$ 
is given by \eqref{2:eq.minseq}. By Lemma~\ref{lemma:key}, for given $m$, there exist sequences of functions 
$(\Psi^{h})\subset H^1((0,L),\R^{3\times3}_{\skw})$ and $(\vartheta^{h})\subset H^1(\Omega,\R^3)$, 
with properties stated there, such that 
\begin{align*}	
    \lim_{r \downarrow 0} &\paren[\Big]{\frac 1 {2r}\Kh(m, x_1^0 + (-r,r) )} = \\
    &\lim_{r \downarrow 0} \frac 1 {2r} \lim_{h \downarrow 0} \int_{(x_1^0 + (-r,r)) 
    \times \twodomain} Q\paren[\Big]{ T_{\eps^{-1} x_1} 
    \trandomelement, \iota(m) + \sym \iota ( (\Psi^{h})' \projyz ) + \sym \nabla_h \vartheta^{h} }\dx\,.
\end{align*}
 Using the lower-semicontinuity of quadratic functionals with respect to the stochastic 
 two-scale convergence we obtain
\begin{align*}	
    \lim_{r \downarrow 0} &\paren[\Big]{\frac 1 {2r}\Kh(m, x_1^0 + (-r,r) )}  \\
    &=\lim_{r \downarrow 0} \frac 1 {2r} \lim_{h \downarrow 0} \int_{(x_1^0 + (-r,r)) 
    \times \twodomain} Q\paren[\Big]{ T_{\eps^{-1} x_1} \trandomelement, 
    \iota(m) + \sym ( (\Psi^{h})' \projyz ) + \sym \nabla_h \vartheta^{h} }\dx \\
    &\geq \lim_{r \downarrow 0} \frac 1 {2r}\inf_{U} \int_{(x_1^0 + (-r,r)) }
    \int_{\randomspace\times \twodomain} Q\paren[\Big]{  \randomelement, \iota(m) +  U }\drandommeasure\dx\,,
\end{align*}
where the infimum is taken over all possible two-scale limits of 
\[
  \sym \iota( (\Psi^{h})' \projyz ) + \sym \nabla_h \vartheta^{h}\, ,
\]
i.e. 
\[
 \left \{  \sym \iota( D_1 \Psi^1 \projyz ) + \sym \frac1\gamma\nabla_{x'} \vartheta^1 \ : 
 \ \Psi^1 \in \stochsobolev^{1,2}(\randomspace)^{3\times3}_{\skw}\,,\, 
 \vartheta^1\in\stochsobolev^{1,2}(\randomspace \times \twodomain)^3 \right\}\,.
\]
Notice that the first term can be absorbed into the second one. To show this we define $\widetilde \vartheta^1$ by
\[
  \widetilde \vartheta^1(\randomelement, x') := 
  \begin{pmatrix}
    \Psi_{12}(\randomelement) x_2 + \Psi_{13}(\randomelement) x_3 \\
    -\frac 1 \gamma \widehat\Psi_{12}(\randomelement)  +\Psi_{23}(\randomelement) x_3 \\
    -\frac 1 \gamma \widehat\Psi_{13}(\randomelement)  -\Psi_{23}(\randomelement) x_2 
  \end{pmatrix},
\]
where $\widehat \cdot$ denotes the primitve of the function. A short calculation reveals that
\[
  \sym (\frac1\gamma \nabla_{x'} \widetilde\vartheta^1) = \sym ( (D_1 \Psi) \projyz)\,.
\]
Therefore, the set of weak stochastic two-scale limits is given by
\[
 \left \{   \frac1\gamma \nabla_{x'}  \vartheta^1 \ : \  
 \vartheta^1\in(\stochsobolev^{1,2}(\randomspace \times \twodomain))^3 \right\}\,.
\]
Hence, we deduce
\begin{align*}	
    \lim_{r \downarrow 0} \paren[\Big]{\frac 1 {2r}\Kh(m, x_1^0 + (-r,r) )}    \geq Q^0_{ \gamma}( \varrho,\axl\Psi )\,.
\end{align*}
For the reverse inequality we fix  $\varrho, \Psi$, and 
let $\vartheta^1 \in (W^{1,2}(\randomspace \times \twodomain))^3$  
be such that
\begin{align*}	
\int_{\randomspace}\int_{\twodomain}  Q\paren[\Big]{\randomelement, 
\iota( \varrho e_1 + \Psi \projyz) +  \paren[\big]{ D_1 \vartheta^1\, |\, \frac 1 \gamma \nabla_{x'} \vartheta^1}} \dx' \drandommeasure 
 \leq \eps +  Q^0_{\gamma}(  \varrho, \axl\Psi )\,.
\end{align*}
Defining
\[
  \vartheta^h(x_1, x') = \frac h \gamma \vartheta^1 ( T_{\eps^{-1}x_1} \trandomelement, x')\,,
\]
we observe
\[
   \sym \paren[\big]{\nabla_h \vartheta^h }  \stwoscale  
   \sym \paren[\Big]{ D_1 \vartheta^1 \,|\, \frac 1 \gamma \nabla_{x'} \vartheta^1}\, .
\]
By continuity of quadratic functions w.r.t.~stochastic two-scale convergence we have
\begin{align*}
 &\eps +  Q^0_{\gamma}(  \varrho, \axl\Psi) \\
&\quad\geq  \int_{\randomspace}\int_{\twodomain}  Q\paren[\Big]{\randomelement, \iota( \varrho e_1 + \Psi \projyz) 
+ \sym \paren[\big]{ D_1 \vartheta^1 \,|\, \frac 1 \gamma \nabla_{x'} \vartheta^1}} \dx' \drandommeasure \\
&\quad=\lim_{r \downarrow 0} \frac 1 {2r}\int_{x_1 + (-r,r)} \lim_{h \downarrow 0}  
\int_{\twodomain}  Q\paren[\Big]{T_{\eps^{-1}x_1} \trandomelement, \iota( \rho e_1 + \Psi \projyz) 
+  \sym \paren[\big]{\nabla_h\vartheta^h }} \dx' \drandommeasure\,,
\end{align*}
which finishes the proof.
\end{proof} 

\section*{Appendix}

\setcounter{theorem}{0}
\renewcommand{\thesection}{A}

\begin{lemma}\label{lema:fmp1}
Let $p>1$, $\Omega\subset\R^d$ open, bounded set and $(u_k)\subset W^{1,p}(\Omega,\R^m)$ a bounded sequence 
such that $(|\nabla u_k|^p)$ is equi-integrable. Let $(s_k)_k$ be an increasing sequence of positive reals 
such that $s_k\to+\infty$ for
$k\to+\infty$. Then there exists a subsequence still denoted by $(u_k)$ and 
a sequence $(z_k)\subset W^{1,\infty}(\Omega,\R^m)$ satisfying: 
$|z_k \neq u_k| \to 0$ as $k\to+\infty$, 
$(|\nabla z_k|^p)$ is equi-integrable and $\|z_k\|_{W^{1,\infty}} \leq C s_k$ for some 
$C>0$ depending only on dimension $d$.
\end{lemma}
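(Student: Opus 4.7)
The plan is to invoke the classical Lipschitz truncation machinery based on the Hardy--Littlewood maximal operator, essentially following Acerbi--Fusco and Fonseca--M\"uller--Pedregal. The key analytic ingredient is the pointwise Poincar\'e-type inequality
\[
|u(x) - u(y)| \leq C_d\,|x-y|\bigl(M|\nabla u|(x) + M|\nabla u|(y)\bigr),
\]
valid at every pair of Lebesgue points of $u\in W^{1,p}(\R^d,\R^m)$. First I would extend each $u_k$ to a function on $\R^d$ with compact support, using a bounded extension operator for $W^{1,p}(\Omega)$ followed by multiplication with a smooth cutoff; this preserves the uniform $W^{1,p}$ bound and the equi-integrability of the gradients (up to a harmless factor).

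Next, for each $k$ I would define the ``good set''
\[
E_k := \{x\in\R^d : M|\nabla u_k|(x) \leq s_k\}\cap(\text{Lebesgue points of }u_k).
\]
By the pointwise inequality, $u_k|_{E_k}$ is $2C_d s_k$-Lipschitz, so McShane's extension (applied componentwise) yields $z_k\in W^{1,\infty}(\R^d,\R^m)$ with $z_k = u_k$ on $E_k$, $\|\nabla z_k\|_{L^\infty}\leq C s_k$, and (using that a fixed proportion of $E_k$ has $|u_k|\leq s_k$ by Chebyshev applied to the bounded sequence $(u_k)$) $\|z_k\|_{L^\infty}\leq C s_k$ as well, since $\Omega$ is bounded. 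Restricting back to $\Omega$ gives the required candidate.

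For $|\{u_k\neq z_k\}|\to 0$, I split $|\nabla u_k| = g_k + b_k$ with $g_k := |\nabla u_k|\chi_{\{|\nabla u_k|\leq s_k/2\}}$, so $M|\nabla u_k|\leq s_k/2 + Mb_k$ and hence
\[
\{u_k\neq z_k\}\subset \{M|\nabla u_k|>s_k\}\subset \{Mb_k>s_k/2\}.
\]
The strong $(p,p)$ bound for $M$ gives $|\{Mb_k>s_k/2\}| \leq Cs_k^{-p}\|b_k\|_p^p \leq Cs_k^{-p}\int_{\{|\nabla u_k|>s_k/2\}}|\nabla u_k|^p$. Since $s_k\to\infty$ and $(|\nabla u_k|^p)$ is equi-integrable, the right-hand side tends to zero.

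The main obstacle is equi-integrability of $(|\nabla z_k|^p)$. Writing, for a measurable $A\subset\Omega$,
\[
\int_A|\nabla z_k|^p \leq \int_A|\nabla u_k|^p + (Cs_k)^p\,|A\setminus E_k|,
\]
the first term is controlled uniformly by the hypothesis on $(|\nabla u_k|^p)$. The difficulty lies in the second term, where $s_k^p$ blows up while $|A\setminus E_k|$ is only small in a delicate, non-uniform way. I would handle this by replacing the naive bound with $(Cs_k)^p|A\setminus E_k|\leq C\int_{A\cap\{Mb_k>s_k/2\}}(Mb_k)^p$ and invoking the fact that the Hardy--Littlewood maximal operator preserves equi-integrability on $L^p$ (which can be proved via Dunford--Pettis together with the strong $(p,p)$ bound), combined with the equi-integrability of $b_k^p$ along the subsequence. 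Passing to a suitable subsequence via a diagonal / Chacon biting argument then yields the required equi-integrability of $(|\nabla z_k|^p)$, completing the proof.
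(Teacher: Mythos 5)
Your overall strategy is the same as the paper's: pass to $\R^d$, cut at level $s_k$ using the Hardy--Littlewood maximal function, replace $u_k$ by a Lipschitz truncation $z_k$ with $\|z_k\|_{W^{1,\infty}}\lesssim s_k$ that agrees with $u_k$ off a set whose measure is controlled via the weak/strong maximal inequality, and then exploit the cancellation $s_k^p\cdot s_k^{-p}$ to transfer equi-integrability from $(|\nabla u_k|^p)$ to $(|\nabla z_k|^p)$. This is exactly the FMP98/FJM02 machinery the paper invokes (\cite[Lemma~4.1]{FMP98}, \cite[Proposition~A2]{FJM02}). Two points, however, deserve attention.

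\textbf{A genuine gap.} Your very first step, ``extend each $u_k$ to $\R^d$ \ldots using a bounded extension operator for $W^{1,p}(\Omega)$,'' is not available under the stated hypotheses: $\Omega$ is only assumed open and bounded, and such sets need not be $W^{1,p}$-extension domains (a slit disk or a domain with an outward cusp already fails). The paper is aware of this and splits the proof into two steps: Step~1 treats the extension-domain case exactly as you describe; Step~2 reduces the general case to it by choosing a compactly exhausting family $\Omega_l\Subset\Omega$ with $|\Omega\setminus\Omega_l|\to0$, cut-off functions $\zeta_l\in C_0^\infty(\Omega,[0,1])$ with $\zeta_l\equiv1$ on $\Omega_l$, passing to the weak limit $u$ of $(u_k)$, applying Step~1 to the compactly supported functions $\zeta_l(u_k-u)$, and diagonalizing in $(k,l)$. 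Without this reduction your argument proves a strictly weaker statement.

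\textbf{Unnecessary complication in the final step.} You flag the bound $(Cs_k)^p|A\setminus E_k|$ as the ``main obstacle'' and propose to handle it with a maximal-function equi-integrability lemma, Dunford--Pettis, and a Chacon biting subsequence. None of this is needed, and it suggests you are not seeing the clean cancellation. With $b_k=|\nabla u_k|\chi_{\{|\nabla u_k|>s_k/2\}}$ you have, uniformly in the measurable set $A$,
\begin{equation*}
(Cs_k)^p\,|A\setminus E_k|
\;\leq\; C'\!\int_{A\cap\{Mb_k>s_k/2\}} (Mb_k)^p
\;\leq\; C'\,\|Mb_k\|_{L^p}^p
\;\leq\; C''\,\|b_k\|_{L^p}^p
\;=\;C''\!\int_{\{|\nabla u_k|>s_k/2\}}|\nabla u_k|^p ,
\end{equation*}
and the rightmost quantity tends to $0$ as $k\to\infty$ because $s_k\to\infty$ and $(|\nabla u_k|^p)$ is equi-integrable. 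So the contribution of $A\setminus E_k$ vanishes \emph{independently of $A$}, and the finitely many remaining indices are handled by shrinking $|A|$ as usual. This is precisely how the paper closes the argument (there the bound reads $\int_{\mathcal S_k^c}|\nabla z_k|^p\leq \|\nabla z_k\|_{L^\infty}^p|\mathcal S_k^c|\leq C\int_{\{|u_k|+|\nabla u_k|>s_k/2\}}(|u_k|+|\nabla u_k|)^p\to0$). No biting lemma, no Dunford--Pettis, no extraction of a further subsequence at this stage.

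Aside from those two issues, your construction of $E_k$, the use of the pointwise Poincar\'e inequality plus McShane extension (which is what \cite[Lemma~4.1]{FMP98} packages), and the Chebyshev estimate for $|\{u_k\neq z_k\}|\to0$ all match the paper's Step~1.
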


\begin{proof}
The proof is implicitly contained in the proof of Lemma 1.2 (decomposition lemma) from \cite{FMP98},
but we include it here for reasons of completeness.   
As in \cite{FMP98}, the proof is divided into two steps. In the first we assume that $\Omega$ 
is an extension domain, while in the second we remove this restriction generalizing 
the statement for an arbitrary open set.

{\em Step 1.} Let $\Omega\subset \R^d$ be an extension domain, i.e.~an open, bounded set for which there exists an
extension operator $T_\Omega : W^{1,p}(\Omega,\R^m)\to W^{1,p}(\R^d,\R^m)$ satisfying:
\begin{equation*}
T_\Omega u = u\quad\text{on }\Omega\,,\qquad \|T_\Omega u\|_{W^{1,p}}\leq C\|u\|_{W^{1,p}}\,.
\end{equation*}
In the following we identify the sequence $(u_k)\subset W^{1,p}(\Omega,\R^m)$ with its 
extension sequence $(T_\Omega u_k)\subset W^{1,p}(\R^d,\R^m)$. Let us introduce the Hardy-Littlewood maximal
function
\begin{equation}\label{def:HLmf}
M(u)(x):=\sup_{r>0}\frac{1}{|B(x,r)|}\int_{B(x,r)}|u(y)|\dd y\,,
\end{equation}
defined for any Borel measurable function $u:\R^d\to \R^m$. It is known that for $p>1$ and $u\in W^{1,p}(\R^d,\R^m)$,
\begin{equation}\label{eq:Mbound}
\|M (u)\|_{L^p} + \|M(\nabla u)\|_{L^p} \leq C\|u\|_{W^{1,p}}\,.
\end{equation}
According to \cite[Lemma 4.1]{FMP98} (cf.~\cite{EG92}), for every $k\in\N$, there exists 
$z_k\in W^{1,\infty}(\R^d,\R^m)$ such that $u_k = z_k$ on the set $\set S_k := \{M(\nabla u_k)(x) < s_k\}$ and
$\|z_k\|_{W^{1,\infty}}\leq C s_k$, where $C>0$ depends only on $d$. 
Using the argument as in the proof of \cite[Proposition A2.]{FJM02}, 
we obtain an estimate on the Lebesgue measure of the complement of set $\set S_k$,
\begin{equation}\label{app.eq:skc}
|\set S_k^c|\leq \frac{C}{s_k^p}\int_{\{|u_k| + |\nabla u_k| > s_k/2\}} (|u_k| + |\nabla u_k|)^p\dd x \,,
\quad\text{for all }\,k\in\N\,.
\end{equation}
The strong convergence of $(u_k)$ and the equi-integrability property of $(|\nabla u_k|^p)$
imply that $|\set S_k^c| = |u_k\neq z_k|\to0$ as $k\to\infty$. 
Let $A\subset \R^d$ be a bounded open subset, then
due to the fact that 
$\{u_k = z_k\} = \{u_k = z_k\,, \nabla u_k = \nabla z_k\}$, up to a set of the Lebesgue measure zero,
 we have 
 \begin{equation}
 \int_A|\nabla z_k|^p\dd x = \int_{A\cap \set S_k} |\nabla u_k|^p\dd x 
 + \int_{A\cap \set S_k^c} |\nabla z_k|^p\dd x\,,
 \quad\text{for all }\,k\in\N\,.
 \end{equation}
 Since $(|\nabla u_k|^p)$ is equi-integrable, the first term on the right-hand side can be made 
 arbitrary small for $|A|$ small enough. 
 For the second term, using (\ref{app.eq:skc}), we estimate
 \begin{equation*}
 \int_{\set S_k^c} |\nabla z_k|^p\dd x \leq \|\nabla z_k\|_{L^\infty}^p|\set S_k^c| 
 \leq C\int_{\{|u_k| + |\nabla u_k| > s_k/2\}} (|u_k| + |\nabla u_k|)^p\dd x\,,
 \quad\text{for all }\,k\in\N\,,
 \end{equation*}
 and conclude, as above, that $\lim_{k\to\infty}\int_{\set S_k^c} |\nabla z_k|^p\dd x = 0$.
 Hence, we proved that for every $\eps > 0$ there exists $\delta > 0$ and $k_0\in\N$, such that
 for all open subsets $A\subset\R^d$ with $|A|\leq \delta$ and for all $k\geq k_0$ it holds
 \begin{equation*}
 \int_A|\nabla z_k|^p\dd x \leq \eps\,,
 \end{equation*}
which by definition means the equi-integrability of the sequence $(|\nabla z_k|^p)$.

{\em Step 2.} Let $\Omega$ be an arbitrary open, bounded set. For a given bounded sequence 
$(u_k)\subset W^{1,p}(\Omega,\R^m)$, there exists a subsequence such that
\begin{equation*}
u_k\rightharpoonup u\quad\text{in }W^{1,p}(\Omega,\R^m)\,,\quad u_k\to u\quad\text{in }L^p_{loc}(\Omega,\R^m)\,.
\end{equation*}
Let $(\Omega_l)$ be an increasing sequence of compactly contained subdomains of $\Omega$ satisfying
$|\Omega\backslash\Omega_l|\to0$ as $l\to\infty$, and let $(\zeta_l)\subset C_0^\infty(\Omega,[0,1])$ be a sequence
of cut-off functions such that $\zeta_l(x) = 1$ for  $x\in\Omega_l$. Define $\tilde{u}_k:= u_k-u$, and observe that
\begin{equation*}
\limsup_{l\to\infty}\limsup_{k\to\infty}\|\zeta_l\tilde u_k\|_{L^p} = 0\,
\end{equation*}
and
\begin{align*}
\limsup_{l\to\infty}\limsup_{k\to\infty}\|\nabla(\zeta_l\tilde u_k)\|_{L^p} &= 
\limsup_{l\to\infty}\limsup_{k\to\infty}\|\nabla\zeta_l\otimes\tilde u_k + \zeta_l\nabla\tilde u_k\|_{L^p}\\
&\leq \limsup_{k\to\infty}\|\nabla\tilde u_k\|_{L^p} < \infty\,.
\end{align*}
Then, a standard diagonalization procedure applies (cf.~\cite[Lemma 1.15]{Att84}) and provides a bounded 
sequence $(\zeta_{l(k)}\tilde u_k)\subset W_0^{1,p}(\Omega,\R^m)$, which can be extended by zero to $\R^d$.
Since, $(|\nabla(\zeta_{l(k)}\tilde u_k)|^p)$ is equi-integrable, applying the arguments of Step 1, there
exists a sequence $(\tilde z_k)\subset W^{1,p}(\Omega,\R^m)$ satisfying: 
$|\tilde z_k \neq \zeta_{l(k)}\tilde u_k| \to 0$ 
as $k\to+\infty$, $(|\nabla \tilde z_k|^p)$ is equi-integrable and 
$\|\tilde z_k\|_{W^{1,\infty}} \leq C s_k$ for some $C>0$. 
Since, 
$|\tilde z_k + u \neq u_k| \leq |\tilde z_k \neq \zeta_{l(k)}\tilde u_k| + |\Omega\backslash\Omega_{l(k)}|\to 0$, 
$(|\nabla(\tilde z_k + u)|^p)$ is equi-integrable, 
and $\|\tilde z_k + u\|_{W^{1,\infty}} \leq C s_k$ for some $C>0$, we identify $z_k=\tilde z_k + u$ as the
sought sequence.
\end{proof}

\begin{remark}
If we assume in the previous lemma that $\Omega$ is a Lipschitz domain, as it is the case in our model
of the rod, where $\Omega = (0,L)\times\omega$ and $\omega$ is Lipschitz, then $\Omega$ is also an extension domain
and according to the arguments in Step 1, we can replace the whole sequence $(u_k)$ by its Lipschitz counterpart.
\end{remark}

The following corollary provides the analogous statement to the previous lemma, but with the gradients replaced by
the scaled gradients. A general idea for proving such results can be found in \cite{BrZe07} 
(cf.~also \cite[proof of Lemma 2.17]{MaVe15}), therefore, we omit the proof here.
\begin{corollary}\label{cor:fmp2}
Let $p>1$, $\Omega\subset\R^d$ open, bounded set, 
$(h_k)$ monotonically decreasing zero sequence of positive numbers,
 and $(u_k)\subset W^{1,p}(\Omega,\R^m)$ a bounded sequence 
such that $(\nabla_{h_k}u_k)$ is bounded in $L^{p}(\Omega,\R^m)$ and $(|\nabla_{h_k} u_k|^p)$ is equi-integrable. 
Let $(s_k)_k$ be an increasing sequence of positive reals 
such that $s_k\to+\infty$ for
$k\to+\infty$. Then, there exists a subsequence still denoted by $(u_k)$ and 
a sequence $(z_k)\subset W^{1,\infty}(\Omega,\R^m)$ satisfying: 
$|z_k \neq u_k| \to 0$ as $k\to+\infty$, 
$(|\nabla_{h_k} z_k|^p)$ is equi-integrable, $\|z_k\|_{W^{1,\infty}} \leq C s_k$ 
and $\|\nabla_{h_k}z_k\|_{L^\infty}\leq Cs_k$ for some 
$C>0$ depending only on dimension $d$.
\end{corollary}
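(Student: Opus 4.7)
The strategy is to reduce the claim to Lemma \ref{lema:fmp1} via an anisotropic change of variables that converts the scaled gradient $\nabla_{h_k}$ into an ordinary gradient. Define $S_h(x)=(x_1,hx')$, so that $\Omega^h:=S_h(\Omega)$ is an anisotropically shrunken copy of $\Omega$, and set $\tilde u_k(\xi):=u_k(S_{h_k}^{-1}\xi)$ on $\Omega^{h_k}$. A direct calculation gives the key identity $\nabla\tilde u_k(\xi)=(\nabla_{h_k}u_k)(S_{h_k}^{-1}\xi)$, which transports the $L^p$-boundedness and the equi-integrability of $(|\nabla_{h_k}u_k|^p)$ on $\Omega$ into the analogous statements for $(|\nabla\tilde u_k|^p)$ on $\Omega^{h_k}$, up to the Jacobian factor $h_k^{d-1}$ produced by the rescaling of the Lebesgue measure in the cross-sectional directions.

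\textbf{Main argument.} I would then apply Step 1 of Lemma \ref{lema:fmp1} to each $\tilde u_k$ on $\Omega^{h_k}$ with the same threshold sequence $(s_k)$, obtaining Lipschitz truncations $\tilde z_k\in W^{1,\infty}(\Omega^{h_k})$ with $\|\tilde z_k\|_{W^{1,\infty}(\Omega^{h_k})}\le C s_k$, with $(|\nabla\tilde z_k|^p)$ equi-integrable in the rescaled sense, and with $|\{\tilde u_k\neq\tilde z_k\}|_{\Omega^{h_k}}=o(h_k^{d-1})$ (this last bound comes from the quantitative estimate (A.3) combined with the rescaled equi-integrability of $|\nabla\tilde u_k|^p$). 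Pulling back, the sequence $z_k(x):=\tilde z_k(S_{h_k}x)$ on $\Omega$ then satisfies: $\|\nabla_{h_k}z_k\|_{L^\infty(\Omega)}=\|\nabla\tilde z_k\|_{L^\infty(\Omega^{h_k})}\le C s_k$; $\|z_k\|_{W^{1,\infty}(\Omega)}\le C s_k$ because $|\partial_i z_k|=h_k|\partial_{\xi_i}\tilde z_k|\le|\partial_{\xi_i}\tilde z_k|$ for $i=2,\ldots,d$ and $h_k\le 1$; $|\{u_k\neq z_k\}|_\Omega=h_k^{-(d-1)}|\{\tilde u_k\neq\tilde z_k\}|_{\Omega^{h_k}}\to 0$; and $(|\nabla_{h_k}z_k|^p)$ is equi-integrable on $\Omega$, since $\int_A|\nabla_{h_k}z_k|^p\,dx=h_k^{-(d-1)}\int_{S_{h_k}(A)}|\nabla\tilde z_k|^p\,d\xi$ and the rescaled equi-integrability of $(|\nabla\tilde z_k|^p)$ is precisely the property compatible with this $h_k^{d-1}$ factor.

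\textbf{Main obstacle.} The delicate point is that all constants inherited from Lemma \ref{lema:fmp1} must be chosen uniformly in $h_k$ on the shrinking family of domains $\Omega^{h_k}$. This reduces to (i) the $L^p$-boundedness of the Hardy--Littlewood maximal operator with a purely dimensional constant, which is classical and automatically uniform, and (ii) the existence of extension operators $W^{1,p}(\Omega^{h_k})\to W^{1,p}(\R^d)$ whose norms do not blow up as $h_k\downarrow 0$. Item (ii) is the point requiring care: a uniform extension is built by first extending the Lipschitz cross-section $\omega$ across its boundary (via reflection or a standard local-chart partition of unity), periodically filling the cross-sectional directions, and finally truncating in $x_1$; the anisotropic scaling $S_{h_k}$ affects only the cross-sectional Lipschitz data, whose constants are preserved uniformly. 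Once (i) and (ii) are in place, the measure estimate (A.3) transfers verbatim to $\Omega^{h_k}$ and the equi-integrability argument of Step 1 goes through without change, while the reduction to arbitrary bounded open $\Omega$ is handled by the diagonalization of Step 2 of Lemma \ref{lema:fmp1}. An alternative route, used in \cite{BrZe07}, bypasses the rescaling entirely by working directly on $\Omega$ with the anisotropic maximal operator $M^h(u)(x)=\sup_{r>0}|B_h(x,r)|^{-1}\int_{B_h(x,r)}|u(y)|\,dy$ associated to the ellipsoids $B_h(x,r)=\{y:(y_1-x_1)^2+h^{-2}|y'-x'|^2<r^2\}$, whose doubling constant is independent of $h$; this yields the same estimates and the same Lipschitz truncation via \cite[Thm.~3, Sec.~6]{EG92}, now applied in its anisotropic form.
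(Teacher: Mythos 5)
The paper gives no proof of this corollary: it records the statement, refers to \cite{BrZe07} (cf.~also \cite[Lemma 2.17]{MaVe15}), and omits the argument. So the benchmark is the anisotropic Lipschitz-truncation technique of those references --- which is precisely the route you mention only in your closing sentence, not the one you develop. Your leading strategy, the anisotropic rescaling $S_h(x)=(x_1,hx')$ followed by Lemma~\ref{lema:fmp1} on $\Omega^{h_k}=S_{h_k}(\Omega)$ and pullback, is a genuinely different reduction and has a gap at the level of generality of the statement. Your uniform-extension construction presupposes $\Omega=(0,L)\times\omega$ with Lipschitz $\omega$, while the corollary is stated for an arbitrary bounded open $\Omega\subset\R^d$; and your appeal to ``the diagonalization of Step~2 of Lemma~\ref{lema:fmp1}'' for general $\Omega$ does not transfer to the scaled setting, because multiplying $u_k-u$ by a fixed cut-off $\zeta_l\in C_c^\infty(\Omega)$ creates the cross term $\nabla_{h_k}\zeta_l\otimes(u_k-u)$ whose entries $h_k^{-1}\partial_i\zeta_l\,(u_k-u)$, $i\ge 2$, are out of control: there is no reason for $h_k^{-1}\|u_k-u\|_{L^p(\operatorname{supp}\nabla\zeta_l)}$ to stay bounded. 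Even for the cylinder domain the uniform-in-$h_k$ extension across $\partial(h_k\omega)$ together with the truncation in $x_1$ needs to be written out to check that no inverse power of $h_k$ enters the constant.

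The argument you relegate to your last sentence is the one that actually closes: run Steps~1--2 of Lemma~\ref{lema:fmp1} verbatim on the fixed domain $\Omega$, replacing Euclidean balls and the Hardy--Littlewood maximal operator by their anisotropic counterparts adapted to $\nabla_{h_k}$. The covering lemma and the maximal-function bound then carry constants depending only on $d$, since the anisotropic balls form a doubling family with doubling constant $2^d$ independent of $h_k$; the anisotropic Lipschitz estimate coming from \cite[Lemma 4.1]{FMP98} (or \cite[Thm.~3, Sec.~6]{EG92}) yields both $\|\nabla z_k\|_{L^\infty}\le Cs_k$ and $\|\nabla_{h_k}z_k\|_{L^\infty}\le Cs_k$ simultaneously; and the measure estimate \eqref{app.eq:skc}, the equi-integrability transfer, and the general-$\Omega$ diagonalization of Step~2 all go through unchanged because the domain is never rescaled. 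I would promote that route to the main proof and either drop the rescaling version or restrict it explicitly to extension domains with a spelled-out uniform-extension construction.
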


\section*{Acknowledgment}
This work has been supported by the Croatian Science
Foundation under Grant agreement No.~9477 (MAMPITCoStruFl).


\end{document}